\documentclass[11pt]{amsart}
\usepackage{amsmath,amssymb,amsthm,amsxtra,graphics,enumerate} 
\usepackage{stmaryrd}
\usepackage{times}
\usepackage[margin=1.3in]{geometry}
\usepackage[linktocpage=true, hidelinks, colorlinks=false, allcolors=blue]{hyperref}
\usepackage{comment}
\usepackage{scalerel}
\usepackage{mathrsfs}

\newtheorem{thm}{Theorem}[section]

\newtheorem{cor}[thm]{Corollary}

\newtheorem{lemma}[thm]{Lemma}
\newtheorem{prop}[thm]{Proposition}

\theoremstyle{definition}
\newtheorem{defn}[thm]{Definition}

\newcommand\Acal{\mathcal{A}}

\newcommand\Ccal{\mathcal{C}}
\newcommand\Dcal{\mathcal{D}}
\newcommand\Ecal{\mathcal{E}}
\newcommand\Fcal{\mathcal{F}}
\newcommand\Gcal{\mathcal{G}}

\newcommand\Ical{\mathcal{I}}

\newcommand\Mcal{\mathcal{M}}

\newcommand{\Ycal}{\mathcal{Y}}

\newcommand{\B}{\mathbb{B}}
\newcommand{\C}{\mathbb{C}}

\newcommand{\p}{\mathbb{P}}
\newcommand{\q}{\mathbb{Q}}

\newcommand{\ot}{\mathrm{ot}}
\newcommand{\cf}{\mathrm{cf}}

\newcommand{\dom}{\mathrm{dom}}
\newcommand{\ran}{\mathrm{ran}}

\newcommand{\res}{\upharpoonright}

\begin{document}

\title[Combinatorial Properties Related to the Higher Baumgartner's Axiom]{Combinatorial Properties Related to the \\ Higher Baumgartner's Axiom}

\author{John Krueger}

\address{John Krueger, Department of Mathematics, 
	University of North Texas, Denton, TX, USA}
\email{john.krueger@unt.edu}

\subjclass{Primary 03E05, 03E35; Secondary 03E40, 03E50}

\keywords{Higher Baumgartner's axiom, $\omega_2$-dense set of reals, 
iterated forcing, side conditions}

\date{March 21, 2026}

\begin{abstract}
	We isolate two combinatorial properties, each expressible by a $\Pi_2$-sentence 
	over the structure $(H(\omega_3),\in,\omega_1,\omega_2,\text{NS}_{\omega_2})$, 
	such that each property is consistent with \textsf{CH}, 
	and their conjunction together with $2^\omega \le \omega_2$ and 
	$2^{\omega_1} = 2^{\omega_2} = \omega_3$ implies the 
	existence of a c.c.c.\ forcing which forces the higher Baumgartner's axiom.
\end{abstract}

\maketitle

\section{Introduction} \label{Introduction}

For any uncountable cardinal $\kappa$, a set of reals $X$ is \emph{$\kappa$-dense} 
if for all $x < y$ in $X$, there exist exactly $\kappa$-many members of $X$ 
between $x$ and $y$. 
\emph{Baumgartner's axiom for $\kappa$}, or $\textsf{BA}_{\kappa}$, is the property 
that any two $\kappa$-dense sets of reals without endpoints are isomorphic. 
The statement $\textsf{BA}_{\omega_1}$, which is the only case of the above property 
which is currently known 
to be consistent, is known as \emph{Baumgartner's axiom}. 
Baumgartner \cite{baumgartner} proved that assuming \textsf{CH}, 
any two $\omega_1$-dense sets of reals 
without endpoints can be made isomorphic by a c.c.c.\ forcing of size $\omega_1$. 
Thus, starting with a model of \textsf{GCH}, there exists a finite support forcing 
iteration of c.c.c.\ forcings of length $\omega_2$ which forces that $2^\omega = \omega_2$ 
and $\textsf{BA}_{\omega_1}$ holds.

Baumgartner \cite{baumgartner2} asked whether it is consistent that any two 
$\omega_2$-dense sets of reals without endpoints are isomorphic, a statement which we 
refer to as the \emph{higher Baumgartner's axiom}. 
Despite the interest in this problem by Baumgartner and others (e.g.\ \cite{ARS}), 
no progress was made on it for more than three decades. 
By a result of Sierpi\'{n}ski, 
\textsf{ZFC} implies that there are at least $(2^\omega)^+$-many distinct 
order types of sets of reals of size $2^\omega$ 
(\cite{sierpinski}; also see \cite[Theorem 2.4(a)]{baumgartner2}). 
On the other hand, Abraham, Rubin, and Shelah \cite{ARS} proved the consistency of 
$\textsf{BA}_{\omega_1}$ together with $2^\omega > \omega_2$. 
Hence, a natural generalization of Baumgartner's axiom to higher cardinals is the 
statement that $2^\omega > \omega_2$ and for any uncountable cardinal $\kappa < 2^\omega$, 
$\textsf{BA}_{\kappa}$ holds.

Recently, Moore and Todor\v{c}evi\'{c} \cite{MT} made a potential breakthrough on 
Baumgartner's question. 
They introduced a combinatorial property of $\omega_2$ denoted by $(**)$ which, 
together with $2^\omega \le \omega_2$ and $2^{\omega_1} = 2^{\omega_2} = \omega_3$, 
implies the existence of a c.c.c.\ forcing which 
forces $2^\omega  = \omega_3$, $\textsf{MA}_{\omega_2}$, $\textsf{BA}_{\omega_1}$, 
and $\textsf{BA}_{\omega_2}$. 
The general idea of their proof is to bootstrap a known c.c.c.\ forcing for adding an 
increasing function between two sets of reals of size $\omega_1$ 
to sets of reals of size $\omega_2$ 
by way of a sequence of bijections between $\omega_1$ and uncountable ordinals 
less than $\omega_2$ (\cite[Theorem 4.2]{partitionproblems}). 
They showed that assuming $(**)$ and $2^\omega \le \omega_2$, 
there is a forcing which adds an 
increasing function from a set of reals of size $\omega_2$ into another which can be 
written as an increasing union with length $\omega_2$ of c.c.c.\ suborders, 
and hence is itself c.c.c. 
Around the time that these results were announced, there were indications that a proof 
of the consistency of $(**)$ from large cardinals would be forthcoming (see 
\cite[Section 5]{MT}), but more than a decade later the status of the consistency 
of $(**)$ is still unresolved.

In this article, we isolate two statements $(A)$ and $(B)$, each consistent with \textsf{CH}, 
whose conjunction is equivalent to $(**)$. 
Both statements can be expressed by $\Pi_2$ sentences in the language of 
the structure $(H(\omega_3),\in,\omega_1,\omega_2,\text{NS}_{\omega_2})$. 
Statement $(A)$ is the assertion that any family of $\omega_2$-many club subsets 
of $\omega_1$ can be diagonalized. 
Statement $(B)$ is a formal weakening of $(**)$. 
The consistency of $(A)$ is well-known and can be obtained by a countable support forcing 
iteration of the classical fast club forcing of Jensen. 
Our proof of the consistency of $(B)$ uses non-traditional techniques, 
and involves a countable support forcing iteration with uncountable models as side 
conditions.\footnote{The idea of using models as side conditions in a forcing iteration 
is due to Neeman and was originally used to construct a model 
of \textsf{PFA} with a finite support iteration (\cite{twotype}).}

A key feature of our approach is to fix in advance a nice family of models to 
use as side conditions, where the relevant relationship between the models is a 
feature of the family. 
In contrast, many side condition forcings include arbitrary 
elementary substructures as side conditions 
with additional restrictions on how models appearing in a condition interact 
(e.g.\ \cite{mitchell}). 
This ``off-the-shelf'' approach to side conditions 
provides some simplifications, 
since it trivializes the parts of amalgamation arguments 
which purely involve models. 
We also used this idea in \cite{jk31} 
in our construction of a model of \textsf{CH} 
in which any two countably closed 
$\omega_2$-Aronszajn trees are club isomorphic (\cite{jk31}). 
A major difference between \cite{jk31} and this article is that here 
we define the family of models using 
$\Box_{\omega_2}$ rather than from a large cardinal assumption. 
As a result, we prove the consistency of $(B)$ assuming only the consistency of \textsf{ZFC}.

\section{Combinatorial Properties} 
\label{Combinatorial Properties}

We begin by reviewing the property $(**)$ and proving that it is equivalent to the 
conjunction of the statements $(A)$ and $(B)$ introduced below.

\begin{defn}[Moore and Todor\v{c}evi\'{c} (\cite{MT})] \label{original **}
	Define $(**)$ to be the statement that for any family $\Fcal \subseteq {}^{\omega_2} \omega_2$ 
	of injective functions with $|\Fcal| \le \omega_2$, 
	there exists an injective 
	function $h : \omega_2 \to \omega_2$ such that for any $f \in \Fcal$, 
	there exists a countable set $D \subseteq \omega_2$ such that:
	\begin{itemize}
		\item for all $\alpha \in \omega_2 \setminus D$, 
		$f(\alpha) \ne h(\alpha)$;
		\item for all distinct $\alpha, \beta \in \omega_2 \setminus D$, 
		$f(h(\alpha)) \ne h(\beta)$.
	\end{itemize}
\end{defn}

\begin{thm}[Moore and Todor\v{c}evi\'{c} (\cite{MT})]
	Assume that $2^\omega \le \omega_2$, $2^{\omega_2} = \omega_3$, and $(**)$ holds. 
	Then there exists a c.c.c.\ forcing which forces $\textsf{MA}_{\omega_1}$, 
	$\textsf{BA}_{\omega_1}$, and $\textsf{BA}_{\omega_2}$.
\end{thm}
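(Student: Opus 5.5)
The plan follows the Moore--Todor\v{c}evi\'{c} bootstrapping strategy. We build a finite support iteration $\langle \p_\xi, \dot\q_\xi : \xi < \omega_3\rangle$ of c.c.c.\ posets, each of size at most $\omega_2$. The book-keeping uses $2^{\omega_2} = \omega_3$ to list all $\p_\xi$-names for pairs of $\omega_2$-dense sets of reals, and all names for c.c.c.\ posets of size $\omega_1$. With cofinally many repetitions, every such object appearing in the final model is handled at some stage $\xi<\omega_3$. The $\textsf{MA}_{\omega_1}$ stages are routine. The c.c.c.\ of the iteration follows from finite support, so the real content is a one-step lemma. If $X,Y$ are $\omega_2$-dense sets of reals without endpoints, $2^\omega \le \omega_2$, and $(**)$ holds, then there is a c.c.c.\ poset of size $\omega_2$ adding an isomorphism between $X$ and $Y$. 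Since $\textsf{BA}_{\omega_1}$ is handled by Baumgartner's/Todor\v{c}evi\'{c}'s c.c.c.\ forcing, it just becomes another stage.

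A complication is that $(**)$ and $2^\omega\le\omega_2$ must hold at every intermediate stage, whereas in the final model $2^\omega=\omega_3$. I would avoid preserving $(**)$. Instead, in the ground model I would apply $(**)$ once to a family of $\omega_2$ injections coding everything relevant, obtaining a single $h$. The key is that $h$ lives in the ground model and still diagonalizes any $\omega_2$ many functions in the extension. To arrange this, I would note that every injection $f:\omega_2\to\omega_2$ in a c.c.c.\ extension is covered by a ground-model function $F:\omega_2\to[\omega_2]^{\le\omega}$. Using $2^{\omega_2}=\omega_3$, I would then enumerate names in length $\omega_3$ and, at each stage, apply $(**)$ in $V$ to $\omega_2$ many injections $\omega_2\to\omega_2$ obtained by splitting each covering $F$ into countably many injections. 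Each intermediate stage has cardinality $\omega_2$, hence only $\omega_2$ many relevant names, so $(**)$ can be invoked in the ground model at each stage. The continuum of the intermediate model is irrelevant: only $|X|=|Y|=\omega_2$ matters.

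For the one-step lemma, I would write $X=\bigcup_{\delta<\omega_2}X_\delta$ and $Y=\bigcup_\delta Y_\delta$ as continuous increasing unions of $\omega_1$-sized pieces. I would fix bijections $e_\delta:\omega_1\to\delta$ and use $h$ to choose, for each $\alpha$, a target interval in $Y$ for the $\alpha$-th element of $X$. Rational intervals number only $\omega$, and density lets us pick $\omega_2$ candidates. Conditions are finite order-preserving partial maps $p$ from $X$ to $Y$ respecting these $h$-prescribed restrictions. This defines $\p=\bigcup_{\delta<\omega_2}\p_\delta$, with $\p_\delta$ consisting of conditions supported on the level-$\delta$ pieces. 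I would show that each $\p_\delta$ is c.c.c., using the $\omega_1$ case (Todor\v{c}evi\'{c}'s Theorem 4.2 in partition problems) transported along $e_\delta$, and that $\p_\delta$ is a regular suborder of $\p$. Then $\p$ is c.c.c., since for $\cf(\omega_2)>\omega_1$ an antichain of size $\omega_1$ lies in some $\p_\delta$. Genericity makes the union a total order-isomorphism because $X,Y$ are $\omega_2$-dense.

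The main obstacle is the regularity (complete embedding) $\p_\delta \lessdot \p$, i.e.\ that any condition can be reflected to level $\delta$ compatibly. This is precisely where the two clauses of $(**)$ enter: $f(\alpha)\ne h(\alpha)$ and $f(h(\alpha))\ne h(\beta)$ off a countable set. Here $f$ ranges over the functions describing how points from outside level $\delta$ collapse into level $\delta$ via the $e_\gamma$ coherences. Outside a countable set $D$, the prescribed constraints never force a collision between a new point and one of its reflections, and $D$ is absorbed into a countable elementary submodel. I would check this amalgamation carefully, since it carries all the weight.
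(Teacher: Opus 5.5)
\textbf{There is a genuine gap: you have placed $(**)$ in the wrong step of the one-step lemma.} The paper does not reprove this theorem; it only summarizes Moore and Todor\v{c}evi\'{c}'s strategy. Your outer architecture agrees with that summary: a finite support iteration with bookkeeping, and a one-step poset of size $\omega_2$ written as an increasing union $\bigcup_{\delta<\omega_2}\p_\delta$.

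You treat as routine the claim that each $\p_\delta$ is c.c.c.\ ``by transporting the $\omega_1$ case along $e_\delta$''. That is exactly where $(**)$ is needed. The $\omega_1$-level result does not hold for an arbitrary bijection. A poset of finite increasing maps contained in a fixed relation need not be c.c.c.: if the relation contains an uncountable order-reversing set of pairs, the singletons already form an uncountable antichain. The c.c.c.\ argument needs the level-$\delta$ map, transported to $\omega_1$, to be generic with respect to a fast function (or club of countable models) attached to that level. Off a countable set, $\alpha$ must be separated from $h(\alpha)$, and distinct $h(\alpha), h(\gamma)$ from each other.

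This is precisely the shape of the paper's statement $(B)$, which Moore and Todor\v{c}evi\'{c} derive from $(**)$. Roughly:
\begin{itemize}
\item each level carries its own partition into countable blocks;
\item ``same block'' has countable sections, so it splits into countably many injections $f$;
\item the clauses $f(\alpha)\ne h(\alpha)$ and $f(h(\alpha))\ne h(\beta)$ then say that a single $h$ achieves this separation at all $\omega_2$ levels simultaneously.
\end{itemize}
Your proposal never formulates this requirement. It also never specifies the $h$-derived relation of allowed pairs precisely enough to check it. So it contains no argument that any $\p_\delta$ is c.c.c.

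\textbf{The step you call the main obstacle is neither needed nor true.} It is not needed: if $A\subseteq\p_\delta$ is an antichain of $\p$, it is already an antichain of $\p_\delta$, since a common extension in $\p_\delta$ would be one in $\p$. So c.c.c.\ of the pieces together with $\cf(\omega_2)>\omega_1$ gives c.c.c.\ of $\p$. This is the paper's summary: ``an increasing union with length $\omega_2$ of c.c.c.\ suborders, and hence is itself c.c.c.''

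It is not true: $\p_\delta$ is not a regular suborder in any natural version of the construction, where level-$\delta$ allowed pairs meet every product of rational intervals. Take $x_1\notin X_\delta$ and $q\in\p_\delta$ compatible with $\{(x_1,y_1)\}$. Inside the gaps of $q$ you can add a level-$\delta$ pair $(x,y)$ with $x<x_1$ and $y>y_1$. This gives an extension of $q$ in $\p_\delta$ that is incompatible with $\{(x_1,y_1)\}$. So no reduction exists, and the ``reflection'' use of $(**)$ you sketch has nothing to attach to.

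\textbf{Two smaller points.} First, your covering argument does not avoid preserving $(**)$; it is exactly the proof that $(**)$ survives c.c.c.\ forcing. Splitting into injections needs that each $\gamma$ is a possible value of $\dot f(\alpha)$ for only countably many $\alpha$ (from injectivity plus c.c.c.). Only then does the covering relation decompose into countably many partial injections. Second, $2^\omega\le\omega_2$ is used in the one-step lemma, not irrelevant. It holds at every stage $\xi<\omega_3$ because $\p_\xi$ is c.c.c.\ of size $\omega_2$, so you do not need to claim otherwise.
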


We isolate two statements (A) and (B), which jointly imply that $(**)$ holds.

\begin{defn}
	Let $(A)$ denote the statement that for any family $\Ccal$ of at most 
	$\omega_2$-many club subsets of $\omega_1$, there exists a club $E$ 
	such that for all $C \in \Ccal$, $E \setminus C$ is countable.
\end{defn}

The consistency of $(A)$ is due to Jensen. 
Namely, assuming \textsf{CH} there exists an $\omega_1$-closed, $\omega_2$-centered 
forcing which adds club subset $E$ of $\omega_1$ such that for all clubs 
$C$ in the ground model, $E \setminus C$ is countable (\cite[Chapter IX]{devlinj}). 
Starting with a model of \textsf{GCH}, this fast club forcing can be iterated with 
countable support up to $\omega_3$ to obtain a generic extension satisfying (A).

The following lemma is well-known. 
We leave the straightforward proof to the interested reader.

\begin{lemma} \label{fast club}
	(A) is equivalent to the statement that 
	for any family $\Fcal \subseteq {}^{\omega_1} \omega_1$ of 
	size at most $\omega_2$, there exists an injective 
	function $g : \omega_1 \to \omega_1$ 
	such that for all $f \in \Fcal$, there exists $i < \omega_1$ 
	such that for all $\gamma \in \omega_1 \setminus i$, 
	$f(\gamma) < g(\gamma)$.
\end{lemma}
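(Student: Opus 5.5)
We prove the two directions separately. Both rest on the standard translation between clubs in $\omega_1$ and functions on $\omega_1$: to a function $f$ we attach its club of closure points, and to a club $C$ we attach its "next element" function.

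\emph{(A) implies the function statement.} Let $\Fcal \subseteq {}^{\omega_1}\omega_1$ have size at most $\omega_2$. For each $f \in \Fcal$ let
\[ C_f = \{\delta < \omega_1 : \delta \text{ is a limit ordinal and } f[\gamma] \subseteq \delta \text{ for all } \gamma < \delta\}, \]
which is a club. By (A) there is a club $E$ with $E \setminus C_f$ countable for every $f \in \Fcal$. Define $g(\gamma) = \min(E \setminus (\gamma+1)) + \gamma$, using ordinal addition.

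\begin{itemize}
\item \emph{$g$ is injective.} If $\gamma < \gamma'$, let $e = \min(E \setminus(\gamma+1))$ and $e' = \min(E\setminus(\gamma'+1))$. Then $e \le e'$, so $g(\gamma) = e+\gamma < e+\gamma' \le e'+\gamma' = g(\gamma')$, by strict monotonicity of ordinal addition in the right argument and weak monotonicity in the left. In fact $g$ is strictly increasing.
\item \emph{$g$ eventually dominates each $f$.} Fix $f \in \Fcal$ and pick $i$ above the countable set $E \setminus C_f$. For $\gamma \ge i$, the element $\delta = \min(E\setminus(\gamma+1))$ is greater than $i$, so $\delta \in C_f$. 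Since $\gamma < \delta$, we have $f(\gamma) < \delta \le g(\gamma)$.
\end{itemize}

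\emph{The function statement implies (A).} Given $\Ccal$ of size at most $\omega_2$, for each $C \in \Ccal$ let $f_C(\gamma) = \min(C \setminus (\gamma+1))$, and apply the hypothesis to get an injective $g$. Let
\[ E = \{\delta < \omega_1 : \delta \text{ is a limit ordinal and } g[\delta] \subseteq \delta\}, \]
which is club. Fix $C \in \Ccal$ and $i$ with $f_C(\gamma) < g(\gamma)$ for all $\gamma \ge i$. Take $\delta \in E$ with $\delta > i$. For each $\gamma \in [i,\delta)$, there is an element of $C$ strictly between $\gamma$ and $f_C(\gamma)$ inclusive, and $f_C(\gamma) < g(\gamma) < \delta$. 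So $C \cap \delta$ is unbounded in $\delta$, hence $\delta \in C$ because $C$ is closed. Therefore $E \setminus C \subseteq i+1$, which is countable.

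Injectivity of $g$ is not actually needed in this direction. The only point needing care is the injectivity of $g$ in the first direction, which the $+\gamma$ adjustment handles.
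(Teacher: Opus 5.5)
Your argument is correct in both directions. The closure-point clubs $C_f$ and the next-element functions $f_C$ translate between the two formulations as you claim, and the shift $g(\gamma) = \min(E \setminus (\gamma+1)) + \gamma$ makes $g$ strictly increasing, hence injective, without losing eventual domination. The paper gives no proof here, calling the lemma well known and straightforward; yours is the standard translation argument one would expect. The only blemish is the garbled phrase ``strictly between $\gamma$ and $f_C(\gamma)$ inclusive'', where you just mean $f_C(\gamma) \in C \cap (\gamma,\delta)$.
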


Define $S^2_1  = \{ \alpha < \omega_2 : \cf(\alpha) = \omega_1 \}$.

\begin{defn} \label{definition of (B)}
	Define $(B)$ to be the statement that there exists a sequence 
	$\vec \pi = \langle \pi_\beta : \omega_1 \le \beta < \omega_2 \rangle$, 
	where each $\pi_\beta : \beta \to \omega_1$ is a bijection, such that for any 
	function $g \in {}^{\omega_1} \omega_1$, there exists
	an injective function $h : \omega_2 \to \omega_2$ 
	and there exists a stationary set $I \subseteq S^2_1$ consisting 
	of ordinals closed under $h$ 
	such that for all $\beta \in I$, there exists 
	a countable set $D_{\beta} \subseteq \beta$ such that:
	\begin{itemize}
		\item for all $\alpha \in \beta \setminus D_{\beta}$, 
		$$
		g(\min \{ \pi_\beta(\alpha), \pi_\beta(h(\alpha)) \}) < 
		\max \{ \pi_\beta(\alpha), \pi_\beta(h(\alpha)) \};
		$$
		\item for all distinct $\alpha, \gamma \in \beta \setminus D_{\beta}$, 
		$$
		g(\min \{ \pi_\beta(h(\alpha)), \pi_\beta(h(\gamma)) \}) < 
		\max( \{ \pi_\beta(h(\alpha)), \pi_\beta(h(\gamma)) \} ).
		$$
	\end{itemize}
\end{defn}

\begin{lemma}
	The conjunction of $(A)$ and $(B)$ is equivalent to $(**)$.
\end{lemma}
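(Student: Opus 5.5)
The plan is to prove the two implications separately, using Lemma~\ref{fast club} to pass between $(A)$ and eventual domination of functions on $\omega_1$.

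\textbf{From $(A)$ and $(B)$ to $(**)$.} Fix $\vec\pi$ witnessing $(B)$ and a family $\Fcal$ of at most $\omega_2$ injective functions. For each $f \in \Fcal$, fix a club $C_f \subseteq \omega_2$ of ordinals closed under $f$ and $f^{-1}$. For $\beta \in C_f$ with $\beta \ge \omega_1$, define $e_{f,\beta} : \omega_1 \to \omega_1$ by
$$e_{f,\beta}(\xi) = \sup\{\pi_\beta(f(\alpha)), \pi_\beta(f^{-1}(\alpha)) + 1 : \alpha < \beta,\ \pi_\beta(\alpha) \le \xi\} + 1,$$
where the term $f^{-1}(\alpha)$ is included only when $\alpha$ is in the range of $f$. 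There are at most $\omega_2$ such functions. By $(A)$ and Lemma~\ref{fast club}, there is a $g$ eventually dominating all of them, and we apply $(B)$ to this $g$ to get $h$ and a stationary $I$.

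Fix $f \in \Fcal$. The first set of violations is $X = \{\alpha : f(\alpha) = h(\alpha)\}$. Suppose $X$ is uncountable. Since $I$ is stationary, we can pick $\beta \in I \cap C_f$ with $X \cap \beta$ uncountable. Only countably many $\alpha < \beta$ lie in $D_\beta$ or have $\pi_\beta(\alpha)$ below the point past which $g$ dominates $e_{f,\beta}$, so we can take $\alpha \in X \cap \beta$ avoiding both. Let $\xi = \min\{\pi_\beta(\alpha), \pi_\beta(h(\alpha))\}$.
\begin{itemize}
\item If $\xi = \pi_\beta(\alpha)$, then $e_{f,\beta}(\xi) > \pi_\beta(f(\alpha)) = \pi_\beta(h(\alpha))$, which is the max.
\item If $\xi = \pi_\beta(h(\alpha)) = \pi_\beta(f(\alpha))$, then $e_{f,\beta}(\xi) > \pi_\beta(f^{-1}(h(\alpha))) = \pi_\beta(\alpha)$, which is again the max.
\end{itemize}
In both cases $g(\xi) > e_{f,\beta}(\xi) >$ max, contradicting the first clause of $(B)$.

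For the second clause, consider the graph of bad pairs $\{\alpha,\gamma\}$ with $\alpha \ne \gamma$ and $f(h(\alpha)) = h(\gamma)$. Since $h$ and $f$ are injective, each vertex has degree at most $2$. If no countable set $D$ meets every edge, then a maximal matching is uncountable, since the vertex set of a maximal matching is a cover. Choosing $\beta \in I \cap C_f$ that contains uncountably many matching edges, and running the same computation with $h(\alpha)$ and $h(\gamma)$ in place of $\alpha$ and $h(\alpha)$, contradicts the second clause of $(B)$. Taking $D$ to be $X$ together with a countable cover of the bad pairs gives $(**)$.

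\textbf{From $(**)$ to $(B)$.} Take $\vec\pi$ arbitrary and $g$ given. Assume without loss of generality that $g$ is increasing with $g(\xi) > \xi$. For each $\beta$, the relation
$$R_\beta = \{(\alpha,\alpha') : \alpha,\alpha' < \beta,\ \pi_\beta(\alpha) < \pi_\beta(\alpha') \le g(\pi_\beta(\alpha))\}$$
has countable fibres in both directions. Hence its symmetrization is a union of countably many injective partial functions on $\beta$, and each of these extends to an injective total function on $\omega_2$. Collect these for all $\beta$, together with the identity, into a family of size $\omega_2$, and let $h$ be given by $(**)$ for this family. Put $I = S^2_1 \cap \{\beta : \beta \text{ closed under } h\}$, which is stationary since the closure points form a club. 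For $\beta \in I$, let $D_\beta$ be the union of $\beta$ with the countably many exceptional sets arising from the functions coding $R_\beta$. The two clauses of $(**)$ then say exactly that $(\alpha, h(\alpha))$ and $(h(\alpha), h(\gamma))$ avoid $R_\beta$ in either orientation, which is the two clauses of $(B)$.

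\textbf{From $(**)$ to $(A)$: the expected obstacle.} By Lemma~\ref{fast club}, it suffices to dominate a family $\Fcal$ of at most $\omega_2$ functions, which we may assume are increasing. First I would apply the $(B)$-style coding simultaneously to all $f \in \Fcal$ and all $\beta$, obtaining a single $h$. Then I would fix one $\beta \in S^2_1$ closed under $h$ and set
$$g'(\xi) = \max\{\xi, \pi_\beta(h(\pi_\beta^{-1}(\xi)))\},$$
and let $g$ be the increasing (hence injective) function $g(\xi) = \sup_{\xi' \le \xi}(g'(\xi') + 1) + \xi$.

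The difficulty is the case $\pi_\beta(h(\alpha)) < \pi_\beta(\alpha)$: there the first clause only bounds $f$ at the smaller point $\pi_\beta(h(\alpha))$, not at $\pi_\beta(\alpha)$. My first attempt would use injectivity of $\alpha \mapsto \pi_\beta(h(\alpha))$ together with the second clause. Among the values $\pi_\beta(h(\alpha))$ the clause gives mutual $f$-separation, which should force the set of $\xi$ at which the map moves downward to be sparse enough that monotonicity of $g$ recovers domination on a tail. If that fails, the fallback is to include in the family the inverse codings, so that downward moves are also excluded outside a countable set.
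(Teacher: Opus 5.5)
\textbf{Gap: the direction $(**)\Rightarrow(A)$ is not proved.} You stop at the ``expected obstacle'' and offer two untested fixes. The paper does not prove $(**)\Rightarrow(A)\wedge(B)$ itself either; it cites Propositions 2.2 and 2.3 of Moore and Todor\v{c}evi\'{c}. Still, as written, your proof of the equivalence is missing this half.

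The route you sketch is also aimed at the wrong target. Write $\phi=\pi_\beta\circ h\circ\pi_\beta^{-1}$. When $\phi(\xi)<\xi$, the instance of the first clause at $\xi$ only bounds $f(\phi(\xi))$, not $f(\xi)$. The downward moves also cannot be coded away. Your symmetrized coding already contains the inverse codings. The relation $\pi_\beta(\alpha')<\pi_\beta(\alpha)$ has uncountable fibres, so it is not a countable union of partial injections. Finally, your $g$ is built from $\sup_{\xi'\le\xi}\phi(\xi')$, that is, from images of points $\le\xi$. You give no reason these should exceed $f(\xi)$ at points where $\phi$ sends most of $\xi+1$ below $\xi$.

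\textbf{The missing idea.} Discard the first clause and use only the second. The second clause says that $\ran(\phi)$ is $f$-separated off a countable set. Domination then comes from \emph{consecutive points of the range}, not from values of $\phi$ near $\xi$. Concretely:
\begin{enumerate}
\item Replace each $f\in\Fcal$ by the strictly increasing function $\tilde f(\xi)=\sup_{\eta\le\xi}(f(\eta)+1)+\xi$.
\item Run your $(**)\Rightarrow(B)$ coding with $\tilde f$ in the role of $g$, for all $f\in\Fcal$ and all $\beta$ simultaneously. Take $h$ from $(**)$ and fix $\beta\in[\omega_1,\omega_2)$ closed under $h$.
\item For each $f$, the second clause gives a countable $D_f\subseteq\beta$ such that $\tilde f(a)<b$ whenever $a<b$ both lie in $Y_f=\phi[\pi_\beta[\beta\setminus D_f]]$.
\item Since $\phi$ is injective, $\ran(\phi)$ is unbounded in $\omega_1$. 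Also $\ran(\phi)\setminus Y_f=\phi[\pi_\beta[D_f]]$ is countable, hence contained in some $\zeta_f<\omega_1$.
\item Let $a_\xi<b_\xi$ be the two least elements of $\ran(\phi)\setminus\xi$, and set $g(\xi)=b_\xi+\xi$. Since $\xi\mapsto b_\xi$ is nondecreasing, $g$ is strictly increasing, hence injective.
\end{enumerate}
For $\xi\ge\zeta_f$ we have $a_\xi,b_\xi\in Y_f$, so
\[
f(\xi)<\tilde f(\xi)\le\tilde f(a_\xi)<b_\xi\le g(\xi).
\]
Since $g$ does not depend on $f$, Lemma~\ref{fast club} yields $(A)$.

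\textbf{Two small repairs elsewhere.}
\begin{enumerate}
\item In $(A)\wedge(B)\Rightarrow(**)$, your second case has $\xi=\pi_\beta(h(\alpha))$. So you must also discard those $\alpha$ with $\pi_\beta(h(\alpha))$ below the domination threshold; there are countably many, by injectivity of $h$. The same applies to $h(\alpha)$ and $h(\gamma)$ in the matching argument. The paper imposes exactly these conditions.
\item In $(**)\Rightarrow(B)$, $D_\beta$ should be $\beta$ \emph{intersected} with the union of the exceptional sets, not its union with $\beta$.
\end{enumerate}
With these fixes, your $(A)\wedge(B)\Rightarrow(**)$ is the paper's argument. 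You use one monotone $e_{f,\beta}$ in place of $g_{f,\beta,0}$ and $g_{f,\beta,1}$, and a cleaner maximal-matching argument where the paper just says it is easy to build $Y$. Your $(**)\Rightarrow(B)$ is a correct self-contained replacement for the citation.
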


\begin{proof}
	For the forward direction, see Propositions 2.2 and 2.3 of \cite{MT}. 
	For the reverse direction, 
	let	$\Fcal \subseteq {}^{\omega_2} \omega_2$ be a family 
	of injective functions with $|\Fcal| \le \omega_2$. 
	For each $f \in \Fcal$ and for each $\omega_1 \le \beta < \omega_2$ 
	such that $\beta$ is closed under $f$ and $f^{-1}$, 
	define $g_{f,\beta,0}$ and $g_{f,\beta,1}$ in ${}^{\omega_1} \omega_1$ as follows. 
	For each $i < \omega_1$, define:
	\begin{itemize}
		\item $g_{f,\beta,0}(i) = \pi_\beta(f(\pi_{\beta}^{-1}(i)))$;
		\item $g_{f,\beta,1}(i) = \pi_\beta(f^{-1}(\pi_\beta^{-1}(i)))$ provided 
		that $\pi_\beta^{-1}(i) \in \ran(f)$, and otherwise $g_{f,\beta,1}(i) = 0$.
	\end{itemize}
	Now let $\Gcal$ be the set of all functions of the form 
	$g_{f,\beta,j}$, where $f \in \Fcal$, 
	$\omega_1 \le \beta < \omega_2$, $\beta$ is closed under 
	$f$ and $f^{-1}$, and $j < 2$. 
	Then $\Gcal \subseteq {}^{\omega_1} \omega_1$ and $|\Gcal| \le \omega_2$.
	
	Applying $(A)$ and Lemma \ref{fast club}, 
	fix an injective function $g \in {}^{\omega_1} \omega_1$ such that for all 
	$k \in \Gcal$, there exists $i(k) < \omega_1$ such that for all 
	$\gamma \in \omega_1 \setminus i(k)$, $k(\gamma) < g(\gamma)$. 
	Now apply (B) to 
	fix an injective function 
	$h : \omega_2 \to \omega_2$ and a stationary set $I \subseteq \omega_2$ 
	consisting of ordinals closed under $h$ 
	such that for all $\beta \in I$, there exists 
	a countable set $D_{\beta} \subseteq \beta$ such that:
	\begin{itemize}
		\item for all $\alpha \in \beta \setminus D_{\beta}$, 
		$$
		g(\min \{ \pi_\beta(\alpha), \pi_\beta(h(\alpha)) \}) < 
		\max \{ \pi_\beta(\alpha), \pi_\beta(h(\alpha)) \};
		$$
		\item for all distinct $\alpha, \gamma \in \beta \setminus D_{\beta}$, 
		$$
		g(\min \{ \pi_\beta(h(\alpha)), \pi_\beta(h(\gamma)) \}) < 
		\max( \{ \pi_\beta(h(\alpha)), \pi_\beta(h(\gamma)) \} ).
		$$
	\end{itemize}

	Consider $f \in \Fcal$, and we verify the two required properties 
	of Definition \ref{original **}. 
	For the first property, 
	suppose for a contradiction that the set 
	$X = \{ \alpha < \omega_2 : f(\alpha) = h(\alpha) \}$ is uncountable. 
	By the stationarity of $I$, we can find an ordinal 
	$\beta \in I$ which is closed under $f$, $f^{-1}$, and $h$ 
	such that $X \cap \beta$ is uncountable. 
	Then $g_{f,\beta,0}$ and $g_{f,\beta,1}$ are in $\Gcal$. 
	Since $D_{\beta}$ is countable, 
	we can fix $\alpha \in (X \cap \beta)$ which is not in $D_\beta$ and 
	satisfies that $\pi_\beta(\alpha) \ge i(g_{f,\beta,0})$ and 
	$\pi_\beta(h(\alpha)) \ge i(g_{f,\beta,1})$. 
	As $\alpha \in X$, $f(\alpha) = h(\alpha)$. 

	Case 1: $\pi_\beta(\alpha) < \pi_\beta(h(\alpha))$. 
	Since $\alpha \in \beta \setminus D_{\beta}$, 
	we have that $g(\pi_\beta(\alpha)) < \pi_\beta(h(\alpha))$. 
	But also $\pi_\beta(\alpha) \ge i(g_{f,\beta,0})$, so 
	$g_{f,\beta,0}(\pi_\beta(\alpha)) < \pi_\beta(h(\alpha))$. 
	By the definition of $g_{f,\beta,0}$, this inequality is equivalent to 
	$\pi_\beta(f(\alpha)) < \pi_\beta(h(\alpha))$, which contradicts that 
	$f(\alpha) = h(\alpha)$.

	Case 2: $\pi_\beta(h(\alpha)) < \pi_\beta(\alpha)$. 
	Since $\alpha \in \beta \setminus D_{\beta,g}$, 
	we have that $g(\pi_\beta(h(\alpha))) < \pi_\beta(\alpha)$. 
	As $\pi_\beta(h(\alpha)) \ge i(g_{f,\beta,1})$, 
	$g_{f,\beta,1}(\pi_\beta(h(\alpha))) < \pi_\beta(\alpha)$. 
	Letting $i = \pi_\beta(h(\alpha))$, note that 
	$\pi_\beta^{-1}(i) = h(\alpha) = f(\alpha)$ which is in the range of $f$. 
	So by the definition of $g_{f,\beta,1}$, 
	$g_{f,\beta,1}(\pi_\beta(h(\alpha))) = 
	\pi_\beta(f^{-1}(\pi_\beta^{-1}(i))) = \pi_\beta(f^{-1}(h(\alpha))) = 
	\pi_\beta(f^{-1}(f(\alpha))) = \pi_\beta(\alpha)$. 
	Hence, the above inequality is the same as 
	$\pi_\beta(\alpha) < \pi_\beta(\alpha)$, which is a contradiction.

	For the second property, 
	suppose for a contradiction that there does not exist a 
	countable set $D_2 \subseteq \omega_2$ such that for all 
	distinct $\alpha, \gamma \in \omega_2 \setminus D_2$, 
	$f(h(\alpha)) \ne h(\gamma)$. 
	Then it is easy to build a set $Y \subseteq \omega_2$ of size $\omega_1$ 
	such that for all countable $Y_0 \subseteq Y$, there are distinct 
	$\alpha, \gamma \in Y \setminus Y_0$ such that 
	$f(h(\alpha)) = h(\gamma)$. 
	By the stationarity of $I$, we can 
	fix $\beta \in I$ such that $\beta$ is closed under $f$, $f^{-1}$, and $h$. 
	Fix distinct $\alpha, \gamma \in \beta \setminus D_\beta$ such that 
	$f(h(\alpha)) = h(\gamma)$, $\pi_\beta(h(\alpha)) > i(g_{f,\beta,0})$, and 
	$\pi_\beta(h(\gamma)) > i(g_{f,\beta,1})$. 
	Now the rest of the proof is similar to the above.
\end{proof}

\section{The Single Forcing} \label{The Single Forcing}

The rest of the article is devoted to proving the consistency of $(B)$. 
In this section, we describe how to force a single instance of $(B)$. 
For the remainder of the section, fix:
\begin{itemize}
	\item a sequence $\vec \pi = \langle \pi_\beta : \omega_1 \le \beta < \omega_2 \rangle$, 
	where each $\pi_\beta : \omega_1 \to \beta$ is a bijection;
	\item an injective function $f : \omega_1 \to \omega_1$.
\end{itemize}

\begin{defn} \label{definition of the main forcing}
	Define $\B(\vec \pi,f)$ to be the forcing whose conditions are 
	ordered pairs $p = (h_p,I_p)$ satisfying:
	\begin{enumerate}
		\item $h_p$ is an injective function whose domain is a countable 
		subset of $\omega_2$ which maps into $\omega_2$ and satisfies 
		that $h(\alpha) \ne \alpha$ for all $\alpha \in \dom(h)$;
		\item $I_p$ is a countable subset of $S^2_1$;
		\item for all $\beta \in I_p$, $\beta$ is closed under 
		$h_p$ and $h_p^{-1}$.
	\end{enumerate}
	Let $q \le p$ if:
	\begin{enumerate}
		\item[(a)] $h_p \subseteq h_q$;
		\item[(b)] $I_p \subseteq I_q$;
		\item[(c)] for all $\alpha \in \dom(h_q) \setminus \dom(h_p)$ 
		and for all $\beta \in I_p$, 
		if $\alpha < \beta$ then:
		\begin{enumerate}
			\item[(i)] 
			$f(\min\{\pi_\beta(\alpha),\pi_\beta(h_q(\alpha))\}) < 
			\max\{\pi_\beta(\alpha),\pi_\beta(h_q(\alpha))\}$;
			\item[(ii)] for all $\gamma \in \dom(h_q) \cap \beta$ different from $\alpha$, 
			$$
			f(\min \{ \pi_\beta(h_q(\gamma)),\pi_\beta(h_q(\alpha)) \}) < 
			\max \{ \pi_\beta(h_q(\gamma)),\pi_\beta(h_q(\alpha)) \}.\footnote{Note that 
			the parameter $f$ is only relevant in the order on 
			$\B(\vec \pi,f)$, and not for membership in $\B(\vec \pi,f)$.}
			$$
		\end{enumerate}
	\end{enumerate}
\end{defn}

\begin{lemma}
	The relation on $\B(\vec \pi,f)$ is transitive.
\end{lemma}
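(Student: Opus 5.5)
Suppose $r \le q$ and $q \le p$; we check clauses (a)--(c) of $r \le p$. Clauses (a) and (b) follow at once from the transitivity of $\subseteq$: $h_p \subseteq h_q \subseteq h_r$ and $I_p \subseteq I_q \subseteq I_r$. So everything rests on clause (c). Fix $\alpha \in \dom(h_r) \setminus \dom(h_p)$ and $\beta \in I_p$ with $\alpha < \beta$. Since $\dom(h_p) \subseteq \dom(h_q)$, we split into two cases according to whether $\alpha \in \dom(h_q)$.

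\emph{Case 1: $\alpha \in \dom(h_r) \setminus \dom(h_q)$.} Here $\beta \in I_p \subseteq I_q$, so clause (c) for $r \le q$ applies to $\alpha$ and $\beta$. Clause (i) for $r$ relative to $p$ is then literally clause (i) for $r$ relative to $q$. Clause (ii) for $r$ relative to $q$ quantifies over all $\gamma \in \dom(h_r) \cap \beta$ with $\gamma \ne \alpha$, and this is the same quantifier needed for $r \le p$. So this case is immediate.

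\emph{Case 2: $\alpha \in \dom(h_q) \setminus \dom(h_p)$.} Clause (c) for $q \le p$ gives (i) with $h_q(\alpha)$, and $h_q(\alpha) = h_r(\alpha)$, so (i) for $r$ follows. Clause (ii) needs care, because $r$ may have added new $\gamma \in \dom(h_r) \cap \beta$ that $q \le p$ does not see. If $\gamma \in \dom(h_q)$, clause (ii) from $q \le p$ handles it, using $h_q \subseteq h_r$. If $\gamma \in \dom(h_r) \setminus \dom(h_q)$, we swap roles. Since $\gamma < \beta$ and $\beta \in I_q$, clause (c)(ii) for $r \le q$, applied with $\gamma$ as the new point, gives the required inequality for every $\alpha' \in \dom(h_r) \cap \beta$ distinct from $\gamma$, in particular for $\alpha' = \alpha$. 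The expression $\min/\max$ of $\{\pi_\beta(h_r(\gamma)), \pi_\beta(h_r(\alpha))\}$ is symmetric in $\alpha$ and $\gamma$, so this is exactly what we need.

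The only step requiring thought is this last subcase, where the new point $\gamma$ comes from $r$ and the old point $\alpha$ from $q$. It is resolved by the symmetry of clause (ii) in the pair and by the inclusion $I_p \subseteq I_q$. Note that reflexivity also holds trivially, because clause (c) is vacuous when $\dom(h_q) \setminus \dom(h_p)$ is empty.
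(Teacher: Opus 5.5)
Your proof is correct and follows essentially the same route as the paper's. Like the paper, you split on whether $\alpha \in \dom(h_q)$ and resolve the one delicate subcase ($\alpha \in \dom(h_q)$, $\gamma \notin \dom(h_q)$) by applying clause (c)(ii) of $r \le q$ with the roles of $\alpha$ and $\gamma$ reversed, which works because $\beta \in I_p \subseteq I_q$ and the inequality is symmetric in the pair.
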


\begin{proof}
	Assume that $r \le q \le p$, and we show that $r \le p$. 
	(a, b) Clearly, $h_p \subseteq h_r$ and $I_p \subseteq I_r$. 
	(c) Suppose that $\alpha \in \dom(h_r) \setminus \dom(h_p)$, 
	$\beta \in I_p$, and $\alpha < \beta$. 
	(i) If $\alpha \in \dom(h_q)$, then $q \le p$ implies (i). 
	Suppose that $\alpha \notin \dom(h_q)$. 
	Then $r \le q$ implies (i).
	(ii) Let $\gamma \in \dom(h_r) \cap \beta$ be different from $\alpha$. 
	If $\alpha \notin \dom(h_q)$, then 
	the conclusion of (ii) follows from $r \le q$. 
	Suppose that $\alpha \in \dom(h_q)$. 
	If $\gamma \in \dom(h_q)$, then $q \le p$ implies the conclusion of (ii). 
	Suppose that $\gamma \notin \dom(h_q)$. 
	Then the conclusion of (ii) follows from $r \le q$ with the roles of $\alpha$ 
	and $\gamma$ reversed.
\end{proof}

\begin{lemma} \label{K is omega_1 closed}
	The forcing $\B(\vec \pi,f)$ is $\omega_1$-closed. 
	In fact, if $\langle p_n : n < \omega \rangle$ is a descending sequence of conditions 
	in $\B(\vec \pi,f)$, where each $p_n = (h_n,I_n)$, then 
	$(\bigcup_n h_n,\bigcup_n I_n)$ is a condition in $\B(\vec \pi,f)$ 
	which is the greatest lower bound of $\langle p_n : n < \omega \rangle$.
\end{lemma}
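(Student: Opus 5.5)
Let $h = \bigcup_n h_n$ and $I = \bigcup_n I_n$, and set $q = (h,I)$. The plan is to check, in order, that $q$ is a condition, that $q \le p_n$ for every $n$, and that $q$ is the greatest lower bound. Throughout we use that the sequence is descending, so $h_m \subseteq h_n$ and $I_m \subseteq I_n$ for $m \le n$; in particular the $h_n$ form a $\subseteq$-chain of functions.

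\emph{$q$ is a condition.} Since the $h_n$ form a chain of injective functions, $h$ is a function, and it is injective: any two points of $\dom(h)$ lie in a common $\dom(h_n)$, where $h_n$ is injective. The domain of $h$ is a countable union of countable subsets of $\omega_2$, so it is countable. The values of $h$ lie in $\omega_2$, and $h(\alpha) \ne \alpha$ holds because it holds in the $h_n$ containing $\alpha$. Likewise $I$ is a countable subset of $S^2_1$. For closure (3), take $\beta \in I$, say $\beta \in I_m$. For $\alpha \in \dom(h) \cap \beta$, choose $n \ge m$ with $\alpha \in \dom(h_n)$; since $\beta \in I_n$, we get $h(\alpha) = h_n(\alpha) < \beta$. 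Closure under $h^{-1}$ is argued the same way.

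\emph{$q \le p_m$ for each $m$.} Clauses (a) and (b) are immediate. For (c), take $\alpha \in \dom(h) \setminus \dom(h_m)$, $\beta \in I_m$ with $\alpha < \beta$, and choose $n > m$ with $\alpha \in \dom(h_n)$. Clause (i) holds because $p_n \le p_m$ (transitivity was proved in the preceding lemma) and $h(\alpha) = h_n(\alpha)$. For (ii), take $\gamma \in \dom(h) \cap \beta$ with $\gamma \ne \alpha$, and enlarge $n$ so that $\gamma \in \dom(h_n)$ as well. Then $p_n \le p_m$ gives the required inequality for the pair $\alpha,\gamma$, because the values $h_n(\alpha) = h(\alpha)$ and $h_n(\gamma) = h(\gamma)$ agree with $h$. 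The main obstacle is this point: clause (ii) quantifies over all of $\dom(h_q) \cap \beta$, so we must choose a single $n$ large enough to contain both $\alpha$ and $\gamma$ before invoking $p_n \le p_m$.

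\emph{$q$ is the greatest lower bound.} Suppose $r \le p_n$ for all $n$; we show $r \le q$. Clauses (a) and (b) follow from $h_n \subseteq h_r$ and $I_n \subseteq I_r$ for all $n$. For (c), take $\alpha \in \dom(h_r) \setminus \dom(h)$ and $\beta \in I$ with $\alpha < \beta$, and choose $n$ with $\beta \in I_n$. Then $\alpha \notin \dom(h_n)$, so both (i) and (ii) follow directly from $r \le p_n$, since these clauses refer only to $h_r$ and $\dom(h_r)$.
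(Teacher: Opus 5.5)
Your proof is correct and is the direct verification the paper leaves to the reader when it says the proof is easy. You check that $q$ is a condition, then get $q \le p_m$ by choosing a single $n$ with both $\alpha,\gamma \in \dom(h_n)$ and using transitivity, then get the greatest-lower-bound property by choosing $n$ with $\beta \in I_n$. The only step you leave implicit is the standard reduction from descending sequences of arbitrary countable length to $\omega$-sequences (pass to a cofinal $\omega$-subsequence and use transitivity), which the lemma's own formulation also takes for granted.
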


The proof is easy.

\begin{cor}
	The forcing $\B(\vec \pi,f)$ preserves $\omega_1$.
\end{cor}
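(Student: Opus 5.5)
This is the standard consequence of Lemma \ref{K is omega_1 closed}: an $\omega_1$-closed forcing adds no new countable sequences of ordinals. Hence every countable ordinal stays countable and $\omega_1$ remains a cardinal.

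Fix a $\B(\vec \pi,f)$-name $\dot F$ and a condition $p$ forcing that $\dot F$ is a function from $\omega$ into the ordinals. We find $q \le p$ and a ground-model function $F$ with $q \Vdash \dot F = \check F$. Define a descending sequence $\langle p_n : n < \omega \rangle$ with $p_0 = p$. Given $p_n$, extend it to a condition $p_{n+1}$ which decides $\dot F(n)$, say as the ordinal $\xi_n$. Lemma \ref{K is omega_1 closed} gives a lower bound $q$ of the sequence, namely the greatest lower bound $(\bigcup_n h_n,\bigcup_n I_n)$. Then $q$ forces $\dot F = \check F$, where $F(n) = \xi_n$.

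By density, every $\omega$-sequence of ordinals in the extension already lies in the ground model. Now suppose some condition forced that $\omega_1^V$ is countable. Then some $\dot F$ would be forced to be a surjection from $\omega$ onto $\omega_1^V$. By the above, some extension $q$ forces $\dot F$ to equal a ground-model function $F$. That $F$ would be a surjection from $\omega$ onto $\omega_1$ in $V$, which is impossible.

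There is no real obstacle. The only work is the closure lemma, which is already available. Note that the lemma gives lower bounds for $\omega$-sequences, which is exactly what this fusion needs. No transitivity issues arise either, since the order was shown to be transitive above.
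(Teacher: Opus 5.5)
Your proof is correct and is essentially the paper's argument: the paper states this corollary as an immediate consequence of Lemma \ref{K is omega_1 closed} without further proof. The implicit reasoning is exactly the standard fact you spell out, namely that an $\omega_1$-closed forcing adds no new $\omega$-sequences of ordinals and so cannot collapse $\omega_1$.
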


Whether and under what circumstances 
$\B(\vec \pi,f)$ preserves $\omega_2$ is a complex issue which 
we address over the remainder of the article.

\begin{lemma} \label{h is total}
	For any countable set $Y \subseteq \omega_2$, 
	the set of $q \in \B(\vec \pi,f)$ such that $Y \subseteq \dom(h_q)$ 
	is dense open in $\B(\vec \pi,f)$.
\end{lemma}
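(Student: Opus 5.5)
Openness is immediate: if $q$ has $Y \subseteq \dom(h_q)$ and $r \le q$, then $h_q \subseteq h_r$, so $Y \subseteq \dom(h_r)$. The work is density. Given $p$ and countable $Y$, we want $q \le p$ with $Y \subseteq \dom(h_q)$. We keep $I_q = I_p$ and extend $h_p$ one point at a time. Enumerate $Y \setminus \dom(h_p)$ as $\langle \alpha_n : n < \omega \rangle$. We build a descending sequence $p = p_0 \ge p_1 \ge \cdots$, where $p_{n+1}$ adds $\alpha_n$ to the domain, and then take the greatest lower bound given by Lemma \ref{K is omega_1 closed}. So it suffices to prove a one-point extension claim: for any condition $p$ and any $\alpha \notin \dom(h_p)$, there is $q \le p$ with $h_q = h_p \cup \{(\alpha,\xi)\}$ for some $\xi$ and $I_q = I_p$.

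For the one-point extension, first look at the closure requirement (3). Each $\beta \in I_p$ must be closed under $h_q$ and $h_q^{-1}$. So if $\alpha < \beta$ we need $\xi < \beta$, and if $\alpha \ge \beta$ we need $\xi \ge \beta$. Let $\beta^-$ be the largest element of $I_p$ that is $\le \alpha$, or $0$ if there is none. This exists because the supremum of a subset of $I_p$ below $\alpha$ that has no maximum would be a countable-cofinality limit, and no member of $S^2_1$ can sit there. Let $\beta^+ = \min(I_p \setminus (\alpha+1))$, or $\omega_2$ if $I_p$ has no element above $\alpha$. Then $\xi$ must lie in the interval $[\beta^-,\beta^+)$, and this interval has size $\omega_2$ or $\omega_1$. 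Also $\xi$ must avoid the countable set $\ran(h_p)\cup\{\alpha\}$.

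The order requirement (c) applies only to those $\beta \in I_p$ with $\alpha < \beta$, that is, $\beta \ge \beta^+$. This is a countable set. For each such $\beta$ and each $\gamma \in \dom(h_q) \cap \beta$, clause (ii) with $\gamma \neq \alpha$ asks that $f(\min\{a,b\}) < \max\{a,b\}$, where $a = \pi_\beta(h_p(\gamma))$ and $b = \pi_\beta(\xi)$. Clause (i) asks the same with $a = \pi_\beta(\alpha)$. Only countably many values $a$ occur in total. Fix $\beta$ and let $A_\beta$ be the countable set of these values. Let $\delta_\beta < \omega_1$ exceed $\sup A_\beta$ and $\sup f[A_\beta]$. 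If $\pi_\beta(\xi) \ge \delta_\beta$, then for each $a \in A_\beta$ we have $a < b$ and $f(a) < \delta_\beta \le b$, so the inequality holds. Hence the set of bad $\xi$ for this $\beta$ is contained in $\pi_\beta[\delta_\beta]$, which is countable. (Here I read $\pi_\beta$ as a bijection $\beta \to \omega_1$, as in Definition \ref{definition of (B)}.)

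Taking the union over the countably many relevant $\beta$, the forbidden values of $\xi$ form a countable set. The permitted interval $[\beta^-,\beta^+)$ is uncountable, so we can choose $\xi$ in it that avoids all forbidden values and $\ran(h_p)\cup\{\alpha\}$. Then $q$ is a condition: $h_q$ is injective, $h_q(\alpha) \neq \alpha$, and closure holds. Clauses (a)--(c) give $q \le p$.

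The main obstacle is the closure bookkeeping. One must check that when $\beta^+ < \omega_2$ the interval is still uncountable; it is, since $\beta^+ - \beta^-$ has size at least $\omega_1$ because $\beta^+$ has cofinality $\omega_1$. One must also check that the interval lies inside every $\beta \in I_p$ above $\alpha$ and above every $\beta \in I_p$ at or below $\alpha$, so both $h_q$ and $h_q^{-1}$ respect closure. Finally, in the iteration, closure at the limit stage and descent of the sequence are handled by Lemma \ref{K is omega_1 closed} and transitivity.
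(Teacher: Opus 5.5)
Your argument is essentially the paper's proof. You reduce to one new point $\alpha$ via Lemma \ref{K is omega_1 closed} and keep $I_q = I_p$. You then choose the new value below the least element of $I_p \cup \{\omega_2\}$ above $\alpha$, avoiding $\ran(h_p)$. For each $\beta \in I_p$ above $\alpha$, you also avoid the countably many ordinals whose $\pi_\beta$-image lies at or below the values $\pi_\beta(\alpha)$, $f(\pi_\beta(\alpha))$, $\pi_\beta(h_p(\gamma))$ and $f(\pi_\beta(h_p(\gamma)))$. Your bound $\delta_\beta$ plays exactly the role of the paper's sets $X_\xi$.

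There is one false step. The set $I_p \cap (\alpha+1)$ need not have a largest element. For instance, $(\emptyset, \{\omega_1\cdot(n+1) : n<\omega\})$ is a condition, and for $\alpha = \omega_1\cdot\omega$ there is no maximum. Your remark that the supremum has countable cofinality only shows the supremum is not in $I_p$; it does not show the supremum cannot be approached from inside $I_p$.

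The repair is immediate. Closure actually requires only $\xi \ge \sup(I_p\cap(\alpha+1))$, and this supremum is at most $\alpha$. So it suffices to pick $\xi \in (\alpha,\beta^+)$, as the paper does. This interval is uncountable since either $\beta^+ = \omega_2$ or $\cf(\beta^+) = \omega_1$, and taking $\xi > \alpha$ also gives $\xi \neq \alpha$ for free.

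A small slip as well: with $\pi_\beta : \beta \to \omega_1$, the set of bad $\xi$ is $\pi_\beta^{-1}[\delta_\beta]$, not $\pi_\beta[\delta_\beta]$.
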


\begin{proof}
	By Lemma \ref{K is omega_1 closed}, 
	it suffices to prove the statement in the case that $Y$ is a singleton.  
	Let $p \in \B(\vec \pi,f)$ and let $\alpha \in \omega_2$. 
	If $\alpha \in \dom(h_p)$ then we are done, so assume not. 
	We find $q \le p$ such that $\alpha \in \dom(h_p)$. 
	For each $\xi \in I_p$ greater than $\alpha$, 
	define $X_{\xi}$ as the set of $\nu < \xi$ such that at 
	least one of the following is true:
	\begin{enumerate}
		\item $\pi_\xi(\nu) \le \pi_\xi(\alpha)$;
		\item $\pi_\xi(\nu) \le f(\pi_\xi(\alpha))$;
		\item there exists $\gamma \in \dom(h_p) \cap \xi$ such that 
		$\pi_\xi(\nu) \le \pi_\xi(h_p(\gamma))$;
		\item there exists $\gamma \in \dom(h_p) \cap \xi$ such that 
		$\pi_\xi(\nu) \le f(\pi_\xi(h_p(\gamma))$.
	\end{enumerate}
	Since $\pi_\xi$ is injective and $\dom(h_p)$ is countable, 
	$X_{\xi}$ is countable. 
	Let $X = \bigcup \{ X_{\xi} : \xi \in I_p, \ \xi > \alpha \}$, 
	which is countable.  
	Let $\beta$ be the least member of $I_p \cup \{ \omega_2 \}$ 
	which is strictly greater than $\alpha$. 
	Then $\beta$ has cofinality at least $\omega_1$, 
	so we can find some $\zeta \in (\alpha,\beta)$ 
	which is not in the range of $h_p$ nor in $X$. 
	Define $q$ by letting $h_q = h_p \cup \{ \langle \alpha, \zeta \rangle \}$  
	and $I_q = I_p$. 
	It is straightforward to check that $q$ is as required.
\end{proof}

\begin{lemma} \label{K forces (B)} 
	Suppose that $\B(\vec \pi,f)$ preserves $\omega_2$ and forces that 
	$\dot \Ical = \bigcup \{ I_p : p \in \dot G \}$ is stationary in $\omega_2$, 
	where $\dot G$ is the 
	canonical $\B(\vec \pi,f)$-name for the generic filter. 
	Then $\B(\vec \pi,f)$ forces that 
	$(B)_{\vec \pi,f}$ holds as witnessed by $\dot \Ical$.
\end{lemma}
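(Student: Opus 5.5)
Let $G$ be generic for $\B(\vec \pi,f)$ and work in $V[G]$. Put $h = \bigcup\{h_p : p \in G\}$ and $\Ical = \bigcup\{I_p : p \in G\}$. The plan is to show that $h$ and $\Ical$ witness $(B)_{\vec \pi,f}$, i.e.\ the instance of Definition \ref{definition of (B)} for the fixed $\vec\pi$ and with $g = f$ (with $\pi_\beta$ read in whichever direction the definition intends).

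First I check that $h$ is as required. Compatibility of conditions in $G$ (clause (a) of the order) makes $h$ a function, and it is injective because any two values $h(\alpha),h(\gamma)$ already lie in a single $h_q$ with $q\in G$, which is injective. By Lemma \ref{h is total} and genericity, every $\alpha<\omega_2$ lies in $\dom(h)$. Here $\omega_2$ is the same in $V$ and $V[G]$, since $\B(\vec\pi,f)$ preserves $\omega_1$ (by the corollary above) and $\omega_2$ (by hypothesis). So $h:\omega_2\to\omega_2$ is total and injective. Next, $\Ical\subseteq S^2_1$: each $I_p \subseteq (S^2_1)^V$, and cofinality $\omega_1$ is preserved because both $\omega_1$ and $\omega_2$ are preserved. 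Each $\beta\in\Ical$ is closed under $h$: if $\beta\in I_p$ and $\alpha<\beta$, pick $q\in G$ with $q\le p$ and $\alpha\in\dom(h_q)$. By (b), $\beta\in I_q$, so clause (3) for $q$ gives $h(\alpha)=h_q(\alpha)<\beta$. Finally, $\Ical$ is stationary by hypothesis.

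The main point is producing, for each $\beta\in\Ical$, the countable set $D_\beta$. Fix $p\in G$ with $\beta\in I_p$ and let $D_\beta=\dom(h_p)\cap\beta$, which is countable.

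For the first bullet, take $\alpha\in\beta\setminus D_\beta$ and choose $q\in G$ with $q\le p$ and $\alpha\in\dom(h_q)$. Then $\alpha\in\dom(h_q)\setminus\dom(h_p)$, $\beta\in I_p$ and $\alpha<\beta$, so clause (c)(i) of $q\le p$ gives the required inequality, since $h_q(\alpha)=h(\alpha)$.

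For the second bullet, take distinct $\alpha,\gamma\in\beta\setminus D_\beta$. Use directedness of $G$ to choose $q\in G$ with $q\le p$ and $\alpha,\gamma\in\dom(h_q)$. Again $\alpha\notin\dom(h_p)$ and $\gamma\in\dom(h_q)\cap\beta$, so (c)(ii) applied to $\alpha$ gives the inequality for the pair $h(\gamma),h(\alpha)$. It is symmetric in min/max, so it is exactly the second bullet.

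I expect no serious obstacle. The only delicate points are the ones already handled: that $\omega_2$ and the cofinality $\omega_1$ of each $\beta$ survive into $V[G]$, so that $\Ical\subseteq S^2_1$ in the extension, and that a single $p$ fixes $D_\beta$ uniformly for all pairs, which works because condition (c) is imposed relative to $p$ for every extension. Everything else is read directly off Definition \ref{definition of the main forcing}.
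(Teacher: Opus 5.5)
Your proposal is correct and is essentially the paper's proof. You build $h$ and $\Ical$ from the generic filter, take $D_\beta=\dom(h_p)\cap\beta$ for some $p\in G$ with $\beta\in I_p$, and read both inequalities off clauses (c)(i) and (c)(ii) of the order for a suitable $q\le p$ in $G$. The only difference is that you spell out routine points the paper leaves implicit: injectivity and totality of $h$, preservation of $\omega_2$ and of cofinality $\omega_1$, and closure of each $\beta\in\Ical$ under $h$.
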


\begin{proof}
	Let $G$ be any generic filter on $\B(\vec \pi,f)$. 
	Define
	$$
	h = \bigcup \{ h_p : p \in G \} \ \text{and} \ 
	I = \bigcup \{ I_p : p \in G \}.
	$$
	Using Lemma \ref{h is total} together with 
	Definition \ref{definition of the main forcing}, 
	$h$ is a total injective function from $\omega_2$ to $\omega_2$, 
	$I \subseteq S^2_1$ is stationary in $\omega_2$, 
	and for all $\beta \in I$ and for all $\alpha < \beta$, 
	$h(\alpha) < \beta$.

	Consider $\beta \in I$. 	
	Fix $p \in G$ such that $\beta \in I_p$. 
	Let $D_\beta = \dom(h_p) \cap \beta$. 
	Now for any distinct $\alpha, \gamma \in \beta \setminus D_\beta$, 
	there exists some $q \le p$ in $G$ 
	such that $\alpha, \gamma \in \dom(h_q) \setminus \dom(h_p)$. 
	By the definition of the order on $\B(\vec \pi,f)$ and the fact 
	that $h(\alpha) = h_q(\alpha)$ and $h(\gamma) = h_q(\gamma)$, we have that 
	$$
		f(\min \{ \pi_\beta(\alpha), \pi_\beta(h(\alpha)) \}) < 
		\max \{ \pi_\beta(\alpha), \pi_\beta(h(\alpha)) \}.
	$$
	and 
	$$
		f(\min \{ \pi_\beta(h(\alpha)), \pi_\beta(h(\gamma)) \}) < 
		\max( \{ \pi_\beta(h(\alpha)), \pi_\beta(h(\gamma)) \} ).
	$$
\end{proof}

\section{Adding the Bijections} \label{Adding the Bijections}

One component of our strategy for preserving $\omega_2$ after forcing 
with $\B(\vec \pi,f)$ is to obtain the sequence of bijections 
$\vec \pi$ generically by forcing with countable conditions. 

\begin{defn}
	Define a forcing $\C$ to consist of conditions which are 
	functions $p$ with domain a countable subset of 
	$\bigcup \{ \{ \beta \} \times \beta : \omega_1 \le \beta < \omega_2 \}$ 
	such that for all $\omega_1 \le \beta < \omega_2$, 
	$p \res \{ \beta \} \times \beta$ is an injective function mapping into $\omega_1$. 
	Let $q \le p$ in $\C$ if $p \subseteq q$.
\end{defn}

The proofs of the next three lemmas are routine.

\begin{lemma} \label{C is omega_1 closed}
	The forcing $\C$ is $\omega_1$-closed. 
	In fact, if $\langle c_n : n < \omega \rangle$ is a descending sequence 
	of conditions in $\C$, 
	then $\bigcup_n c_n$ is in $\C$ 
	and is the greatest lower bound of $\langle c_n : n < \omega \rangle$.	
\end{lemma}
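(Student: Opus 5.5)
We check directly that the union of a descending sequence in $\C$ is a condition and is the greatest lower bound; $\omega_1$-closure then follows. Let $\langle c_n : n < \omega \rangle$ be descending, so $c_0 \subseteq c_1 \subseteq \cdots$ as sets of ordered pairs, and put $c = \bigcup_n c_n$.

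\emph{$c$ is a condition.} Since the $c_n$ form a chain of functions under inclusion, $c$ is a function. Its domain is $\bigcup_n \dom(c_n)$, a countable union of countable subsets of $\bigcup \{ \{\beta\} \times \beta : \omega_1 \le \beta < \omega_2 \}$, so it is a countable subset of that set. Now fix $\beta$ with $\omega_1 \le \beta < \omega_2$. Then $c \res \{\beta\} \times \beta = \bigcup_n (c_n \res \{\beta\} \times \beta)$, and each piece maps into $\omega_1$, so the union does too. For injectivity, take two distinct points $(\beta,\alpha)$ and $(\beta,\alpha')$ in the domain. Both lie in $\dom(c_n)$ for a single large enough $n$, because the domains increase. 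Injectivity of $c_n \res \{\beta\} \times \beta$ then gives distinct values.

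\emph{$c$ is the greatest lower bound.} For each $n$ we have $c_n \subseteq c$, so $c \le c_n$. Conversely, if $d \le c_n$ for every $n$, then $c_n \subseteq d$ for every $n$, hence $c \subseteq d$, that is, $d \le c$.

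\emph{$\omega_1$-closure.} Any descending sequence of countable length $\delta < \omega_1$ has a lower bound by the same argument. The union is still countable, and any two points of its domain appear together at some stage, since the stages form a chain. The only point requiring any care is this injectivity-at-a-common-stage observation.
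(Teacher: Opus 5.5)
Your proof is correct and is exactly the routine verification the paper leaves to the reader: the union of the chain is a function with countable domain, each restriction to $\{\beta\} \times \beta$ is injective into $\omega_1$ because any two points of its domain already appear together at some stage, and the union is the greatest lower bound because the order is reverse inclusion. The same union argument handles descending sequences of any countable length, giving $\omega_1$-closure.
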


\begin{lemma}
	\textsf{CH} implies that $\C$ is $\omega_2$-c.c.
\end{lemma}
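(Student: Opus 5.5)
The plan is the standard $\Delta$-system argument for countable-support-style conditions under \textsf{CH}. Let $\{p_\xi : \xi < \omega_2\}$ be conditions in $\C$; we look for $\xi \ne \eta$ such that $p_\xi \cup p_\eta$ is again a condition, since then it is a common extension.

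First, for each $\xi$ let $d_\xi = \dom(p_\xi)$, a countable subset of $\bigcup \{ \{\beta\} \times \beta : \omega_1 \le \beta < \omega_2 \}$, a set of size $\omega_2$. Under \textsf{CH} we have $\omega_1^{\omega} = \omega_1 < \omega_2$, and $\omega_2$ is regular. So the $\Delta$-system lemma for countable sets gives $X \subseteq \omega_2$ of size $\omega_2$ and a countable root $r$ with $d_\xi \cap d_\eta = r$ for distinct $\xi,\eta \in X$. Each $p_\xi \res r$ is a function from the countable set $r$ into $\omega_1$, and there are only $\omega_1^\omega = \omega_1$ such functions. Thinning $X$, we may therefore assume $p_\xi \res r$ is the same function $s$ for all $\xi \in X$. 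It follows that $p_\xi \cup p_\eta$ is a function for any $\xi, \eta \in X$.

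The remaining, and main, issue is injectivity on each slice $\{\beta\} \times \beta$. Suppose $p_\xi(\beta,\alpha) = p_\eta(\beta,\alpha') = i$ with $(\beta,\alpha) \ne (\beta,\alpha')$. Then at least one of these two points lies outside $r$. To rule this out we thin further. Let $B_\xi = \{ \beta : (\beta,\alpha) \in d_\xi \text{ for some } \alpha \}$, the countable set of first coordinates. Apply the $\Delta$-system lemma again, now to the sets $B_\xi$, obtaining a root $R$. Then refine by counting: for each $\beta \in R$, record the countable set $\ran(p_\xi \res (\{\beta\} \times \beta)) \subseteq \omega_1$. The assignment $\beta \mapsto$ this range is a function from the countable set $R$ into $[\omega_1]^{\le\omega}$, and under \textsf{CH} there are only $\omega_1$ such functions. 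So we can thin to $\omega_2$ many $\xi$ for which it is constant.

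Now take $\xi \ne \eta$ in the thinned set and a slice $\beta$. If $\beta \notin R$, only one of $p_\xi$, $p_\eta$ has points in that slice, and injectivity is inherited. If $\beta \in R$, both restrictions to the slice have the same countable range $A_\beta$, yet they can disagree off $r$. For example, $p_\xi(\beta,\alpha) = i = p_\eta(\beta,\alpha')$ with $\alpha \ne \alpha'$ is still possible. So equal ranges alone do not suffice.

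The fix is to thin instead on the full restriction $p_\xi \res (R \times \omega_2)$, up to the order-isomorphism type of its domain. Concretely, for $\beta \in R$ we record the pairs $(\ot$-position of $\alpha$ in $\{\alpha : (\beta,\alpha) \in d_\xi\}$, value$)$. Choose the root $r$ to be the $\Delta$-system root taken together with this fixed pattern. By a further $\Delta$-system plus pigeonhole argument on the countable sets $d_\xi \cap (R \times \omega_2)$, with $\omega_1^\omega = \omega_1$ many patterns, we may assume these sets form a $\Delta$-system with root $r$ and carry the same pattern.

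Then a value $i$ taken at a point $(\beta,\alpha) \in d_\xi \setminus r$ with $\beta \in R$ is, by the shared pattern, taken by $p_\eta$ at the corresponding point of $d_\eta$. The corresponding positions differ only off the root, so the two points are distinct, which threatens to contradict injectivity. To avoid this we must instead ensure the values taken off the root are disjoint across $\xi$. For that we need a cleaner approach. Replace pattern-matching with the following: by \textsf{CH}, the countable sets $V_\xi = \ran(p_\xi \res ((R \times \omega_2) \setminus r)) \subseteq \omega_1$ range over only $\omega_1$ possibilities. Thin so that $V_\xi = V$ is constant, and use the freedom to first extend each $p_\xi$ to some $p'_\xi \le p_\xi$ before thinning. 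For each $\beta \in B_\xi$ we can extend $p_\xi$ so that its slice-$\beta$ domain includes all points in the intended root.

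More simply, and this is the step I would actually carry out: first extend each $p_\xi$, by closure, so that for each $\beta \in B_\xi$ it contains $(\beta,\alpha)$ for every $\alpha$ in the $\omega_1$-many coordinates that could lie in roots. Since this is not countable, this fails. I expect this disjointness-of-values issue to be the main obstacle; the intended resolution is that it suffices to show that no antichain of size $\omega_2$ exists by noting that a condition is determined, up to its domain, by countable data.
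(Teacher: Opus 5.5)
There is a genuine gap. The proposal never proves injectivity on the slices shared by $p_\xi$ and $p_\eta$: the range argument, the order-pattern argument and the ``extend by closure'' argument all fail, as you note yourself, and the closing remark that a condition is ``determined, up to its domain, by countable data'' is not an argument. The missing idea is that for $\beta \in R$ you can pigeonhole on the \emph{actual} slice, not just its range or order pattern. Since $\beta < \omega_2$, the slice $\{\beta\}\times\beta$ has size at most $\omega_1$. So $p_\xi \res (\{\beta\}\times\beta)$ is a countable partial function from a set of size $\omega_1$ into $\omega_1$, and under \textsf{CH} there are only $\omega_1^{\omega} = \omega_1$ of these. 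Because $R$ is countable, $S_R = \bigcup_{\beta \in R} \{\beta\}\times\beta$ also has size at most $\omega_1$, so there are only $\omega_1$ possibilities for $p_\xi \res S_R$. You can therefore thin to $\omega_2$ many $\xi$ for which $p_\xi \res S_R$ is literally the same function. Your worry about ``the $\omega_1$-many coordinates that could lie in roots'' disappears, and no extension of the $p_\xi$ is needed.

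With this the proof closes at once. Take $\xi \ne \eta$ in the thinned set. Any point of $\dom(p_\xi) \cap \dom(p_\eta)$ has first coordinate in $B_\xi \cap B_\eta = R$, where the two conditions agree, so $p_\xi \cup p_\eta$ is a function. For $\beta \in R$, both conditions have the same injective restriction to $\{\beta\}\times\beta$. For $\beta \notin R$, at most one of them has points in that slice. Hence $p_\xi \cup p_\eta \in \C$ is a common extension, which in fact shows $\C$ is $\omega_2$-Knaster. Your first $\Delta$-system, on the full domains $d_\xi$, is unnecessary. The $\Delta$-system on the first-coordinate sets $B_\xi$ (which uses $\omega_1^{\omega} < \omega_2$) together with the pigeonhole on $p_\xi \res S_R$ does all the work.
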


\begin{lemma} \label{generic bijections}
	Suppose that $G$ is a generic filter on $\C$. 
	For each $\omega_1 \le \beta < \omega_2$, define 
	$\pi_\beta : \beta \to \omega_1$ by letting $\pi_\beta(\gamma) = c(\beta,\gamma)$ 
	for some (any) $c \in G$ such that $(\beta,\gamma) \in \dom(c)$. 
	Then each $\pi_\beta$ is a bijection of $\beta$ onto $\omega_1$.
\end{lemma}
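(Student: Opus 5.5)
The plan is a standard density argument, carried out in the generic extension $V[G]$ and using only that $G$ is a filter meeting certain dense subsets of $\C$. Two facts are needed for each $\omega_1 \le \beta < \omega_2$: that $\pi_\beta$ is a well-defined injective function with domain all of $\beta$, and that it maps onto $\omega_1$.

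\emph{Well-definedness and injectivity.} Any two conditions in $G$ are compatible, since $G$ is a filter. Because the order on $\C$ is reverse inclusion, a common extension of two conditions $c, c' \in G$ contains both, so $c \cup c'$ is a function. Hence $\bigcup G$ is a function, and $\pi_\beta$ is its restriction to $\{\beta\} \times \beta$, reindexed by the second coordinate. For injectivity, suppose $\pi_\beta(\gamma) = \pi_\beta(\gamma')$ with $\gamma \ne \gamma'$. These two values are witnessed by conditions in $G$, and these have a common extension $d \in G$. Then $d \res \{\beta\} \times \beta$ is not injective, which contradicts the definition of $\C$.

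\emph{Totality.} For each $\gamma < \beta$, I would show that $D_{\beta,\gamma} = \{ c \in \C : (\beta,\gamma) \in \dom(c) \}$ is dense; this set lies in $V$. Given $c$ with $(\beta,\gamma) \notin \dom(c)$, note that the set of values of $c \res \{\beta\}\times\beta$ is countable. Choose $\xi < \omega_1$ outside it and let $c' = c \cup \{((\beta,\gamma),\xi)\}$. This $c'$ is still a condition: its domain is countable, and its restriction to each slice remains injective into $\omega_1$.

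\emph{Surjectivity.} For each $\xi < \omega_1$, I would show that $E_{\beta,\xi} = \{ c : \xi \in \ran(c \res \{\beta\}\times\beta) \}$ is dense. Given $c$, the set of $\gamma$ with $(\beta,\gamma) \in \dom(c)$ is countable while $\beta \ge \omega_1$. If $\xi$ is not already a value, pick an unused $\gamma < \beta$ and add $((\beta,\gamma),\xi)$. This preserves injectivity because $\xi$ was not previously attained on that slice.

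By genericity, $G$ meets all these dense sets, which gives totality and surjectivity. There is no real obstacle. The only point needing care is surjectivity, which uses $\beta \ge \omega_1$ to guarantee an unused $\gamma$; totality likewise uses the countability of conditions to guarantee an unused $\xi$. Note also that these dense sets live in $V$, and that $\omega_1$ is preserved by Lemma \ref{C is omega_1 closed}, so ``bijection onto $\omega_1$'' has the intended meaning in $V[G]$.
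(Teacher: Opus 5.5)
Your proposal is correct. It is the routine density argument the paper has in mind: the paper omits the proof, and your filter-compatibility step for well-definedness and injectivity, together with the two density arguments for totality and surjectivity, is exactly what is needed.
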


\section{A Family of Models for Side Conditions} 
\label{A Family of Models for Side Conditions}

A second component for proving the preservation of $\omega_2$ by forcings of the form 
$\B(\vec \pi,f)$, as well as for iterating forcings of this type, is to make use of a 
well-behaved family of models for which we can construct generic conditions. 
Such a family is described in the next theorem.

\begin{thm}[Essentially Mitchell (\cite{mitchell})] \label{Ycal}
	Assume $\textsf{CH}$, $2^{\omega_2} = \omega_3$, and $\Box_{\omega_2}$. 
	Then there exist stationary subsets $\Ycal$ and $\Ycal^+$ of $[H(\omega_3)]^{\omega_1}$ 
	satisfying:
	\begin{enumerate}
		\item for all $M \in \Ycal$, $M \prec H(\omega_3)$, $|M| = \omega_1$, 
		and $M^\omega \subseteq M$;
		\item for all $M, N \in \Ycal$, $M \cap N \in \Ycal$;
		\item $\Ycal^+ \subseteq \Ycal$;
		\item for all $M \in \Ycal$ and for all $N \in \Ycal^+$, 
		if $M \cap \omega_2 < N \cap \omega_2$ then 
		$M \cap N \in N$.\footnote{The reason we need two families of models 
		instead of one is because 
		$\Ycal^+$ is not closed under intersections.}
	\end{enumerate}
\end{thm}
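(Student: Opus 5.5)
We build the models along a $\Box_{\omega_2}$-sequence, following Mitchell's construction. Fix a $\Box_{\omega_2}$-sequence $\langle C_\delta : \delta < \omega_3,\ \delta \text{ limit}\rangle$, so each $C_\delta$ is club in $\delta$, $\ot(C_\delta) \le \omega_2$, and coherence holds: $C_\delta \cap \gamma = C_\gamma$ whenever $\gamma$ is a limit point of $C_\delta$. Using $2^{\omega_2} = \omega_3$, fix an enumeration $\langle x_\xi : \xi < \omega_3\rangle$ of $H(\omega_3)$ and a continuous $\in$-increasing chain $\langle N_\delta : \delta < \omega_3\rangle$ of elementary substructures of size $\omega_2$ with union $H(\omega_3)$, such that:
\begin{itemize}
\item[] $N_\delta \cap \omega_3 = \delta$ on a club of $\delta$;
\item[] $\langle N_\gamma : \gamma \le \delta\rangle$ and $C_\delta$ belong to $N_{\delta+1}$.
\end{itemize}
By CH and $\omega_2^{\omega}=\omega_2$, we may also arrange ${}^\omega N_\delta \subseteq N_\delta$ when $\cf(\delta) \ne \omega$.

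Next, for each $\delta$ choose canonically (for example, least in the well-order $\langle x_\xi\rangle$ definable from the chain) a continuous $\subseteq$-increasing filtration $\langle N_\delta^\eta : \eta < \omega_2\rangle$ of $N_\delta$ into sets of size $\omega_1$, chosen coherently along the square sequence. Concretely, $N_\delta^\eta$ is defined uniformly from $\eta$, $C_\delta$, and the chain below $\delta$, for instance as the Skolem hull of $\eta \cup \{N_\gamma^{\eta} : \gamma \in C_\delta\}$ with its $\omega$-closure taken, so that $N_\gamma^\eta = N_\delta^\eta \cap N_\gamma$ for limit points $\gamma$ of $C_\delta$. Define $\Ycal$ to be the set of $M \prec H(\omega_3)$ with ${}^\omega M \subseteq M$ and $|M| = \omega_1$ such that, letting $\delta = \sup(M \cap \omega_3)$ and $\eta = M \cap \omega_2$, we have $M = N_\delta^\eta$ (or $M = N_{\delta'}^\eta$ for the appropriate $\delta'$ determined by $M$). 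Define $\Ycal^+$ to be the subfamily where additionally $\cf(\delta) = \omega_1$ and $\eta$ has cofinality $\omega_1$ and is a closure point of the relevant bookkeeping.

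Stationarity of both families is shown by a closing-off argument. Given $F : [H(\omega_3)]^{<\omega} \to H(\omega_3)$, build an $\omega_1$-chain of models and take $\delta$ and $\eta$ at closure points. Coherence of the filtrations then shows that the resulting $M$ equals $N_\delta^\eta$ and is closed under $F$.

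For property (2), let $M = N_\delta^\eta$ and $M' = N_{\delta'}^{\eta'}$ with $\delta \le \delta'$. One shows $M \cap M' = N_{\delta''}^{\min(\eta,\eta')}$, where $\delta''$ is the maximal point of $C_{\delta'} \cup \{\delta'\}$ at which the hulls agree, namely the point given by $\min(C_{\delta'}\setminus \delta)$ and coherence. Here $\Box$ is essential: without coherence, intersections of hulls are not hulls. Countable closedness and elementarity of the intersection follow because both are Skolem hulls over a common coherent structure. For property (4), let $N \in \Ycal^+$ with $\eta_N = N \cap \omega_2 > M \cap \omega_2$. Then $M \cap N = N_{\delta''}^{\eta_M}$, which is definable from parameters $\delta''$ and $\eta_M$. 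Since $\eta_M < \eta_N$, we have $\eta_M \in N$, and $\cf(\delta_N)=\omega_1$ together with $\omega$-closure lets us find $\delta''$ in $N$. Hence $M \cap N \in N$.

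The main obstacle is the precise definition of the coherent filtrations. The identity $N_\delta^\eta \cap N_{\delta'}^{\eta'} = N_{\delta''}^{\min(\eta,\eta')}$ must hold exactly, while keeping $\omega$-closure. Taking $\omega$-closures could break the coherence needed for the intersection formula. I would first try Mitchell's approach, replacing $C_\delta$ by its "$\omega_1$-approximations" and using CH so that the countable subsets of each level are already enumerated in order type $\omega_1$ inside the hull.
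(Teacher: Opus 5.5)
The decisive gap is in your definition of $\Ycal^+$ and in the argument for (4). The conditions you impose, $\cf(\delta)=\omega_1$ and $\cf(\eta)=\omega_1$, hold automatically for every $\omega$-closed elementary $M$ of size $\omega_1$. The phrase ``closure point of the relevant bookkeeping'' is never specified, so your $\Ycal^+$ does no work.

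But (4) needs real work. If $M\cap N\in N$, then $\sup(M\cap N\cap\omega_3)\in N$, so $M\cap N$ must be bounded in $N\cap\omega_3$. Cofinality plus $\omega$-closure cannot give you a parameter $\delta''\in N$ above $M\cap N$. Your own framework exhibits the failure. Suppose $\eta<\eta'$ and $N^\eta_\delta$, $N^{\eta'}_\delta$ both lie in $\Ycal$ with supremum $\delta$. Nothing in your conditions keeps $N^{\eta'}_\delta$ out of $\Ycal^+$. Yet $N^\eta_\delta\cap N^{\eta'}_\delta=N^\eta_\delta$ is cofinal in $\delta=\sup(N^{\eta'}_\delta\cap\omega_3)$, hence is not a member of $N^{\eta'}_\delta$. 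So $\Ycal^+$ must force the height of $N$ to be minimal for its supremum.

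The paper achieves this by letting $N\in\Ycal^+$ iff $\ot(c_{\sup(N)})=N\cap\omega_2$, where $\vec c$ is the square sequence. It pairs this with the lemma that $\ot(c_\beta)\le M\cap\omega_2$ for every $M\in\Ycal$ and every limit point $\beta$ of $M\cap\omega_3$, proved from coherence and $M^\omega\subseteq M$. Then $\beta=\sup(M\cap N)$ has $\ot(c_\beta)\le M\cap\omega_2<N\cap\omega_2$, so $\beta<\sup(N)$ and $\alpha=\min((N\cap\omega_3)\setminus\beta)$ exists. Consequently $M\cap N\cap\omega_3=\{g_\alpha(\xi,\gamma):\xi,\gamma\in M\cap\omega_2\}$ is definable in $N$ from $\alpha$ and $M\cap\omega_2$. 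The right parameter is this $\alpha$, not $\min(C_{\delta'}\setminus\delta)$.

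Stationarity of $\Ycal^+$ then needs a genuine construction too, which your closing-off sketch does not supply. The paper works inside some $L\prec\Acal$ of size $\omega_2$ with $\cf(L\cap\omega_3)=\omega_2$. At each step it adds a limit point $\gamma_\alpha$ of $c_{L\cap\omega_3}$ with $\ot(c_{\gamma_\alpha})>M_\alpha\cap\omega_2$.

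Your treatment of (2) is also not a proof. By defining $\Ycal$ as the models literally equal to some $N^\eta_\delta$, you make (2) hinge on an exact intersection formula. You leave that formula open and yourself suspect it may break under $\omega$-closure. Moreover, as written, the hull of $\eta\cup\{N^\eta_\gamma:\gamma\in C_\delta\}$ generally has size $\omega_2$ when $\ot(C_\delta)=\omega_2$. So the filtration does not even consist of sets of size $\omega_1$.

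The paper sidesteps this entirely. It puts the square sequence and a coherent system $\vec A$ of filtrations of the ordinals below $\omega_3$ into the structure $\Acal=(H(\omega_3),\in,f,\vec c,\vec A)$, with $A_{\beta,\xi}=A_{\alpha,\xi}\cap\beta$ whenever $\beta\in\text{lim}(c_\alpha)$. It then lets $\Ycal$ be \emph{all} $\omega$-closed elementary submodels of $\Acal$ of size $\omega_1$. Then (1)--(3) are immediate. The hull description you are aiming for, $M\cap\omega_3=\{g_{\sup(M)}(\xi,\gamma):\xi,\gamma\in M\cap\omega_2\}$, becomes a lemma rather than a definition.
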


While not stated in this exact manner in Mitchell's work, 
this result constitutes a small fragment of 
the information contained in his proof of the consistency that the approachability ideal 
$I[\omega_2]$ restricted to $\text{Cof}(\omega_1)$ can be trivial (\cite{mitchell}). 
We include a proof for the convenience of the reader, 
which occupies the remainder of 
this section.

Fix a $\Box_{\omega_2}$-sequence 
$\vec c = \langle c_\alpha : \alpha < \omega_3, \ \alpha \ \text{limit} \rangle$. 
So each $c_\alpha$ is a club subset of $\alpha$ whose order type is at most $\omega_2$, 
and if $\beta \in \text{lim}(c_\alpha)$ then $c_\alpha \cap \beta = c_\beta$. 
A straightforward construction using the square sequence yields the following lemma. 
For a proof, see \cite[pp.\! 17--18]{mitchell}.

\begin{lemma}
	There exists a sequence 
	$\vec A = \langle A_{\eta,\xi} : \eta < \omega_3, \ \xi < \omega_2 \rangle$ 
	satisfying:
	\begin{enumerate}
	\item for all $\eta < \omega_3$, $\langle A_{\eta,\xi} : \xi < \omega_2 \rangle$ 
	is an increasing and continuous sequence of sets of size less than $\omega_2$ 
	with union equal to $\eta$;
	\item whenever $\beta \in \text{lim}(c_\alpha)$, 
	for all $\xi < \omega_2$, 
	$A_{\beta,\xi} = A_{\alpha,\xi} \cap \beta$.
\end{enumerate}
\end{lemma}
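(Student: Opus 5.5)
We build $\vec A$ by recursion on $\eta < \omega_3$, using $\vec c$ to propagate coherence. Throughout we fix, for each $\eta$ with $\omega_2 \le \eta < \omega_3$, a bijection $e_\eta : \omega_2 \to \eta$. The idea is to define $A_{\eta,\xi}$ from the square club $c_\eta$ whenever $c_\eta$ has order type $\omega_2$, and from $e_\eta$ otherwise. The square coherence then makes the definition at $\beta \in \lim(c_\alpha)$ a literal restriction of the definition at $\alpha$.

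For $\eta \le \omega_2$, let $A_{\eta,\xi} = \eta \cap \xi$; condition (2) only concerns pairs with $\beta \in \lim(c_\alpha)$, and those with $\alpha \le \omega_2$ are handled the same way since then $A_{\alpha,\xi}\cap\beta = \beta\cap\xi$. For a successor $\eta = \delta+1$, let $A_{\eta,\xi} = A_{\delta,\xi} \cup \{\delta\}$ for $\xi \geq 1$ and $A_{\eta,0}=\emptyset$. Successor ordinals never occur as $\alpha$ or as limit points, so there is no coherence burden there. For limit $\eta > \omega_2$, write $c_\eta = \{\gamma^\eta_i : i < \ot(c_\eta)\}$ increasingly. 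There are two cases.

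\emph{Case 1: $\ot(c_\eta) < \omega_2$.} We are free to define $A_{\eta,\xi}$ by a standard closure argument. Let $A_{\eta,\xi}$ be the closure of $e_\eta[\xi] \cup c_\eta$ under the operations $\gamma \mapsto A_{\gamma,\xi'}$ for $\xi' \le \xi$, taking unions at limits. In addition, we require $A_{\eta,\xi} \cap \gamma = A_{\gamma,\xi}$ for all $\gamma \in \lim(c_\eta)$. This is legitimate because these $\gamma$ satisfy $c_\gamma = c_\eta\cap\gamma$, so their sequences already cohere with one another. Concretely, we can take
$$A_{\eta,\xi} = \bigcup_{\gamma\in\lim(c_\eta)} A_{\gamma,\xi} \cup B_{\eta,\xi},$$
where $B_{\eta,\xi}$ consists of $e_\eta[\xi]$ intersected with the pieces $[\sup(c_\eta\cap\gamma'),\eta)$ above the last relevant limit point, plus the ordinals of $c_\eta$.

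\emph{Case 2: $\ot(c_\eta)=\omega_2$.} Then every proper initial segment is bounded, and we set
$$A_{\eta,\xi} = \bigcup_{i<\xi} \Bigl( A_{\gamma^\eta_{i+1},\xi} \cup \{\gamma^\eta_i\}\Bigr)\quad\text{(suitably continuous at limits)}.$$
Since $\xi<\omega_2$, this set is a union of fewer than $\omega_2$ sets, each of size less than $\omega_2$, so it has size less than $\omega_2$. Its union over $\xi$ is $\eta$, because each $\gamma^\eta_{i+1}$ is covered eventually. Coherence at $\beta=\gamma^\eta_j \in \lim(c_\eta)$ holds because $c_\beta=c_\eta\cap\beta$ and $\beta$ is defined by the same formula (in Case 1, where $\ot(c_\beta) = j < \omega_2$). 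To keep the formulas uniform, one should instead use a single formula in both cases, indexed by the position in $c_\eta$: define $A_{\eta,\xi}$ as $\bigcup\{A_{\gamma,\xi}\cap[\sup(c_\eta\cap\gamma),\gamma)\ \text{handled by } e\} \ldots$, parameterized solely by $c_\eta$ and the earlier values. Coherence then follows by induction from $c_\beta = c_\eta\cap\beta$.

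Increasingness and continuity in $\xi$ hold inductively, since every constituent is increasing and continuous in $\xi$ and the unions are monotone. The main obstacle is making the definition depend only on $c_\eta$ and previously defined sequences, and not on the choice of $e_\eta$, at every point that can later be a limit point of some $c_\alpha$. The plan to achieve this is to use the "blocks" $[\gamma^\eta_i,\gamma^\eta_{i+1})$ and, for each block, a sequence $\langle A_{\gamma^\eta_{i+1},\xi}\setminus\gamma^\eta_i\rangle_\xi$ that depends only on the successor element of $c_\eta$. One then verifies carefully that the size bound survives in Case 1 when $\ot(c_\eta)=\omega_1$, where the union is over $\omega_1$ blocks of size at most $\omega_1$, so it is still fine under the cardinal arithmetic at hand. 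Here $|A|<\omega_2$ means $\le\omega_1$, and $\omega_1\cdot\omega_1=\omega_1$.
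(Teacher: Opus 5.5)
\textbf{There is a genuine gap.} The formulas you actually write down do not cohere with one another, and the single uniform formula that would repair this is never stated. The decisive failure is between Case 1 and Case 2.

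In Case 1 you put all of $c_\eta$ (and some $e_\eta$-points) into every $A_{\eta,\xi}$, including $\xi=0$. Now take $\alpha>\omega_2$ with $\cf(\alpha)=\omega_2$, so $\ot(c_\alpha)=\omega_2$. Every $\beta\in\lim(c_\alpha)$ has $\ot(c_\beta)<\ot(c_\alpha)\le\omega_2$, so each such $\beta>\omega_2$ falls under Case 1, and hence $c_\alpha\cap\beta=c_\beta\subseteq A_{\beta,0}$. Coherence then forces $A_{\alpha,0}\supseteq\bigcup\{c_\alpha\cap\beta:\beta\in\lim(c_\alpha)\setminus(\omega_2+1)\}=c_\alpha$, which has size $\omega_2$. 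So no Case 2 definition whatsoever can satisfy both (1) and (2).

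Concretely, your Case 2 formula gives $A_{\alpha,\xi}\subseteq\gamma^\alpha_\xi$. But for a limit $j>\xi$ we have $\gamma^\alpha_\xi\in c_{\gamma^\alpha_j}\subseteq A_{\gamma^\alpha_j,\xi}$, so $A_{\alpha,\xi}\cap\gamma^\alpha_j\ne A_{\gamma^\alpha_j,\xi}$. Your sentence that $\beta$ ``is defined by the same formula'' is exactly where the argument breaks: Cases 1 and 2 use different formulas.

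Two further problems have the same source:
\begin{enumerate}
\item Your base case $A_{\beta,\xi}=\beta\cap\xi$ clashes with Case 1 whenever some $c_\alpha$ with $\alpha>\omega_2$ has a limit point $\beta<\omega_2$, which a $\Box_{\omega_2}$-sequence need not exclude. In that situation Case 1 gives $A_{\alpha,0}\cap\beta\supseteq c_\beta\not\subseteq A_{\beta,0}=\emptyset$.
\item The ``closure'' version of Case 1 is self-contradictory. Closing under $\gamma\mapsto A_{\gamma,\xi'}$ can add points below a limit point $\gamma$ of $c_\eta$ that lie outside $A_{\gamma,\xi}$, so you cannot ``in addition require'' $A_{\eta,\xi}\cap\gamma=A_{\gamma,\xi}$.
\end{enumerate}

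\textbf{The missing idea} is that the truncation by $\xi$, which you use only in Case 2, must be applied uniformly at every limit $\eta$. This needs no bijections $e_\eta$, no case split, and no special treatment of $\eta\le\omega_2$. Set $A_{0,\xi}=\emptyset$ and $A_{\delta+1,\xi}=A_{\delta,\xi}\cup\{\delta\}$. For limit $\eta$, let $c_\eta=\{\gamma_i:i<\ot(c_\eta)\}$ be the increasing enumeration and define
\[
A_{\eta,\xi}=A_{\gamma_0,\xi}\cup\bigcup\{A_{\gamma_{i+1},\xi}\setminus\gamma_i : i<\min(\xi,\ot(c_\eta))\}.
\]
This satisfies every requirement:
\begin{enumerate}
\item \emph{Coherence.} The $i$-th block lies in $[\gamma_i,\gamma_{i+1})$. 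So for $\beta=\gamma_j\in\lim(c_\eta)$, intersecting with $\beta$ keeps exactly the blocks with $i<\min(\xi,j)$. That is the same formula evaluated from $c_\beta=c_\eta\cap\beta$, whose enumeration agrees with that of $c_\eta$ below $j$.
\item \emph{Size.} $|A_{\eta,\xi}|\le\omega_1\cdot|\xi|\le\omega_1$, even when $\ot(c_\eta)=\omega_2$.
\item \emph{Monotonicity and continuity in $\xi$.} These are inherited from the sequences $\langle A_{\gamma_{i+1},\xi}:\xi<\omega_2\rangle$, and the index set $\{i<\min(\xi,\ot(c_\eta))\}$ is continuous in $\xi$.
\item \emph{Union.} The union over $\xi$ is $\gamma_0\cup\bigcup_i[\gamma_i,\gamma_{i+1})=\eta$, because $c_\eta$ is closed and unbounded in $\eta$.
\end{enumerate}
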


\begin{defn}
	For each $\beta < \omega_3$, for each $\xi < \omega_2$, 
	and for each $\gamma < \ot(A_{\beta,\xi})$, 
	define $a_{\beta,\xi,\gamma}$ to be the $\gamma$-th element of $A_{\beta,\xi}$.
\end{defn}

Note that whenever $\beta \in \lim(c_\alpha)$, 
the fact that $A_{\beta,\xi} = A_{\alpha,\xi} \cap \beta$ implies that for all 
$\gamma < \ot(A_{\beta,\xi})$, $a_{\beta,\xi,\gamma} = a_{\alpha,\xi,\gamma}$.

\begin{defn}
	For each $\beta < \omega_3$, define 
	$g_\beta : \omega_2 \times \omega_2 \to \beta$ by 
	\begin{itemize}
		\item $g_\beta(\xi,\gamma) = a_{\beta,\xi,\gamma}$ if 
		$\gamma < \ot(A_{\beta,\xi})$,
		\item $g_\beta(\xi,\gamma) = 0$ if 
		$\ot(A_{\beta,\xi}) \le \gamma$.
	\end{itemize}
\end{defn}

Since $2^{\omega_2} = \omega_3$, we can fix a bijection 
$f : \omega_3 \to H(\omega_3)$. 
Define $\Acal$ to be the structure 
$(H(\omega_3),\in,f,\vec c,\vec A)$. 
Due to $f$ being a part of the structure, $\Acal$ has definable Skolem functions. 
Observe that for all $\beta < \omega_3$, $g_\beta$ is definable in $\Acal$.

For simplicity in notation, for any $N \subseteq H(\omega_3)$, we abbreviate 
$\sup(N \cap \omega_3)$ as $\sup(N)$.

\begin{defn}
	Let $\Ycal$ denote the set of all $N \prec \Acal$ satisfying that 
	$|N| = \omega_1$ and $N^\omega \subseteq N$.
\end{defn}

Note that for all $M, N \in \Ycal$, $M \cap N \in \Ycal$.

\begin{lemma} \label{limits cofinal}
	For all $M \in \Ycal$, 
	$\text{lim}(c_{\sup(M)}) \cap M$ is cofinal in $\sup(M)$.
\end{lemma}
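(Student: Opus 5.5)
Let $M \in \Ycal$ and set $\delta = \sup(M)$. The plan is to show that $\delta$ is a limit ordinal of cofinality $\omega_1$ and that $c_\delta$ has no maximum below $\delta$. Then I will use elementarity together with the coherence of the square sequence to show that $M$ contains many limit points of $c_\delta$.

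First, $\delta$ is a limit ordinal and $\cf(\delta) = \omega_1$. Indeed, $M \cap \omega_3$ has no largest element, since $\alpha \in M$ implies $\alpha+1 \in M$. Moreover, $M^\omega \subseteq M$ gives that every countable subset of $M \cap \omega_3$ lies in $M$ and so has its supremum in $M$. Hence $M \cap \omega_3$ is $\omega$-closed, and it has size $\omega_1$. So $\cf(\delta) = \omega_1$, and therefore $c_\delta$ is a club in $\delta$ and $\lim(c_\delta)$ is club in $\delta$.

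Second, I will show that $\lim(c_\delta) \cap M$ is unbounded. Fix $\eta \in M \cap \delta$. Build an increasing sequence $\langle \alpha_n : n<\omega\rangle$ alternating between points of $M$ and points of $\lim(c_\delta)$, so that $\eta < \alpha_0 < \beta_0 < \alpha_1 < \beta_1 < \cdots$ with $\alpha_n \in M$ and $\beta_n \in \lim(c_\delta)$. This is possible since both sets are unbounded in $\delta$. Let $\beta = \sup_n \alpha_n = \sup_n \beta_n$. Then $\beta \in \lim(c_\delta)$ because $\lim(c_\delta)$ is closed, and $\beta < \delta$ since $\cf(\delta) = \omega_1$. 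Also $\beta \in M$, because $\{\alpha_n : n<\omega\} \in M$ by $M^\omega \subseteq M$ and $M$ computes suprema. So $\beta \in \lim(c_\delta) \cap M$ and $\beta > \eta$.

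The main point to double-check is just the interplay of the two closure properties: the argument needs $\cf(\delta)=\omega_1$, so that countable sups stay below $\delta$, and also $\omega$-closure of $M$, so that those sups land in $M$. Both come directly from $M \in \Ycal$. Notably, no appeal to $\vec A$ or to elementarity beyond closing under successors and countable suprema seems necessary, so I expect the proof to be short. If one wished to avoid the interleaving, an alternative is to note that $M\cap\omega_3$ closure points form an $\omega$-club in $\delta$ and intersect it with the club $\lim(c_\delta)$ restricted to cofinality $\omega$ points. This is the same argument phrased as the intersection of an $\omega$-closed unbounded set with a club in an ordinal of cofinality $\omega_1$.
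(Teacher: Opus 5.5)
Your proof is correct and is the argument the paper has in mind when it says the lemma ``follows easily from the fact that $M^\omega \subseteq M$.'' Countable closure of $M$ gives $\cf(\sup(M)) = \omega_1$, so $\lim(c_{\sup(M)})$ is club in $\sup(M)$, and an interleaved $\omega$-sequence then has its supremum in both $M$ and $\lim(c_{\sup(M)})$; you have supplied the details the paper leaves out.
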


\begin{proof}
	This follows easily from the fact that $M^\omega \subseteq M$.
\end{proof}

\begin{lemma} \label{limit order type}
	Let $M \in \Ycal$. 
	If $\beta \in \text{lim}(M \cap \omega_3)$, then 
	$\ot(c_\beta) \le M \cap \omega_2$.
\end{lemma}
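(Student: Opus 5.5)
We must show: for $M\in\Ycal$ and $\beta$ a limit point of $M\cap\omega_3$ (so $\beta\le\sup(M)$, $\beta$ limit), $\ot(c_\beta)\le M\cap\omega_2$. Note $M\cap\omega_2$ is an ordinal $\delta_M$, since $M\prec\Acal$ has size $\omega_1$ and contains $\omega_1$ as a subset. The idea is to find a club subset of $c_\beta$ of order type at most $\delta_M$ consisting of points of $\lim(c_\beta)$ that lie in $M$. Coherence of the square then bounds the order type of all of $c_\beta$.

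First take $\cf(\beta)=\omega$. Then $\ot(c_\beta)$ has countable cofinality. Pick a countable $x\subseteq M\cap\beta$ cofinal in $\beta$. Since $M^\omega\subseteq M$, $x\in M$, and $\beta=\sup x\in M$ by elementarity. So $c_\beta\in M$ and $\ot(c_\beta)\in M\cap\omega_3$. As $\ot(c_\beta)\le\omega_2$ and cannot equal $\omega_2$ (its cofinality is $\omega$), we get $\ot(c_\beta)\in M\cap\omega_2=\delta_M$. Hence $\ot(c_\beta)<\delta_M$.

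Now take $\cf(\beta)=\omega_1$; this is forced because $|M|=\omega_1$. Let $E$ be the set of $\gamma<\beta$ with $\cf(\gamma)=\omega$ and $\gamma\in\lim(M\cap\omega_3)$. Each such $\gamma$ lies in $M$ by the previous paragraph's argument. The set $E$ is $\omega$-club in $\beta$, and $\lim(c_\beta)$ is club in $\beta$. Since $\cf(\beta)=\omega_1$, the set $E\cap\lim(c_\beta)$ is unbounded in $\beta$.

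For $\gamma\in E\cap\lim(c_\beta)$, coherence gives $c_\gamma=c_\beta\cap\gamma$. By the countable-cofinality case, $\ot(c_\beta\cap\gamma)=\ot(c_\gamma)<\delta_M$. Then
$$
\ot(c_\beta)=\sup\{\ot(c_\beta\cap\gamma):\gamma\in E\cap\lim(c_\beta)\}\le\delta_M,
$$
because these $\gamma$ are cofinal in $\beta$.

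The main care point is the countable-cofinality case. There we need $\beta\in M$, which uses $M^\omega\subseteq M$, and we need to rule out $\ot(c_\beta)=\omega_2$, which uses cofinality. Limits of uncountable cofinality exceeding $\omega_1$ cannot occur, since $M\cap\beta$ has size at most $\omega_1$.
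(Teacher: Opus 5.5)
Your proof is correct, but it is organized differently from the paper's.

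The paper splits according to whether $\beta=\sup(M)$ or $\beta<\sup(M)$. In the first case it invokes Lemma~\ref{limits cofinal} to get cofinally many $\gamma\in\lim(c_\beta)\cap M$, for which $\ot(c_\beta\cap\gamma)=\ot(c_\gamma)\in M\cap\omega_2$. In the second case it passes to $\alpha=\min((M\cap\omega_3)\setminus\beta)$, shows by elementarity that $\beta\in\lim(c_\alpha)$, and compares $c_\beta=c_\alpha\cap\beta$ with $c_\alpha$.

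You split instead on $\cf(\beta)$. For countable cofinality you put $\beta$ itself into $M$ via $M^\omega\subseteq M$. For cofinality $\omega_1$ you run the club/$\omega$-club intersection directly at $\beta$ (essentially the proof of Lemma~\ref{limits cofinal}) and then reduce to the countable-cofinality case. So the position of $\beta$ relative to $\sup(M)$ never enters.

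What each route buys: yours is more uniform and self-contained. The paper's has the side benefit of isolating the fact that $\beta\in\lim(c_\alpha)$ for the next point $\alpha$ of $M$, which is reused in Lemma~\ref{g equal}.

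Your explicit care in excluding $\ot(c_\beta)=\omega_2$ is well placed. In the paper's second case with $\beta<\alpha$, one in fact has $\cf(\alpha)=\omega_2$. Otherwise $\alpha$ would be a successor or a limit of cofinality at most $\omega_1$, and elementarity together with $\omega_1\subseteq M$ would put points of $M$ into $[\beta,\alpha)$. Hence $\ot(c_\alpha)=\omega_2\notin M\cap\omega_2$, so the inequality $\ot(c_\beta)\le\ot(c_\alpha)$ does not by itself give the bound. One must add that $\ot(c_\alpha\cap\gamma)\in M\cap\omega_2$ for the $\gamma\in c_\alpha\cap M$, which are cofinal in $\beta$. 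That repair is exactly your final supremum step in another guise.
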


\begin{proof}
	First, assume that $\beta = \sup(M)$. 
	Then $\lim(c_\beta) \cap M$ is cofinal in $\beta$ by Lemma \ref{limits cofinal}. 
	By the coherence of the square sequence and elementarity, 
	this easily implies that 
	for cofinally many $\gamma \in c_\beta$, 
	$\ot(c_\beta \cap \gamma) \in M \cap \omega_2$, 
	which in turn implies that $\ot(c_\beta) \le M \cap \omega_2$. 
	Secondly, assume that $\beta < \sup(M)$. 
	Let $\alpha = \min((M \cap \omega_3) \setminus \beta)$. 
	Then by elementarity, $\beta$ is a limit point of $c_\alpha$, so 
	$c_\beta = c_\alpha \cap \beta$. 
	Hence, $\ot(c_\beta) \le \ot(c_\alpha) \in M \cap \omega_2$.
\end{proof}

\begin{defn}
	Define $\Ycal^+$ to be the set of all $N \in \Ycal$ such that 
	$\ot(c_{\sup(N)}) = N \cap \omega_2$.
\end{defn}

\begin{prop}
	The families $\Ycal$ and $\Ycal^+$ are stationary subsets of $[H(\omega_3)]^{\omega_1}$.
\end{prop}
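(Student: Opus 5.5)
Since $\Ycal^+ \subseteq \Ycal$, it suffices to prove that $\Ycal^+$ is stationary in $[H(\omega_3)]^{\omega_1}$. Stationarity of $\Ycal$ then follows at once (it is also standard under \textsf{CH}). So fix a function $F : [H(\omega_3)]^{<\omega} \to H(\omega_3)$. We will find $N \in \Ycal^+$ closed under $F$. Expand $\Acal$ to $\Acal_F = (H(\omega_3),\in,f,\vec c,\vec A,F)$. Since $\Acal_F$ has definable Skolem functions, for every $\delta<\omega_3$ of cofinality $\omega_2$ and every $\mu<\omega_2$ we may form Skolem hulls.

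The plan is to build a continuous $\in$-chain $\langle N_i : i < \omega_2\rangle$ of elementary substructures of $\Acal_F$. Each $N_i$ has size $\omega_1$, we have $N_i \cap \omega_2 \in \omega_2$, and $N_{i+1}$ is countably closed. Using \textsf{CH}, this is the usual construction: close under countable sequences $\omega_1$ times at successor steps, and take unions at limits. Put $\delta_i = \sup(N_i)$. The map $i \mapsto \delta_i$ is continuous and increasing, so $\delta = \sup_i \delta_i$ has cofinality $\omega_2$. Hence $c_\delta$ is club in $\delta$ of order type exactly $\omega_2$. Also $i\mapsto N_i\cap\omega_2$ is continuous and increasing.

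Let $D=\{\delta_i : i<\omega_2\}$. This set is club in $\delta$, and $\lim(c_\delta)$ is club in $\delta$, so the set of $i$ with $\delta_i \in \lim(c_\delta)$ contains a club. On that club, the coherence $c_{\delta_i} = c_\delta \cap \delta_i$ gives $\ot(c_{\delta_i}) = \ot(c_\delta\cap\delta_i)$. The function $i \mapsto \ot(c_\delta\cap \delta_i)$ is continuous and nondecreasing on that club, with values in $\omega_2$ cofinal in $\omega_2$. The same holds for $i \mapsto N_i\cap\omega_2$. So there is a club $E\subseteq \omega_2$ of $i$ on which both functions equal $i$ itself. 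Since $\omega_2$ is regular, the set of closure points of both continuous functions is club.

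Choose $i \in E$ with $\cf(i)=\omega_1$. Then $N_i$ is countably closed: any countable subset lies in some $N_j$ with $j<i$, hence is an element of $N_{j+1}\subseteq N_i$. So $N_i \in \Ycal$ and $N_i$ is closed under $F$. Moreover $\ot(c_{\sup(N_i)}) = i = N_i\cap\omega_2$, so $N_i \in \Ycal^+$.

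The main obstacle is making sure the order-type function really lines up with $N_i\cap\omega_2$ at some point of cofinality $\omega_1$. This is why we use $\delta_i \in \lim(c_\delta)$ together with the upper bound from Lemma \ref{limit order type}. That lemma already gives $\ot(c_{\delta_i}) \le N_i\cap\omega_2$, and the club argument supplies equality. It is also why we need $\cf(\delta)=\omega_2$, so that $c_\delta$ has full order type $\omega_2$.
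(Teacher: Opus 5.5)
Your proof is correct, and it reaches the key equality $\ot(c_{\sup(N)}) = N\cap\omega_2$ by a genuinely different mechanism from the paper's.

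The paper first fixes a single $L \prec \Acal$ of size $\omega_2$ with $\delta = \sup(L)$ of cofinality $\omega_2$. It then builds only an $\omega_1$-length chain $\langle M_\alpha : \alpha<\omega_1\rangle$ inside $L$. At each successor step it picks $\gamma_\alpha > \sup(M_\alpha)$ in $\lim(c_\delta)$ with $\ot(c_{\gamma_\alpha}) > M_\alpha\cap\omega_2$, and puts both $\gamma_\alpha$ and $\ot(c_{\gamma_\alpha})$ into $M_{\alpha+1}$. This gives $\theta=\sup(M)\in\lim(c_\delta)$ and $\ot(c_\theta)\ge M\cap\omega_2$ by hand. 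The reverse inequality comes from Lemma~\ref{limit order type}.

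You instead let the $\omega_2$-chain itself play the role of $L$. You get both inequalities at once by matching the two continuous, nondecreasing, unbounded functions $i\mapsto \ot(c_\delta\cap\delta_i)$ and $i\mapsto N_i\cap\omega_2$ on a club of common fixed points.

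Your route has two advantages. It needs neither Lemma~\ref{limit order type} nor the choice of the $\gamma_\alpha$; your closing remark crediting that lemma with the upper bound is superfluous, since the club argument already yields equality. It also shows more: $N_i\in\Ycal^+$ for every $i$ of cofinality $\omega_1$ in a club. The paper's route verifies the model by reusing Lemma~\ref{limit order type}, which it needs anyway to prove that $M\cap N\in N$.

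One step should be tightened. The function $g(i)=\ot(c_\delta\cap\delta_i)$ need not be strictly increasing, because $c_\delta$ may miss $[\delta_i,\delta_{i+1})$. So being a closure point of $g$ in the naive sense only yields $g(i)\le i$ at limits. The fixed-point set is still club: choose $k_{n+1}>k_n$ with $g(k_n)<k_{n+1}$ and $g(k_{n+1})>k_n$, and let $i=\sup_n k_n$. Then continuity and monotonicity give $g(i)=i$.
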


\begin{proof}
	Since $\Ycal^+ \subseteq \Ycal$, it suffices to show that 
	$\Ycal^+$ is stationary. 
	Let $H : H(\omega_3)^{<\omega} \to H(\omega_3)$ be a given function. 
	Fix $L \prec \mathcal A$ which is closed under $H$ and 
	satisfies that $|L| = \omega_2$, $\omega_2 \subseteq L$, 
	$L^\omega \subseteq L$, and $L \cap \omega_3$ has cofinality $\omega_2$ 
	(using \textsf{CH}, it is easy to show that such a set exists, using an argument 
	similar to that in the next paragraph). 
	Let $\delta = L \cap \omega_3$. 
	Note that $c_\delta$ has order type $\omega_2$.
	
	Define by recursion an increasing and continuous sequence 
	$\langle M_\alpha : \alpha < \omega_1 \rangle$ 
	of subsets of $L$ of size $\omega_1$ as follows. 
	Let $M_0$ be an elementary substructure of $L$ 
	of size $\omega_1$ such that $\omega_1 \subseteq M_0$. 
	At limit stages take unions. 
	Suppose that $\alpha < \omega_1$ and $M_\alpha$ is defined and is an 
	elementary substructure of $L$ of size $\omega_1$. 
	Then $\sup(M_\alpha) < \delta$. 
	Since $\ot(c_\delta) = \omega_2$, we can 
	pick $\gamma_\alpha > \sup(M_\alpha)$ which is a limit point of $c_\delta$ 
	with cofinality $\omega_1$ satisfying that 
	$\ot(c_{\gamma_\alpha}) > M_\alpha \cap \omega_2$. 
	Now let $M_{\alpha+1}$ be the closure under the function $H$ and the definable 
	Skolem functions of $\mathcal A$ 
	of the set $M_\alpha \cup M_\alpha^\omega \cup \{ \gamma_\alpha, \ot(c_{\gamma_\alpha}) \}$. 
	Note that since $M_\alpha \subseteq L$ and $L^\omega \subseteq L$, 
	$M_{\alpha+1}$ is an elementary substructure of $L$ of size $\omega_1$. 
	This completes the construction.

	Let $M = \bigcup \{ M_\alpha : \alpha < \omega_1 \}$. 
	Then $M$ has size $\omega_1$, and by construction, 
	$M^\omega \subseteq M$, $M \prec \Acal$, and $M$ is closed under $H$. 
	So $M \in \Ycal$. 
	Let $\theta = \sup(M)$. 
	By construction, $\theta$ is a limit point of $c_\delta$, so 
	$c_\theta = c_\delta \cap \theta$. 
	We claim that $\ot(c_\theta) = M \cap \omega_2$, and hence $M \in \Ycal^+$. 
	By Lemma \ref{limit order type}, $\ot(c_\theta) \le M \cap \omega_2$. 
	On the other hand, for all $\beta < M \cap \omega_2$ there exists $\alpha < \omega_1$ 
	such that $\beta < M_\alpha \cap \omega_2 < 
	\ot(c_{\gamma_\alpha}) < M_{\alpha+1} \cap \omega_2 < M \cap \omega_2$. 
	But $c_{\gamma_\alpha}$ is a limit point of $c_\delta$ which is less than $\theta$, 
	and hence is a limit point 
	of $c_\delta \cap \theta = c_\theta$. 
	Therefore, $\beta < \ot(c_{\gamma_\alpha}) < \ot(c_\theta)$. 
	So $M \cap \omega_2 = \ot(c_\theta)$.
\end{proof}

\begin{lemma} \label{expressing members in Ycal}
	Let $M \in \Ycal$ and let $\alpha = \sup(M)$. 
	Then $M \cap \omega_3 = \{ g_\alpha(\xi,\gamma) : 
	\xi, \gamma \in M \cap \omega_2 \}$.
\end{lemma}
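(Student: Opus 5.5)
The plan is to prove the two inclusions separately. In both directions the key is Lemma \ref{limits cofinal}, which gives that $\lim(c_\alpha) \cap M$ is cofinal in $\alpha = \sup(M)$. For every $\beta \in \lim(c_\alpha)$ we have $A_{\beta,\xi} = A_{\alpha,\xi} \cap \beta$. So $A_{\beta,\xi}$ is an initial segment of $A_{\alpha,\xi}$, and $a_{\beta,\xi,\gamma} = a_{\alpha,\xi,\gamma}$ whenever $\gamma < \ot(A_{\beta,\xi})$. This lets us replace the index $\alpha$, which is not in $M$, by indices $\beta \in M$. For such $\beta$, elementarity of $M$ in $\Acal$ applies, since $g_\beta$ is definable in $\Acal$ from $\beta$.

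For $\subseteq$, let $\delta \in M \cap \omega_3$.
\begin{enumerate}
	\item Choose $\beta \in \lim(c_\alpha) \cap M$ with $\delta < \beta$.
	\item Since $\bigcup_{\xi < \omega_2} A_{\beta,\xi} = \beta$, there is a least $\xi < \omega_2$ with $\delta \in A_{\beta,\xi}$. This $\xi$ is definable in $\Acal$ from $\beta$ and $\delta$, so $\xi \in M$.
	\item Let $\gamma = \ot(A_{\beta,\xi} \cap \delta)$. It is definable from $\beta, \xi, \delta$, so $\gamma \in M$. Also $\gamma < \omega_2$, because $|A_{\beta,\xi}| < \omega_2$.
	\item Then $\delta = a_{\beta,\xi,\gamma} = a_{\alpha,\xi,\gamma} = g_\alpha(\xi,\gamma)$, with $\xi, \gamma \in M \cap \omega_2$.
\end{enumerate}

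For $\supseteq$, let $\xi, \gamma \in M \cap \omega_2$. If $\ot(A_{\alpha,\xi}) \le \gamma$, then $g_\alpha(\xi,\gamma) = 0 \in M$. Otherwise $a_{\alpha,\xi,\gamma}$ is defined and lies below $\alpha$. Pick $\beta \in \lim(c_\alpha) \cap M$ with $a_{\alpha,\xi,\gamma} < \beta$.
\begin{enumerate}
	\item Since $A_{\beta,\xi}$ is the initial segment $A_{\alpha,\xi} \cap \beta$ and contains $a_{\alpha,\xi,\gamma}$, we get $\ot(A_{\beta,\xi}) > \gamma$.
	\item Hence $g_\alpha(\xi,\gamma) = a_{\alpha,\xi,\gamma} = a_{\beta,\xi,\gamma} = g_\beta(\xi,\gamma)$.
	\item The right-hand side lies in $M$ by elementarity, since $\beta, \xi, \gamma \in M$ and $g_\beta$ is definable from $\beta$.
\end{enumerate}

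I expect no real obstacle. The only point needing care is the coherence step: every element of $A_{\alpha,\xi}$ is captured below some $\beta \in \lim(c_\alpha) \cap M$, and its index in $A_{\beta,\xi}$ equals its index in $A_{\alpha,\xi}$. Both facts follow from $A_{\beta,\xi} = A_{\alpha,\xi} \cap \beta$ being an initial segment, together with the cofinality supplied by Lemma \ref{limits cofinal}.
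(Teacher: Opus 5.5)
Your proof is correct and follows the paper's argument essentially step for step. In both directions you use Lemma \ref{limits cofinal} to pick $\beta \in \lim(c_\alpha) \cap M$ above the relevant ordinal, transfer between $a_{\beta,\xi,\gamma}$ and $a_{\alpha,\xi,\gamma}$ via the initial-segment identity $A_{\beta,\xi} = A_{\alpha,\xi} \cap \beta$, and apply elementarity of $M$ at $\beta$. You add a few small justifications the paper leaves implicit, namely taking the least $\xi$ for definability, $\gamma < \omega_2$, and $\ot(A_{\beta,\xi}) > \gamma$.
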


\begin{proof}
	($\subseteq$) Consider $\tau \in M \cap \omega_3$. 
	By Lemma \ref{limits cofinal}, 
	fix $\beta \in \lim(c_\alpha) \cap M$ greater than $\tau$ . 
	Then by elementarity, there is $\xi \in M \cap \omega_2$ 
	such that $\tau \in A_{\beta,\xi}$. 
	Let $\gamma = \ot(A_{\beta,\xi} \cap \tau)$. 
	Then $\gamma \in M$ and $\tau = a_{\beta,\xi,\gamma}$. 
	But $A_{\beta,\xi} = A_{\alpha,\xi} \cap \beta$, so 
	$\tau = a_{\beta,\xi,\gamma} = a_{\alpha,\xi,\gamma} = g_\alpha(\xi,\gamma)$. 

	($\supseteq$) 
	Suppose that $\xi, \gamma \in M \cap \omega_2$ and we show that 
	$g_{\alpha}(\xi,\gamma) \in M$. 
	If $\gamma \ge \ot(A_{\alpha,\xi})$, then $g_\alpha(\xi,\gamma) = 0 \in M$. 
	Assume that $\gamma < \ot(A_{\alpha,\xi})$. 
	Then $g_\alpha(\xi,\gamma) = a_{\alpha,\xi,\gamma} < \alpha$. 
	Since $\lim(c_\alpha) \cap M$ is cofinal in $\alpha$ by Lemma \ref{limits cofinal}, 
	we can fix 
	$\beta \in \lim(c_\alpha) \cap M$ which is greater than $a_{\alpha,\xi,\gamma}$. 
	Then $A_{\beta,\xi} = A_{\alpha,\xi} \cap \beta$. 
	So $a_{\alpha,\xi,\gamma} = a_{\beta,\xi,\gamma}$, and by 
	elementarity, $a_{\beta,\xi,\gamma} \in M$. 
	
\end{proof}

\begin{lemma} \label{g equal}
	Suppose that $M \in \Ycal$, $\beta \in \text{lim}(M \cap \omega_3)$, 
	$\beta < \sup(M)$, and 
	$\alpha = \min((M \cap \omega_3) \setminus \beta)$. 
	Then for all $\xi, \gamma < M \cap \omega_2$, 
	$g_\beta(\xi,\gamma) = g_\alpha(\xi,\gamma)$. 
\end{lemma}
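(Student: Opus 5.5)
Since $\beta < \sup(M)$, the ordinal $\alpha = \min((M \cap \omega_3) \setminus \beta)$ exists, and it lies in $M$. The plan is to show that $\beta$ is a limit point of $c_\alpha$, so that $A_{\beta,\xi} = A_{\alpha,\xi} \cap \beta$ for every $\xi$. We then compare $\ot(A_{\beta,\xi})$ with $\gamma$ to conclude.

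\emph{Step 1: $\beta \in \lim(c_\alpha)$.} This is the elementarity fact already used in the proof of Lemma \ref{limit order type}. If $\beta \in M$ then $\alpha = \beta$, and the lemma is trivial. Otherwise $\beta \notin M$, and $\alpha > \beta$ is a limit ordinal, since $\beta \in \lim(M \cap \omega_3)$ and $M$ is closed under successor. Let $\tau \in M \cap \beta$. Then $c_\alpha \in M$, so by elementarity $\min(c_\alpha \setminus \tau) \in M$. This element lies in $[\tau, \alpha)$, and since no element of $M$ lies in $[\beta,\alpha)$, it is below $\beta$. Hence $c_\alpha \cap \beta$ is cofinal in $\beta$, which means $\beta \in \lim(c_\alpha)$. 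By the lemma on $\vec A$, $A_{\beta,\xi} = A_{\alpha,\xi} \cap \beta$ for all $\xi < \omega_2$.

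\emph{Step 2: comparing enumerations.} Fix $\xi, \gamma < M \cap \omega_2$. Since $A_{\beta,\xi}$ is an initial segment of $A_{\alpha,\xi}$, for $\gamma < \ot(A_{\beta,\xi})$ we get $a_{\beta,\xi,\gamma} = a_{\alpha,\xi,\gamma}$, and therefore $g_\beta(\xi,\gamma) = g_\alpha(\xi,\gamma)$. It remains to handle $\gamma \ge \ot(A_{\beta,\xi})$, where $g_\beta(\xi,\gamma) = 0$. For this case we must show that $\ot(A_{\alpha,\xi}) \le \gamma$, or that $a_{\alpha,\xi,\gamma} = 0$. This is the main point, and the goal is to prove $A_{\alpha,\xi} \cap [\beta,\alpha) = \emptyset$ whenever $\xi \in M$.

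Suppose instead that some $\tau \in A_{\alpha,\xi}$ satisfies $\beta \le \tau < \alpha$. I plan to derive a contradiction by using the fact that $A_{\alpha,\xi}$ is a set of size less than $\omega_2$ belonging to $M$. Because $M$ is countably closed, we know $\omega_1 \subseteq M$. Because $M \prec \mathcal A$, $M$ contains a bijection between $A_{\alpha,\xi}$ and its order type, which is less than $\omega_2$. Then $A_{\alpha,\xi} \cap M$ is exactly the image of $\ot(A_{\alpha,\xi}) \cap M$ under that bijection.

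The difficulty is that $M \cap \omega_2$ need not contain the whole of $\ot(A_{\alpha,\xi})$. To get around this I use the enumeration directly, which gives a cleaner route. First, $\ot(A_{\beta,\xi}) \ge M \cap \omega_2$: every element of $A_{\alpha,\xi} \cap M$ below $\beta$ has index $\ot(A_{\alpha,\xi} \cap \tau) \in M$. Then we show that the indices in $M \cap \omega_2$ reach beyond $\gamma$. For every $\gamma' \in M \cap \omega_2$ with $\gamma' < \ot(A_{\alpha,\xi})$, elementarity gives $a_{\alpha,\xi,\gamma'} \in M$. Since $a_{\alpha,\xi,\gamma'} < \alpha$ and it lies in $M$, we get $a_{\alpha,\xi,\gamma'} < \beta$. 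Hence every $\gamma' < M \cap \omega_2$ with $\gamma' < \ot(A_{\alpha,\xi})$ satisfies $\gamma' < \ot(A_{\beta,\xi})$, because $A_{\beta,\xi}$ is an initial segment of $A_{\alpha,\xi}$.

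\emph{Conclusion.} Now take $\gamma < M \cap \omega_2$ with $\gamma \ge \ot(A_{\beta,\xi})$. If $\gamma < \ot(A_{\alpha,\xi})$, then $\gamma < \ot(A_{\beta,\xi})$ by the previous step, which is a contradiction. So $\gamma \ge \ot(A_{\alpha,\xi})$, and both $g$ values are $0$. The main obstacle is the elementarity bookkeeping in the case $\beta \notin M$.
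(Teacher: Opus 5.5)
Your argument is correct and is essentially the paper's proof. You obtain $\beta\in\lim(c_\alpha)$ by the same elementarity argument. The decisive step is also the paper's: for $\xi,\gamma\in M$ with $\gamma<\ot(A_{\alpha,\xi})$, the point $a_{\alpha,\xi,\gamma}$ lies in $M\cap\alpha=M\cap\beta$, hence in the initial segment $A_{\beta,\xi}$, so its index $\gamma$ is below $\ot(A_{\beta,\xi})$. The only difference is that you phrase the case split via $\ot(A_{\beta,\xi})$ rather than $\ot(A_{\alpha,\xi})$.

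You should, however, delete the sentence asserting $\ot(A_{\beta,\xi})\ge M\cap\omega_2$. It is false, though fortunately your conclusion never uses it. For $\xi\in M$, the ordinal $\ot(A_{\alpha,\xi})$ is definable from $\alpha$ and $\xi$ and is below $\omega_2$, so it belongs to the ordinal $M\cap\omega_2$. Hence $\ot(A_{\beta,\xi})\le\ot(A_{\alpha,\xi})<M\cap\omega_2$.

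The same observation shows that the ``difficulty'' you raise is not real, and your abandoned plan works at once. The order isomorphism from $\ot(A_{\alpha,\xi})$ onto $A_{\alpha,\xi}$ lies in $M$, and its domain is a subset of $M$. Therefore $A_{\alpha,\xi}\subseteq M\cap\alpha\subseteq\beta$, so $A_{\beta,\xi}=A_{\alpha,\xi}$. This gives $g_\beta(\xi,\gamma)=g_\alpha(\xi,\gamma)$ for every $\gamma<\omega_2$, not only for $\gamma\in M$.

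A minor point: the fact that $\alpha$ is a limit ordinal uses closure of $M$ under predecessors, not successors.
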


\begin{proof}
	The statement is immediate if $\beta = \alpha$, so assume that $\beta < \alpha$. 
	Then an easy argument by elementarity shows that $\beta \in \text{lim}(c_\alpha)$. 
	Consequently, for all $\xi < \omega_2$, 
	$A_{\beta,\xi} = A_{\alpha,\xi} \cap \beta$, 
	and hence $\ot(A_{\beta,\xi}) \le \ot(A_{\alpha,\xi})$.  
	Now consider $\xi, \gamma < M \cap \omega_2$. 
	If $\gamma \ge \ot(A_{\alpha,\xi})$, then also $\gamma \ge \ot(A_{\beta,\xi})$, 
	so both $g_{\beta}(\xi,\gamma)$ and $g_\alpha(\xi,\gamma)$ are equal to $0$. 
	Suppose that $\gamma < \ot(A_{\alpha,\xi})$. 
	Since $\xi, \gamma \in M$, 
	$g_{\alpha}(\xi,\gamma) = a_{\alpha,\xi,\gamma} \in M \cap \alpha = M \cap \beta$. 
	So $a_{\alpha,\xi,\gamma} \in A_{\alpha,\xi} \cap \beta = A_{\beta,\xi}$. 
	As $A_{\beta,\xi}$ is an initial segment of $A_{\alpha,\xi}$, 
	$a_{\alpha,\xi,\gamma} = a_{\beta,\xi,\gamma}$, that is, 
	$g_{\alpha}(\xi,\gamma) = g_\beta(\xi,\gamma)$.
\end{proof}

\begin{prop}
	Let $M \in \Ycal$, $N \in \Ycal^+$, and suppose that 
	$M \cap \omega_2 < N \cap \omega_2$. 
	Then $M \cap N \in N$.
\end{prop}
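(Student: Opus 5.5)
The plan is to reduce the statement to showing that $M \cap N \cap \omega_3 \in N$, and then to compute that set inside $N$ using Lemma \ref{expressing members in Ycal} and Lemma \ref{g equal}. Let $\delta = M \cap \omega_2$. Since $\omega_1 \subseteq N$ and $N \cap \omega_2$ is an ordinal, we have $\delta \in N$ and $M \cap N \cap \omega_2 = \delta$.

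\emph{Reduction to ordinals.} Since the bijection $f : \omega_3 \to H(\omega_3)$ is part of $\Acal$, we have $M \cap N = f[M \cap N \cap \omega_3]$. So once $M \cap N \cap \omega_3 \in N$, elementarity of $N$ gives $M \cap N \in N$.

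\emph{Expressing the set of ordinals.} By the remark after the definition of $\Ycal$, we have $M \cap N \in \Ycal$. Let $\alpha = \sup(M \cap N)$. By Lemma \ref{expressing members in Ycal} applied to $M \cap N$,
$$
M \cap N \cap \omega_3 = \{ g_\alpha(\xi,\gamma) : \xi,\gamma < \delta \}.
$$
It therefore suffices to find $\alpha' \in N$ such that $g_{\alpha'}$ agrees with $g_\alpha$ on $\delta \times \delta$. Then the set is $g_{\alpha'}[\delta \times \delta]$, which is definable in $\Acal$ from the parameters $\alpha', \delta \in N$, and hence lies in $N$.

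\emph{Finding $\alpha'$.} This is the key step. Clearly $\alpha \le \sup(N)$. I claim that $\alpha < \sup(N)$, and this is where $N \in \Ycal^+$ is used. Apply Lemma \ref{limit order type} to $M \cap N$ with $\beta = \alpha = \sup(M \cap N)$; this gives $\ot(c_\alpha) \le \delta$. On the other hand, $\ot(c_{\sup(N)}) = N \cap \omega_2 > \delta$ by the definition of $\Ycal^+$. Hence $\alpha \ne \sup(N)$.

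Now $\alpha$ is a limit point of $M \cap N \cap \omega_3$, and so a limit of $N \cap \omega_3$. Indeed, $M \cap N \cap \omega_3$ has no maximum, by elementarity. Let $\alpha' = \min((N \cap \omega_3) \setminus \alpha)$. Lemma \ref{g equal}, applied to $N$, $\beta = \alpha$ and this $\alpha'$, yields $g_\alpha(\xi,\gamma) = g_{\alpha'}(\xi,\gamma)$ for all $\xi,\gamma < N \cap \omega_2$, and in particular for all $\xi, \gamma < \delta$. This completes the argument.

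The main obstacle is ruling out $\alpha = \sup(N)$. Without that, there need not be any ordinal of $N$ coding $g_\alpha \res (\delta \times \delta)$. The order-type comparison above is exactly what the definition of $\Ycal^+$ is designed to provide.
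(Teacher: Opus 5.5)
Your proof is correct and follows the paper's argument essentially step for step. Both reduce to showing $M \cap N \cap \omega_3 \in N$ via the bijection $f$, both rule out $\sup(M \cap N) = \sup(N)$ by playing $\ot(c_{\sup(M\cap N)}) \le M \cap \omega_2$ against the $\Ycal^+$ condition $\ot(c_{\sup(N)}) = N \cap \omega_2$, and both then combine Lemma \ref{expressing members in Ycal} with Lemma \ref{g equal} to replace $g_{\sup(M \cap N)}$ by $g_{\alpha'}$, where $\alpha' = \min((N \cap \omega_3) \setminus \sup(M \cap N)) \in N$. The differences are cosmetic: you apply Lemma \ref{limit order type} to $M \cap N$ rather than to $M$, and you spell out that $\sup(M \cap N)$ is a limit point of $N \cap \omega_3$ (needed for Lemma \ref{g equal}), which the paper leaves implicit.
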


\begin{proof}
	Since $M \cap N$ is an elementary substructure of $\Acal$, 
	$M \cap N = f[M \cap N \cap \omega_3]$. 
	So by the elementarity of $N$ in $\Acal$, 
	it suffices to show that $M \cap N \cap \omega_3 \in N$. 
	Let $\beta = \sup(M \cap N)$. 
	We claim that $N \setminus \beta$ is non-empty. 
	Otherwise, $\sup(N) = \beta$ and so 
	$\ot(c_{\beta}) = N \cap \omega_2$. 
	On the other hand, $\beta$ is a limit point of $M \cap \omega_3$ and hence 
	by Lemma \ref{limit order type}, 
	$\ot(c_\beta) \le M \cap \omega_2 < N \cap \omega_2$, which is a contradiction. 
	Let $\alpha = \min((N \cap \omega_3) \setminus \beta)$. 
	Since $M \cap N \in \Ycal$, by Lemma \ref{expressing members in Ycal} we have that 
	$M \cap N \cap \omega_3 = 
	\{ g_{\beta}(\xi,\gamma) : \xi, \gamma \in M \cap N \cap \omega_2 \}$. 
	By Lemma \ref{g equal} 
	and the fact that $M \cap N \cap \omega_2 = M \cap \omega_2$, it follows that 
	$M \cap N \cap \omega_3 = 
	\{ g_{\alpha}(\xi,\gamma) : \xi, \gamma \in M \cap \omega_2 \}$. 
	As $\alpha$ and $M \cap \omega_2$ are in $N$, so is $M \cap N \cap \omega_3$.
\end{proof}

This completes the proof of Theorem \ref{Ycal}.

\section{The Definition of the Forcing Iteration} \label{The Definition of the Forcing Iteration}

For the remainder of the article, assume that \textsf{GCH} holds and there exist 
families $\Ycal$ and $\Ycal^+$ as described in Theorem \ref{Ycal}. 
For example, these statements hold if $V = L$.

We define by recursion a sequence of posets 
$\langle \p_\delta : \delta \le \Delta \rangle$, where $\Delta \le \omega_3$ 
is some ordinal whose value is to be determined. 
Our goal is to show that the recursion does not terminate, and in that case, 
$\Delta = \omega_3$ and $\p_{\omega_3}$ preserves all cardinals. 
Until we finish our work, we cannot conclude that the recursion does not 
fail at some successor ordinal $\delta+1$, where $\delta < \omega_3$, 
and if so, then $\Delta = \delta$. 
Roughly speaking, the forcing iteration begins with the forcing $\C$ to 
produce a generic sequence $\vec \pi$ of bijections, and then 
iterates forcings of the form $\B(\vec \pi,f)$, bookkeeping to handle 
every injective function $f$ from $\omega_1$ to $\omega_1$.

Assuming that the recursion succeeds, we achieve our objective as follows. 
Each $\p_\delta$ has size at most $\omega_3$ and is $\omega_3$-c.c., and 
$\p_{\omega_3} = \bigcup \{ \p_\delta : \delta < \omega_3 \}$. 
Consequently, any nice $\p_{\omega_3}$-name for a subset of $\omega_1$ 
is a nice $\p_\delta$-name for some $\delta < \omega_3$. 
By straightforward bookkeeping, we can arrange that for every injective function 
$f$ from $\omega_1$ to $\omega_1$ in the final generic extension, there is some 
stage of the iteration at which we forced with $\B(\vec \pi,f)$. 
We also need to show that each $\B(\vec \pi,f)$ satisfies the assumptions of 
Lemma \ref{K forces (B)}, and it then follows that 
$\p_{\omega_3}$ forces that $(B)$ holds.

We use the following natural abbreviations associated with these posets, for 
each $\delta \le \Delta$:
\begin{itemize}
	\item the order on $\p_\delta$ is written as $\le_\delta$;
	\item we write $\dot G_\delta$ for the canonical $\p_\delta$-name for a generic filter 
	on $\p_\delta$;
	\item we write the forcing relation for $\p_\delta$ as $\Vdash_\delta$.
\end{itemize}

Alongside the construction of the sequence 
$\langle \p_\delta : \delta \le \Delta \rangle$, we also specify sequences 
$\langle \dot f_\tau : \tau < \Delta \rangle$ and 
$\langle \dot \q_\tau : \tau < \Delta \rangle$.

\bigskip

\textbf{Base case:}

\bigskip

Let $\p_0$ be the forcing whose conditions are ordered triples 
$(c,\emptyset,\emptyset)$, where $c \in \C$. 
Define $(d,\emptyset,\emptyset) \le (c,\emptyset,\emptyset)$ 
if $c \subseteq d$. 
Note that $\p_0$ is isomorphic to $\C$.

\bigskip

Having defined $\p_0$, we fix some notation. 
Suppose that $G$ is a generic filter on $\p_0$. 
For each $\omega_1 \le \beta < \omega_2$, define 
$\pi_\beta : \beta \to \omega_1$ by letting 
$\pi_\beta(\gamma) = c(\beta,\gamma)$ for some (any) 
$(c,\emptyset,\emptyset) \in G$ such that $(\beta,\gamma) \in \dom(c)$. 
By Lemma \ref{generic bijections}, each $\pi_\beta$ is a bijection 
of $\beta$ onto $\omega_1$. 
We write $\dot \pi_\beta$ as a $\p_0$-name for this function for each 
$\omega_1 \le \beta < \omega_2$, 
and let $\dot{\vec \pi}$ be a $\p_0$-name for the sequence 
$\langle \dot \pi_\beta : \omega_1 \le \beta < \omega_2 \rangle$.

\bigskip

\textbf{Successor case:}

\bigskip

Assume that $\delta < \omega_3$ 
and $\p_\xi$ is defined for all $\xi \le \delta$. 
For the recursion to continue, we assume inductively that 
$\p_0$ is a regular suborder of $\p_\delta$ and 
$\p_\delta$ preserves all cardinals. 
If these assumptions fail, then the recursion ends and $\p_{\delta+1}$ is not defined. 
Fix a $\p_\delta$-name $\dot f_\delta$ for a function from $\omega_1$ to $\omega_1$ and let 
$\dot \q_\delta$ be a $\p_\delta$-name for 
the forcing $\B(\dot{\vec \pi},\dot f_\delta)$.

A condition in $\p_{\delta+1}$ is any ordered triple $p = (c_p,s_p,A_p)$ satisfying:
\begin{enumerate}
	\item $s_p$ is a function whose domain is a countable subset of $\delta+1$;
	\item $A_p$ is a countable set of pairs of the form $(\xi,M)$, where 
	$\xi \le \delta$, $\xi$ is a limit ordinal, and $M \in \Ycal$;
	\item the ordered triple 
	$$
	p \res \delta = 
	( c_p, s_p \res \delta, A_p )
	$$
	is a member of $\p_\delta$;\footnote{For clarity, we remark that we do not need 
	to restrict $A_p$ to $\delta$ since by (2), $(\xi,M) \in A_p$ implies that 
	$\xi < \delta+1$.}
	\item if $\delta \in \dom(s_p)$, then 
	$s_p(\delta)$ is an ordered pair $(h_{p,\delta},I_{p,\delta})$ such that:
		\begin{enumerate}
			\item $h_{p,\delta}$ is an injective function whose domain is a countable 
			subset of $\omega_2$ which maps into $\omega_2$ and 
			satisfies that $h_{p,\delta}(\alpha) \ne \alpha$ for all 
			$\alpha \in \dom(h_{p,\delta})$;
			\item $I_{p,\delta}$ is a countable subset of $S^2_1$;
			\item for all $\beta \in I_{p,\delta}$, $\beta$ is closed 
			under $h_{p,\delta}$ and $h_{p,\delta}^{-1}$.
		\end{enumerate}
\end{enumerate}
Let $q \le_{\delta+1} p$ if:
\begin{enumerate}
	\item[(a)] $A_p \subseteq A_q$;
	\item[(b)] $q \res \delta \le_{\delta} p \res \delta$;
	\item[(c)] if $\delta \in \dom(s_p)$, then $\delta \in \dom(s_q)$ 
	and $q \res \delta \Vdash_{\delta} 
	s_q(\delta) \le_{\dot \q_\delta} s_p(\delta)$.
\end{enumerate}

Note that for any $p \in \p_{\delta+1}$ with $\delta \in \dom(s_p)$, 
$\Vdash_\delta s_p(\delta) \in \B(\dot{\vec \pi},\dot f_\delta)$.
			
\bigskip

\textbf{Limit case:}

\bigskip

Assume that $\delta \le \omega_3$ is a limit ordinal 
and $\p_\xi$ is defined for all $\xi < \delta$. 
Since the recursion has continued through all ordinals less than $\delta$, 
it follows that for every $\xi < \delta$, $\p_\xi$ preserves all cardinals.

Define $\p_{\delta}$ as follows. 
A condition in $\p_\delta$ is any ordered triple $p = (c_p,s_p,A_p)$ satisfying:
\begin{enumerate}
	\item $A_p$ is a countable set of pairs of the form $(\sigma,M)$, where 
	$\sigma \le \delta$, $\sigma < \omega_3$, $\sigma$ is a limit ordinal, 
	and $M \in \Ycal$;
	\item $s_p$ is a function whose domain is a countable subset of $\delta$;
	\item for all $\xi < \delta$, the ordered triple 
	$$
	p \res \xi = 
	(c_p, s_p \res \xi, \{ (\sigma,M) \in A_p : \sigma \le \xi \})
	$$
	is a member of $\p_\xi$;
	\item for all $(\delta,M) \in A_p$ and for all 
	$\tau \in M \cap \dom(s_p) \cap \delta$, 
	$M \cap \omega_2 \in I_{p,\tau}$.
\end{enumerate}
Let $q \le p$ if:
\begin{enumerate}
	\item[(a)] $A_p \subseteq A_q$;
	\item[(b)] for all $\xi < \delta$, $q \res \xi \le_{\xi} p \res \xi$.
\end{enumerate}

\bigskip

This completes the definition of the forcing iteration.

\begin{defn}
	Let $\Delta$ be the least ordinal $\delta \le \omega_3$ such that 
	either $\delta = \omega_3$, or else $\delta < \omega_3$, $\p_\delta$ is defined, and 
	$\p_{\delta+1}$ is not defined.
\end{defn}

\begin{defn}
	Let $\xi < \delta \le \Delta$. 
	For any $p = (c_p,s_p,A_p)$ in $\p_\delta$, 
	define 
	$$
	p \res \xi = (c_p,s_p \res \xi,\{ (\sigma,M) \in A_p : \sigma \le \xi \}).
	$$
\end{defn}

In the next section, we prove that $\p_\xi$ is a regular suborder of $\p_\delta$ 
when $\xi < \delta \le \Delta$. 
We point out that the map defined above is only a reduction mapping 
when restricted to a dense subset of $\p_\delta$.

We give a useful non-inductive characterization of the forcing iteration, which can be 
proved with a routine argument by induction. 
Going forward 
we always refer to this lemma to verify membership in $\p_\delta$ and being 
related by $\le_\delta$, rather 
than using the recursive definition.

\begin{lemma} \label{non-inductive characterization}
	Let $\delta \le \Delta$. 
	Then $p \in \p_\delta$ iff $p = (c_p,s_p,A_p)$ is an ordered triple satisfying:
	\begin{enumerate}
		\item $c_p \in \C$;
		\item $s_p$ is a function whose domain is a countable subset of $\delta$;
		\item for all $\tau \in \dom(s_p)$, 
		$s_p(\tau)$ is an ordered pair $(h_{p,\tau},I_{p,\tau})$ such that: 
			\begin{enumerate}
			\item $h_{p,\tau}$ is an injective function whose domain is a countable 
			subset of $\omega_2$ which maps into $\omega_2$ and satisfies that 
			$h_{p,\tau}(\alpha) \ne \alpha$ for all $\alpha \in \dom(h_{p,\tau})$;
			\item $I_{p,\tau}$ is a countable subset of $S^2_1$;
			\item for all $\beta \in I_{p,\tau}$, $\beta$ is closed under 
			$h_{p,\tau}$ and $h_{p,\tau}^{-1}$;
		\end{enumerate}
		\item $A_p$ is a countable set of pairs of the form $(\xi,M)$, 
		where $\xi \le \delta$, $\xi < \omega_3$, $\xi$ is a limit ordinal, 
		and $M \in \Ycal$;
		\item for all $(\xi,M) \in A_p$ and for all 
		$\tau \in M \cap \dom(s_p) \cap \xi$, 
		$M \cap \omega_2 \in I_{p,\tau}$.
	\end{enumerate}
	For $p, q \in \p_\delta$, 
	$q \le_\delta p$ iff:
	\begin{enumerate}
		\item[(a)] $c_p \subseteq c_q$;
		\item[(b)] $A_p \subseteq A_q$;
		\item[(c)] $\dom(s_p) \subseteq \dom(s_q)$;
		\item[(d)] for all $\tau \in \dom(s_p)$, 
		$q \res \tau \Vdash_\tau s_q(\tau) \le_{\dot \q_\tau} s_p(\tau)$.
	\end{enumerate}
\end{lemma}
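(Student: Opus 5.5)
We argue by induction on $\delta \le \Delta$, proving both directions (membership and order) at once. For $\delta = 0$ both sides reduce to the description of $\p_0$: a condition is $(c,\emptyset,\emptyset)$ with $c \in \C$, and $s_p = \emptyset$, $A_p = \emptyset$ make clauses (2)--(5) and (b)--(d) vacuous. We may take the base case to force $A_p = \emptyset$ since there are no limit $\xi \le 0$; in fact $\xi = 0$ is excluded as a limit.

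\textbf{Successor step $\delta+1$.} Unfold the successor definition. A triple $p$ is in $\p_{\delta+1}$ iff $p \res \delta \in \p_\delta$, $\dom(s_p) \subseteq \delta+1$ is countable, every pair in $A_p$ has a limit first coordinate $\xi \le \delta$, and $s_p(\delta)$, when defined, satisfies (a)--(c) of (4). Apply the inductive hypothesis to $p \res \delta$. Clause (5) for $p$ then follows from clause (5) for $p \res \delta$: the only new index is $\tau = \delta$, and for $(\xi,M) \in A_p$ we have $\xi \le \delta$, so $\delta \notin \xi$. Also, since $\xi \le \delta$ is a limit, the $A$-component of $p\res\delta$ (which in the successor case is all of $A_p$) consists of pairs $(\xi,M)$ with $\xi \le \delta$. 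This matches clause (4) for $\delta$.

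For the order, $q \le_{\delta+1} p$ means $A_p \subseteq A_q$, $q\res\delta \le_\delta p\res\delta$, and the stage-$\delta$ clause. By induction this is equivalent to (a)--(d) over $\tau < \delta$, together with the extra coordinate $\tau = \delta$. Here $q \res \delta$ coincides with the restriction used in (d), so the conditions agree.

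\textbf{Limit step.} Membership requires $p \res \xi \in \p_\xi$ for all $\xi < \delta$, plus clause (4) of the limit definition for pairs with first coordinate $\delta$. Suppose $p$ satisfies (1)--(5) for $\delta$. Each $p\res\xi$ satisfies (1)--(5) for $\xi$, because restricting $s_p$ and discarding pairs with $\sigma > \xi$ preserves (5). By induction $p\res\xi \in \p_\xi$, and (5) applied to pairs $(\delta,M)$ gives clause (4).

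Conversely, suppose $p \in \p_\delta$. For a pair $(\sigma,M) \in A_p$ with $\sigma < \delta$ and $\tau \in M \cap \dom(s_p) \cap \sigma$, apply the hypothesis to $p\res\xi$ for any $\xi$ with $\sigma \le \xi < \delta$ and $\tau < \xi$, for instance $\xi = \sigma$. Clause (5) for $p \res \xi$ then yields the required instance of (5). Countability of $\dom(s_p)$ and the form of $s_p(\tau)$ are likewise read off from the restrictions.

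For the order, $q \le p$ iff $A_p \subseteq A_q$ and $q\res\xi \le_\xi p\res\xi$ for all $\xi < \delta$. By induction each of these is (a)--(d) relative to $\tau < \xi$. Taking the union over $\xi < \delta$ gives (a)--(d) for $\delta$, using that $(q\res\xi)\res\tau = q\res\tau$ for $\tau < \xi$.

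\textbf{Main obstacle.} The only delicate point is bookkeeping. We must check that restrictions compose, $(p\res\xi)\res\tau = p\res\tau$, which holds since the $A$-filter $\sigma \le \tau$ refines $\sigma \le \xi$. We must also check that the forcing relation $\Vdash_\tau$ in (d) refers to the same posets in both descriptions. Both are immediate from the definitions, so the whole argument is routine.
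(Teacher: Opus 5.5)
Your proof is correct and is the routine induction on $\delta$ that the paper has in mind: base, successor and limit cases, using $(q\res\xi)\res\tau = q\res\tau$ and the paper's implicit convention that $0$ is not a limit ordinal. One small correction: at a limit stage, countability of $\dom(s_p)$ is given directly by clause (2) of the limit definition. It cannot be read off from the restrictions, which would not suffice when $\cf(\delta) \ge \omega_1$.
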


\begin{defn}
	Let $\delta \le \Delta$. 
	For any $p \in \p_\delta$ and for any $\tau \in \dom(s_p)$, 
	write $s_p(\tau) = (h_{p,\tau},I_{p,\tau})$.
\end{defn}

\section{Basic Facts About the Iteration} \label{Basic Facts About the Iteration}

In this section, we establish some basic information about the forcing iteration 
just introduced.

\begin{lemma} \label{iteration is omega_1 closed}
	Let $\delta \le \Delta$. 
	Then $\p_\delta$ is $\omega_1$-closed. 
	In fact, suppose that $\langle p_n : n < \omega \rangle$ is a descending sequence of 
	conditions in $\p_\delta$. 
	Define $q = (c_q,s_q,A_q)$, where:
	\begin{itemize}
		\item $c_q = \bigcup_n c_{p_n}$;
		\item $A_q = \bigcup_n A_{p_n}$;
		\item $\dom(s_q) = \bigcup_n \dom(s_{p_n})$;
		\item for all $\tau \in \dom(s_q)$, 
		$s_q(\tau) = (h_{q,\tau},I_{q,\tau})$, where 
		$$
		h_{q,\tau} = \bigcup \{ h_{p_n,\tau} : n < \omega, \ \tau \in \dom(s_{p_n}) \}
		$$
		and 
		$$
		I_{q,\tau} = \bigcup \{ I_{p_n,\tau} : n < \omega, \ \tau \in \dom(s_{p_n}) \}.
		$$
	\end{itemize}
	Then $q \in \p_\delta$ and $q$ is the greatest lower bound of 
	$\langle p_n : n < \omega \rangle$.
\end{lemma}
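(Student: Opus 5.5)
The plan is to prove the statement by induction on $\delta \le \Delta$, using the non-inductive characterization in Lemma \ref{non-inductive characterization} both to verify that $q \in \p_\delta$ and to verify the order relations. The strengthened claim to carry through the induction is exactly the displayed one: the coordinatewise union $q$ is a condition and is the greatest lower bound. The induction is needed because clause (d) of the ordering refers to forcing statements over $\p_\tau$ for $\tau < \delta$, and handling these requires knowing the result for $\p_\tau$.

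\textbf{Membership.} First check that $q \in \p_\delta$.
\begin{itemize}
\item Clause (1) holds by Lemma \ref{C is omega_1 closed}.
\item Clauses (2) and (4) hold because countable unions of countable sets are countable, and every pair in $A_q$ already lies in some $A_{p_n}$.
\item For clause (3), fix $\tau \in \dom(s_q)$. The sequence $\langle s_{p_n}(\tau) : n \ge n_0\rangle$, with $n_0$ least such that $\tau\in\dom(s_{p_n})$, need not be literally increasing as a sequence of pairs in $V$, since the order is only forced. However, clause (a) of Definition \ref{definition of the main forcing}, and clause (b) there, are absolute statements about ground-model sets ($h_p \subseteq h_q$ and $I_p \subseteq I_q$). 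So $p_{n+1}\res\tau \Vdash_\tau s_{p_{n+1}}(\tau)\le s_{p_n}(\tau)$ gives $h_{p_n,\tau}\subseteq h_{p_{n+1},\tau}$ and $I_{p_n,\tau}\subseteq I_{p_{n+1},\tau}$ outright. This uses that $p_{n+1}\res\tau$ is a condition, so it forces something true in some extension. Hence $h_{q,\tau}$ is an increasing union of injective functions with no fixed points, $I_{q,\tau}\subseteq S^2_1$ is countable, and each $\beta\in I_{q,\tau}$ lies in some $I_{p_m,\tau}$. Since $\beta$ is closed under $h_{p_k,\tau}^{\pm1}$ for all $k\ge m$, it is closed under $h_{q,\tau}^{\pm1}$.
\item For clause (5), given $(\xi,M)\in A_q$ and $\tau\in M\cap\dom(s_q)\cap\xi$, choose $n$ large enough that $(\xi,M)\in A_{p_n}$ and $\tau\in\dom(s_{p_n})$. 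Then $M\cap\omega_2\in I_{p_n,\tau}\subseteq I_{q,\tau}$.
\end{itemize}

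\textbf{Lower bound.} Next show $q\le_\delta p_n$ for each $n$. Clauses (a)--(c) are immediate. For (d), fix $\tau\in\dom(s_{p_n})$. We need $q\res\tau\Vdash_\tau s_q(\tau)\le s_{p_n}(\tau)$. Note that $q\res\tau$ is exactly the coordinatewise union of the sequence $p_m\res\tau$, which is descending in $\p_\tau$. So by the inductive hypothesis $q\res\tau$ is below every $p_m\res\tau$, and therefore forces $s_{p_{m+1}}(\tau)\le s_{p_m}(\tau)$ for all $m\ge n$. In $V^{\p_\tau}$, Lemma \ref{K is omega_1 closed}, applied to the forcing $\B(\vec\pi,f_\tau)$, says the coordinatewise union of a descending $\omega$-sequence is its greatest lower bound. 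That union is $s_q(\tau)$, evaluated in the extension, so $q\res\tau$ forces $s_q(\tau)\le s_{p_n}(\tau)$.

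\textbf{Greatest lower bound.} Finally, suppose $r\le p_n$ for all $n$. Then $c_q\subseteq c_r$, $A_q\subseteq A_r$, and $\dom(s_q)\subseteq\dom(s_r)$. For $\tau\in\dom(s_q)$, the condition $r\res\tau$ forces $s_r(\tau)\le s_{p_m}(\tau)$ for all large $m$. By the greatest-lower-bound part of Lemma \ref{K is omega_1 closed} in the extension, $r\res\tau$ forces $s_r(\tau)\le s_q(\tau)$. Hence $r\le_\delta q$.

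\textbf{Main obstacle.} I expect the only delicate point to be the bookkeeping in the induction: checking that restriction commutes with taking coordinatewise unions, and that $\le_\tau$ behaves well when clause (d) is applied. Both follow directly from the definition of $p\res\xi$ and from Lemma \ref{non-inductive characterization}.
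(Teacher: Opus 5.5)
Your proof is correct and takes the same approach as the paper, which only says the result is straightforward from Lemmas \ref{K is omega_1 closed} and \ref{C is omega_1 closed}. You supply exactly those details. You make explicit two points the paper leaves implicit: the induction on $\delta$, needed to get $q\res\tau\le_\tau p_m\res\tau$ when verifying clause (d), and the absoluteness of clauses (a) and (b) of the order on $\B(\vec\pi,f)$, which makes the coordinatewise unions increasing in $V$.
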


\begin{proof}
The proof is straightforward using Lemmas 
\ref{K is omega_1 closed} and \ref{C is omega_1 closed}.
\end{proof}

\begin{lemma} \label{domain dense}
	Let $\tau < \delta \le \Delta$. 
	The set of $q \in \p_\delta$ such that $\tau \in \dom(s_q)$ 
	is dense in $\p_\delta$.
\end{lemma}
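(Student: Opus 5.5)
Given $p \in \p_\delta$ with $\tau \notin \dom(s_p)$, I will extend $p$ by adding the trivial condition at coordinate $\tau$. Define $q = (c_p, s_q, A_p)$, where $s_q = s_p \cup \{ (\tau, (\emptyset, I)) \}$ for a suitable countable set $I \subseteq S^2_1$. I cannot simply take $I = \emptyset$. Clause (5) of Lemma \ref{non-inductive characterization} requires that for every $(\xi,M) \in A_p$ with $\tau \in M \cap \xi$, we have $M \cap \omega_2 \in I_{q,\tau}$. So I set
$$
I = \{ M \cap \omega_2 : (\xi,M) \in A_p, \ \tau \in M, \ \tau < \xi \}.
$$
This is countable because $A_p$ is countable. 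Each $M \cap \omega_2$ is an ordinal of cofinality $\omega_1$: $M \prec H(\omega_3)$, $|M| = \omega_1$, and $M^\omega \subseteq M$, so $M \cap \omega_2$ is an ordinal of uncountable cofinality that is at most $\omega_1$. Hence $I \subseteq S^2_1$. Clause (3)(c) holds vacuously, since $h_{q,\tau} = \emptyset$.

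Next I check that $q \in \p_\delta$ using Lemma \ref{non-inductive characterization}. Clauses (1), (2), (4) are inherited from $p$, as is (3) for coordinates other than $\tau$. Clause (5) at coordinates $\tau' \ne \tau$ is inherited from $p$, and at $\tau$ it holds by the choice of $I$.

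Then I verify $q \le_\delta p$. Items (a)--(c) are immediate. For (d), every $\tau' \in \dom(s_p)$ has $s_q(\tau') = s_p(\tau')$, so I need $q \res \tau' \Vdash_{\tau'} s_p(\tau') \le_{\dot\q_{\tau'}} s_p(\tau')$. This holds by reflexivity of the order on $\B(\dot{\vec\pi},\dot f_{\tau'})$, which is clear from Definition \ref{definition of the main forcing}, together with the fact that $q \res \tau'$ is a condition of $\p_{\tau'}$.

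The only point needing care is this last membership, $q \res \tau' \in \p_{\tau'}$. It follows from Lemma \ref{non-inductive characterization} applied to $\p_{\tau'}$: the restriction keeps only the pairs $(\sigma,M)$ with $\sigma \le \tau'$, and clause (5) for those pairs is inherited from $q$. So there is no real obstacle here. The main thing to get right is remembering the side-condition requirement (5) when choosing $I_{q,\tau}$.
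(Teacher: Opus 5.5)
Your proposal is correct and is essentially the paper's own proof. The paper likewise sets $c_q = c_p$, $A_q = A_p$, keeps $s_p$ on $\dom(s_p)$, and puts $s_q(\tau) = (\emptyset, \{ M \cap \omega_2 : \exists \xi \ (\xi,M) \in A_p, \ \tau \in M \cap \xi \})$, then checks $q \in \p_\delta$ and $q \le_\delta p$ via Lemma \ref{non-inductive characterization}; your extra verifications ($M \cap \omega_2 \in S^2_1$, reflexivity of the order on $\B(\dot{\vec \pi},\dot f_{\tau'})$, and $q \res \tau' \in \p_{\tau'}$) just supply details the paper leaves as easy.
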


\begin{proof}
	Let $p \in \p_\delta$, and we find $q \le_\delta p$ with $\tau \in \dom(s_q)$. 
	If $\tau \in \dom(s_p)$, then we are done, so assume not. 
	Define $q = (c_q,s_q,A_q)$, where $c_q = c_p$, $A_q = A_p$, 
	$\dom(s_q) = \dom(s_p) \cup \{ \tau \}$, 
	$s_q \res \dom(s_p) = s_p$, and 
	$$
	s_q(\tau) = (\emptyset,\{ M \cap \omega_2 : \exists \xi \ 
	(\xi,M) \in A_p, \ \tau \in M \cap \xi \}).
	$$
	It easily follows by Lemma \ref{non-inductive characterization} that 
	$q \in \p_\delta$ and $q \le_\delta p$.
\end{proof}

\begin{defn}
	Let $\omega \le \zeta < \delta \le \Delta$. 
	Define $D_{\delta,\zeta}$ as the set of $p \in \p_\delta$ such that, 
	letting $\zeta^{-}$ be the largest limit ordinal less than or equal to $\zeta$, 
	\begin{itemize}
		\item for all $\zeta^{-} \le \sigma < \zeta$, $\sigma \in \dom(s_p)$;
		\item for any $(\xi,M) \in A_p$ with $\zeta < \xi \le \delta$, 
		$(\zeta^{-},M) \in A_p$.
	\end{itemize}
\end{defn}

\begin{lemma} \label{dense set}
	Let $\omega \le \zeta < \delta \le \Delta$. 
	Then $D_{\delta,\zeta}$ is dense in $\p_\delta$.
\end{lemma}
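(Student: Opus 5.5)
Given $p \in \p_\delta$, I will construct an extension $q \le_\delta p$ in a single step, adding the required coordinates and pairs directly. Let $\zeta^-$ be the largest limit ordinal $\le \zeta$, so $\zeta^- \le \zeta < \delta$ and $[\zeta^-,\zeta)$ is finite. First I apply Lemma \ref{domain dense} finitely many times, or equivalently its explicit construction, to get $p' \le_\delta p$ with $[\zeta^-,\zeta) \subseteq \dom(s_{p'})$. This construction leaves $A$ and $c$ unchanged. Each new coordinate $\tau$ gets $h_{p',\tau} = \emptyset$ and
$$I_{p',\tau} = \{ M\cap\omega_2 : \exists \xi\ (\xi,M)\in A_p,\ \tau\in M\cap\xi\}.$$

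Next I define $q$ by $c_q = c_{p'}$ and $s_q = s_{p'}$. I set $A_q = A_{p'} \cup \{(\zeta^-,M) : \exists \xi\ (\xi,M)\in A_{p'},\ \zeta<\xi\le\delta\}$. This set is countable. Each added pair has first coordinate $\zeta^-$, which is a limit ordinal with $\zeta^- \le \delta$ and $\zeta^- < \omega_3$, and second coordinate $M \in \Ycal$. So clause (4) of Lemma \ref{non-inductive characterization} holds. If $\zeta^-$ could be $0$ when $\omega \le \zeta$, this would matter; but $\omega\le\zeta$ gives $\zeta^- \ge \omega$, so $\zeta^-$ is a genuine limit ordinal.

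The key check is clause (5) for the new pairs. Suppose $(\xi,M) \in A_{p'}$ with $\zeta<\xi$, and let $\tau \in M\cap\dom(s_q)\cap\zeta^-$. Then $\tau < \zeta^- \le \zeta < \xi$, so $\tau \in M \cap \dom(s_{p'}) \cap \xi$. Clause (5) for $p'$ applied to $(\xi,M)$ gives $M\cap\omega_2 \in I_{p',\tau}$, as needed. This is the point of using a pair already in the condition with a larger first coordinate: the required membership is inherited.

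Finally I check $q \le_\delta p'$ via Lemma \ref{non-inductive characterization}. Clauses (a)–(c) are immediate. For (d), $s_q(\tau)=s_{p'}(\tau)$, so $q\res\tau$ forces the reflexive relation. I then check $q \in D_{\delta,\zeta}$: the first bullet holds by the choice of $p'$, and the second holds by the definition of $A_q$, since $A_{q}$ and $A_{p'}$ have the same pairs with first coordinate $>\zeta$. By transitivity, $q\le_\delta p$.

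The only subtle point is making sure that clause (5) for $p'$ covers the coordinates in $[\zeta^-,\zeta)$ added by the domain-extension step. Lemma \ref{domain dense} guarantees this by putting the relevant $M\cap\omega_2$ into the new $I$'s. Since that step and this one can be done in either order, I will do the domain extension first, as planned.
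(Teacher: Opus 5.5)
Your proof is correct and follows essentially the paper's route. The paper observes that adding a single pair $(\gamma,M)$ below an existing $(\xi,M)\in A_p$ yields an extension, via the same inheritance of clause (5) you identify, and then combines this with Lemma \ref{domain dense} and $\omega_1$-closure. You add all the countably many pairs $(\zeta^-,M)$ at once, which removes the need to appeal to closure.
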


\begin{proof}
	It is easy to show that whenever $p \in \p_\delta$, 
	$\gamma < \xi \le \delta \le \Delta$, $\gamma$ is a limit ordinal, and $(\xi,M) \in A_p$, 
	then $(c_p,s_p,A_p \cup \{ (\gamma,M) \})$ is in $\p_\delta$ and extends $p$. 
	Now we are done by 
	Lemmas \ref{domain dense} and \ref{iteration is omega_1 closed}.
\end{proof}

\begin{lemma} \label{projection}
	Let $\zeta < \delta \le \Delta$. 
	Let $p \in \p_\delta$, and in the case that $\zeta \ge \omega$, also assume 
	that $p \in D_{\delta,\zeta}$. 
	Suppose that $u \le_\zeta p \res \zeta$. 
	Then $p$ and $u$ are compatible in $\p_\delta$. 
	More specifically, define $q = (c_q,s_q,A_q)$ where 
	$c_q = c_u$, $s_q = s_u \cup s_p \res [\zeta,\delta)$, 
	and $A_q = A_u \cup A_p$. 
	Then $q$ is in $\p_\delta$ and extends $u$ and $p$.	
\end{lemma}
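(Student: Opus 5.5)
We verify that $q$ is a condition by running through the clauses of Lemma \ref{non-inductive characterization}, and we then check the two extension relations $q \le_\delta u$ and $q \le_\delta p$ clause by clause, again using that lemma.

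\textbf{$q$ is a condition.} Clauses (1)--(4) are immediate.
\begin{itemize}
\item $c_q = c_u \in \C$.
\item $s_q$ has countable domain $\dom(s_u) \cup (\dom(s_p) \setminus \zeta) \subseteq \delta$, and each value is taken from a condition, so clause (3) holds.
\item $A_q$ is a countable set of pairs $(\xi,M)$ with $\xi \le \delta$, $\xi$ limit and $M \in \Ycal$, because $A_u$ consists of pairs with $\xi \le \zeta$.
\end{itemize}

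The main point is clause (5). Fix $(\xi,M) \in A_q$ and $\tau \in M \cap \dom(s_q) \cap \xi$. We argue by cases.
\begin{itemize}
\item If $(\xi,M) \in A_u$, then $\xi \le \zeta$, so $\tau < \zeta$ and $s_q(\tau) = s_u(\tau)$. The clause then holds because $u$ is a condition.
\item If $(\xi,M) \in A_p$ and $\tau \ge \zeta$, then $s_q(\tau) = s_p(\tau)$. The clause holds because $p$ is a condition.
\item The remaining case is $(\xi,M) \in A_p$ and $\tau < \zeta$, so $\tau \in \dom(s_u)$.
  \begin{itemize}
  \item If $\xi \le \zeta$, then $(\xi,M) \in A_{p \res \zeta} \subseteq A_u$, and we are done as in the first case.
  \item If $\xi > \zeta$, then either $\zeta < \omega$ or $\zeta \ge \omega$. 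If $\zeta < \omega$, there is no limit ordinal $\sigma$ with $\sigma \le \zeta$ and $\tau < \sigma$, and $A_u$ does not help directly. (I would check separately that this subcase is harmless, presumably because $\tau < \zeta < \omega$ and the relevant $M$ membership requirement can be pushed through $p$'s own value $s_p(\tau)$ with $u$ extending it.) If $\zeta \ge \omega$, then $p \in D_{\delta,\zeta}$ gives $(\zeta^-,M) \in A_p$, hence $(\zeta^-,M) \in A_u$.
    \begin{itemize}
    \item If $\tau < \zeta^-$, then $u$ being a condition gives $M \cap \omega_2 \in I_{u,\tau}$.
    \item If $\zeta^- \le \tau < \zeta$, then $\tau \in \dom(s_p)$ by the definition of $D_{\delta,\zeta}$, so $p$ being a condition gives $M \cap \omega_2 \in I_{p,\tau}$. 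Moreover, $u \res \tau \Vdash s_u(\tau) \le s_p(\tau)$ yields $I_{p,\tau} \subseteq I_{u,\tau}$, because this inclusion is a ground-model fact about the check-names.
    \end{itemize}
  \end{itemize}
\end{itemize}
The same inclusion argument handles $\zeta < \omega$ whenever $\tau \in \dom(s_p)$. When $\tau \notin \dom(s_p)$ and $\zeta$ is finite, there is in fact no obstacle: a limit $\xi > \tau$ with $(\xi,M) \in A_p$ and $\tau \in M$ is exactly what this clause constrains, and for finite $\tau$ every $M$ contains $\tau$. I expect this bookkeeping to be the main obstacle, and I would resolve it by arguing that the clause for $p$ is only vacuous when $\tau \notin \dom(s_p)$, so it must be checked directly. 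I would try to show that the hypothesis is stated so that this case cannot arise, or otherwise strengthen the requirement to the density assumption.

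\textbf{Extension.} First, $q \le_\delta u$.
\begin{itemize}
\item Clauses (a)--(c) are clear.
\item For (d), $s_q \res \zeta = s_u$ and $q \res \tau = u \res \tau$ for $\tau < \zeta$, because $A_q$ restricted to pairs with $\sigma \le \tau$ consists of pairs coming from $A_u$ or from $A_p \res \zeta \subseteq A_u$. Hence reflexivity applies.
\end{itemize}
Next, $q \le_\delta p$.
\begin{itemize}
\item For $\tau < \zeta$, use $u \le_\zeta p \res \zeta$ together with $q \res \tau = u \res \tau$.
\item For $\tau \ge \zeta$, we have $s_q(\tau) = s_p(\tau)$, so the required forcing statement is trivial by reflexivity of $\le_{\dot \q_\tau}$.
\end{itemize}
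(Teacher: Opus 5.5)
\textbf{There is a genuine gap, and it cannot be closed: the lemma is false in the subcase you leave open.}

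\textbf{What is correct.} Your treatment of the case $\zeta \ge \omega$ is correct and is essentially the paper's argument. You split on $\tau < \zeta^-$ versus $\zeta^- \le \tau < \zeta$, where the paper splits on whether $\tau \in \dom(s_p)$. You also rightly make explicit the inclusion $I_{p,\tau} \subseteq I_{u,\tau}$. The paper uses this tacitly when it says ``we are done since $p$ is a condition'', even though $I_{q,\tau} = I_{u,\tau}$ for $\tau < \zeta$. Your extension checks are fine.

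\textbf{The gap.} The open subcase is: $1 \le \zeta < \omega$, $(\xi,M) \in A_p$ with $\xi > \zeta$ (so $\xi \ge \omega$), and $\tau \in \dom(s_u) \setminus \dom(s_p)$. This case is neither harmless nor excluded by the hypotheses. Here is a counterexample.
\begin{itemize}
\item Take $\delta = \omega$ (legitimate, since the paper shows $\Delta = \omega_3$) and $\zeta = 1$.
\item Take any $M \in \Ycal$ and let $\theta = M \cap \omega_2$.
\item Let $p = (\emptyset,\emptyset,\{(\omega,M)\})$. This is a condition because $s_p = \emptyset$, and $p \res 1 = (\emptyset,\emptyset,\emptyset)$.
\item Let $u \in \p_1$ have $c_u = A_u = \emptyset$, $\dom(s_u) = \{0\}$, and $s_u(0) = (\{(0,\theta)\},\emptyset)$. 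Then $u$ extends $p \res 1$.
\end{itemize}
Your $q$ violates clause (5) of Lemma \ref{non-inductive characterization} at $(\omega,M)$ and $\tau = 0 \in M$, since $I_{q,0} = \emptyset$.

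Worse, $p$ and $u$ are incompatible. A common extension $w$ would have $(\omega,M) \in A_w$, $0 \in \dom(s_w)$, and $(0,\theta) \in h_{w,0}$. Clause (5) then gives $\theta \in I_{w,0}$. Clause (3)(c) then requires $\theta$ to be closed under $h_{w,0}$, which fails because $h_{w,0}(0) = \theta$.

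\textbf{The repair.} Of the two escapes you float, only the second is available: the hypothesis must be strengthened. For $\zeta < \omega$, assume $\zeta \subseteq \dom(s_p)$. This is the finite analogue of $p \in D_{\delta,\zeta}$ with $\zeta^- = 0$, and it is a dense requirement by finitely many applications of Lemma \ref{domain dense}. In the problematic subcase we then have $\tau \in \dom(s_p)$, so $M \cap \omega_2 \in I_{p,\tau} \subseteq I_{u,\tau} = I_{q,\tau}$ by your inclusion argument. The rest of your proof goes through unchanged.

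\textbf{The paper has the same hole.} Its proof asserts that clause (5) is only non-trivial when $\omega \le \zeta$, which the example refutes. This does not harm the rest of the paper. Every application only needs the lemma for a dense set of $p$: Lemma \ref{regular suborder}, Lemma \ref{dense on coordinates}, and the density fact opening the proof of Lemma \ref{final dense set}. But the statement as written needs the extra hypothesis when $1 \le \zeta < \omega$.
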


\begin{proof}
	The only non-trivial thing to check is that $q$ satisfies 
	property (5) of Lemma \ref{non-inductive characterization} 
	in the case that $\omega \le \zeta$. 
	Define $\zeta^{-}$ to be the largest limit ordinal less than or equal to $\zeta$. 
	Let $(\xi,M) \in A_q$ and let $\tau \in M \cap \dom(s_q) \cap \xi$, 
	and we show that $M \cap \omega_2 \in I_{q,\tau}$. 
	If $(\xi,M) \in A_u$, then $\xi \le \zeta$ and hence 
	$\tau \in \dom(s_u)$, so we are done since $u$ is a condition. 
	Suppose that $(\xi,M) \in A_p \setminus A_u$, and in particular, $\xi > \zeta$. 
	If $\tau \in \dom(s_p)$, then we are done since $p$ is a condition. 
	Assume that $\tau \in \dom(s_u) \setminus \dom(s_p)$. 
	Then $\tau < \zeta^{-}$, for otherwise $\tau \in \dom(s_p)$ 
	since $p \in D_{\delta,\zeta}$. 
	Since $p \in D_{\delta,\zeta}$, $(\zeta^{-},M) \in A_p$. 
	So $(\zeta^{-},M) \in A_{p \res \zeta} \subseteq A_u$. 
	As $\tau \in M \cap \dom(s_u) \cap \zeta^{-}$, 
	$M \cap \omega_2 \in I_{u,\tau} = I_{q,\tau}$. 
\end{proof}

\begin{lemma} \label{regular suborder}
	Let $\zeta < \delta \le \Delta$. 
	Then $\p_{\zeta}$ is a regular suborder of $\p_\delta$.
\end{lemma}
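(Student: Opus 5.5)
To prove that $\p_\zeta$ is a regular suborder of $\p_\delta$, I will verify the three standard clauses directly: $\p_\zeta \subseteq \p_\delta$; the orders agree on $\p_\zeta$ and incompatibility is preserved; and every maximal antichain of $\p_\zeta$ stays maximal in $\p_\delta$. For the last clause I will use the usual reduction criterion: for each $p$ in a dense subset of $\p_\delta$ there is a reduction $r \in \p_\zeta$ such that every $u \le_\zeta r$ is compatible with $p$ in $\p_\delta$.

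\textbf{Inclusion and order.} By Lemma \ref{non-inductive characterization}, a condition $u \in \p_\zeta$ satisfies clauses (1)--(5) for $\delta$ as well, since $\dom(s_u) \subseteq \zeta \subseteq \delta$ and every $(\xi,M) \in A_u$ has $\xi \le \zeta \le \delta$. The characterization of $\le_\delta$ in (a)--(d) refers only to $c$, $A$, $\dom(s)$ and the forcing relations $\Vdash_\tau$ for $\tau$ in the domain, all of which are $<\zeta$. So $\le_\delta$ restricted to $\p_\zeta$ is exactly $\le_\zeta$.

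\textbf{Incompatibility.} Suppose $u,v \in \p_\zeta$ have a common extension $w \in \p_\delta$. I will check that $w \res \zeta$ is a condition in $\p_\zeta$ extending both. Restricting $s_w$ to $\zeta$ and $A_w$ to pairs with $\sigma \le \zeta$ preserves clause (5): if $(\xi,M) \in A_w$ with $\xi \le \zeta$, then any $\tau \in M \cap \dom(s_w) \cap \xi$ already lies below $\zeta$. The order clauses (a)--(d) then transfer directly. Hence $u$ and $v$ are compatible in $\p_\zeta$.

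\textbf{Maximal antichains.} Let $\mathcal{A} \subseteq \p_\zeta$ be a maximal antichain and let $p \in \p_\delta$. When $\zeta \ge \omega$, Lemma \ref{dense set} gives $p' \le_\delta p$ with $p' \in D_{\delta,\zeta}$; when $\zeta < \omega$, take $p' = p$. As just checked, $p' \res \zeta \in \p_\zeta$, so by maximality there is $a \in \mathcal{A}$ and some $u \le_\zeta a,\, p' \res \zeta$. Lemma \ref{projection} then gives $q \in \p_\delta$ extending both $u$ and $p'$. This $q$ extends $a$ and $p$, so $\mathcal{A}$ is maximal in $\p_\delta$.

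The only delicate point is that $p \res \zeta$ alone is not a reduction for an arbitrary $p$; clause (5) can fail for the amalgam. This is exactly why we first pass to $D_{\delta,\zeta}$, and Lemmas \ref{dense set} and \ref{projection} already handle it.
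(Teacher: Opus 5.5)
Your overall route is the paper's: the three facts read off from Lemma \ref{non-inductive characterization}, then maximality of antichains via density plus Lemma \ref{projection}. For $\zeta=0$ and for $\zeta\ge\omega$ what you write is correct.

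\textbf{The gap.} It is the case $1\le\zeta<\omega\le\delta$, where you set $p'=p$ and apply Lemma \ref{projection} with no hypothesis on $p$. The obstruction you yourself identify, clause (5) failing for the amalgam, is not special to infinite $\zeta$. A pair $(\xi,M)\in A_p$ with $\xi\ge\omega$ constrains every $\tau<\zeta$, since all finite ordinals lie in $M$, yet $p\res\zeta$ keeps no trace of that pair. Here is a concrete failure:
\begin{itemize}
\item Let $\zeta=1$, fix $M\in\Ycal$, and let $p=(\emptyset,\emptyset,\{(\omega,M)\})\in\p_\delta$, so $p\res 1$ is the trivial condition.
\item Let $u\in\p_1$ have $\dom(s_u)=\{0\}$ and $s_u(0)=(\{(\alpha,\beta)\},\emptyset)$ with $\alpha<M\cap\omega_2\le\beta$. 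Then $u\le_1 p\res 1$.
\item But $u$ and $p$ are incompatible. Any common extension $q$ has $(\omega,M)\in A_q$ and $0\in M\cap\dom(s_q)\cap\omega$, so clause (5) puts $M\cap\omega_2$ into $I_{q,0}$.
\item Meanwhile $h_{q,0}(\alpha)=\beta$, which contradicts the closure of $M\cap\omega_2$ under $h_{q,0}$.
\end{itemize}
So if your maximal antichain contains $u$ and you pick $a=u$, the triple produced by Lemma \ref{projection} is not a condition. The paper's one-line proof has the same lacuna, inherited from exempting finite $\zeta$ in the statement of Lemma \ref{projection}. That exemption is harmless only when $\zeta=0$ or $\delta<\omega$.

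\textbf{The repair.} For finite $\zeta$, first apply Lemma \ref{domain dense} finitely many times to get $p'\le_\delta p$ with $\zeta\subseteq\dom(s_{p'})$. This is the natural analogue of $D_{\delta,\zeta}$ with $\zeta^-=0$. Now form the amalgam $q=(c_u,\,s_u\cup s_{p'}\res[\zeta,\delta),\,A_u\cup A_{p'})$.
\begin{itemize}
\item A pair $(\xi,M)\in A_{p'}\setminus A_u$ has $\xi>\zeta$.
\item Any $\tau<\zeta$ in $M\cap\dom(s_q)$ already lies in $\dom(s_{p'})$, so $M\cap\omega_2\in I_{p',\tau}\subseteq I_{u,\tau}=I_{q,\tau}$.
\item The inclusion $I_{p',\tau}\subseteq I_{u,\tau}$ holds in $V$, because $u\le_\zeta p'\res\zeta$ forces an inclusion between ground-model sets.
\end{itemize}
The remaining cases of clause (5), and the check that $q$ extends $u$ and $p'$, go exactly as in the proof of Lemma \ref{projection}. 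With this modification your argument is complete.
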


\begin{proof}
	Using Lemma \ref{non-inductive characterization}, 
	it is straightforward to show the following:
	\begin{itemize}
		\item $\p_\zeta \subseteq \p_\delta$;
		\item $\le_\zeta$ equals $\le_\delta \cap \ \p_\zeta^2$;
		\item for all $p, q \in \p_\zeta$ and for all $r \in \p_\delta$, 
		$r \le_\delta p, q$ implies $r \res \zeta \le_\zeta p, q$.
	\end{itemize}
	Suppose that $A$ is a maximal antichain of $\p_\zeta$. 
	Then the third statement above implies that $A$ is an antichain of $\p_\delta$, 
	and a routine argument using 
	Lemma \ref{projection} shows that $A$ is a maximal antichain of $\p_\delta$.
\end{proof}

In particular, $\p_0$ is a regular suborder of $\p_\delta$ for all $\delta \le \Delta$.

\begin{lemma} \label{two step equivalence}
	Let $\delta < \Delta$. 
	Define $\Dcal$ as the set of $q \in \p_{\delta+1}$ such that $\delta \in \dom(s_q)$ 
	and define $\Ecal$ as the set of $p * \dot a$ in 
	$\p_\delta * \B(\dot{\vec \pi},\dot f_\delta)$ such that 
	for some $h$ and $I$, $p \Vdash_\delta \dot a = (\check h,\check I)$. 
	Then $\Dcal$ and $\Ecal$ are dense in $\p_{\delta+1}$ and 
	$\p_\delta * \B(\dot{\vec \pi},\dot f_\delta)$ respectively. 
	Moreover, letting $F : \Dcal \to \Ecal$ be the function defined by 
	$F(q) = (q \res \delta) * \dot a$, where 
	$\dot a$ is a $\p_\delta$-name for 
	$(h_{q,\delta},I_{q,\delta})$, then $F$ is a dense embedding. 
\end{lemma}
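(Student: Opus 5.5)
We verify the four ingredients of a dense embedding in turn: density of $\Dcal$ and $\Ecal$, that $F$ maps into $\Ecal$, that $F$ preserves and reflects the order, that $F$ preserves incompatibility (which follows from order reflection plus a lifting argument), and that $\ran(F)$ is dense in $\Ecal$.

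Density of $\Dcal$ in $\p_{\delta+1}$ is exactly Lemma \ref{domain dense} with $\tau = \delta$. For $\Ecal$, let $p * \dot a \in \p_\delta * \B(\dot{\vec \pi},\dot f_\delta)$. By Lemma \ref{iteration is omega_1 closed} $\p_\delta$ is $\omega_1$-closed, so it adds no new countable subsets of $\omega_2$. Hence we can extend $p$ to $p'$ deciding $\dot a = (\check h,\check I)$ for ground model countable $h, I$. Then $p' * \dot a \le p * \dot a$ lies in $\Ecal$.

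\textbf{$F$ maps into $\Ecal$.} For $q \in \Dcal$, $q \res \delta \in \p_\delta$ by Lemma \ref{non-inductive characterization}. Clauses (3)(a)--(c) for $s_q(\delta)$ are absolute properties of $(h_{q,\delta},I_{q,\delta})$. So $\Vdash_\delta (\check h_{q,\delta},\check I_{q,\delta}) \in \B(\dot{\vec \pi},\dot f_\delta)$, because membership does not depend on $\vec\pi$ or $f$, as the footnote notes. Thus $F(q) \in \Ecal$.

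\textbf{Order.} For $q, r \in \Dcal$, Lemma \ref{non-inductive characterization} gives that $r \le_{\delta+1} q$ iff the following all hold:
\begin{itemize}
\item $r\res\delta \le_\delta q\res\delta$;
\item $A_q \subseteq A_r$, noting the pairs $(\delta+1,M)$ are impossible, since $\delta+1$ is not a limit;
\item $r\res\delta \Vdash_\delta s_r(\delta)\le s_q(\delta)$.
\end{itemize}
One point needs care. $A_q$ may contain pairs $(\delta,M)$ when $\delta$ is a limit, and these belong to $A_{q\res\delta}$ by definition of restriction, so $A_q = A_{q \res \delta}$. Hence clause (b) is absorbed into $r\res\delta\le_\delta q\res\delta$. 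So $r \le q$ iff $F(r) \le F(q)$ in the two-step iteration. This gives order preservation and reflection, and hence injectivity modulo equivalence.

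\textbf{Incompatibility.} Suppose $F(q)$ and $F(r)$ have a common extension in $\Ecal$ of the form $u * \check b$ with $b=(h,I)$. Let $w = (c_u, s_u \cup\{(\delta,b)\}, A_u)$. I need to check $w \in \p_{\delta+1}$. The one subtle clause is (5) for pairs $(\xi,M)\in A_u$ with $\xi\le\delta$ and $\tau=\delta$. This requires $\delta\in M\cap\xi$, i.e.\ $\xi>\delta$, which is impossible. So (5) is vacuous at coordinate $\delta$. Then $w\in\Dcal$ and $F(w)=u*\check b$, and by order reflection $w$ extends $q$ and $r$.

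\textbf{Density of the range.} The same construction shows that every element of $\Ecal$ is $F$ of some $w\in\Dcal$. So $F$ is in fact essentially onto $\Ecal$, giving density.

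The main obstacle I anticipate is exactly this bookkeeping of the $A$-component and clause (5) at the new coordinate $\delta$. These checks all reduce to the observation that no side-condition pair can have first coordinate exceeding $\delta$ in $\p_{\delta+1}$.
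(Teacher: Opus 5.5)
Your proposal is correct and is essentially the straightforward verification the paper has in mind; the paper omits the proof. It uses the same ingredients you do: density of $\Dcal$ from Lemma \ref{domain dense}, density of $\Ecal$ from $\omega_1$-closure, and the bookkeeping facts that $A_q = A_{q \res \delta}$ and that clause (5) is vacuous at coordinate $\delta$, since no side-condition pair has first coordinate above $\delta$. The one detail you gloss over is in pulling $b=(h,I)$ back to clauses (3)(a)--(c) in $V$ from $u \Vdash \check b \in \B(\dot{\vec \pi},\dot f_\delta)$: this uses that these clauses are absolute between $V$ and $V^{\p_\delta}$, which holds because $\p_\delta$ is $\omega_1$-closed and, as $\delta < \Delta$, preserves $\omega_2$.
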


The proof is straightforward.

\begin{prop} \label{forcing (B)}
	Let $\tau < \delta \le \Delta$. 	
	Assume that $\p_{\delta}$ preserves $\omega_2$ and forces 
	that the set $\bigcup \{ I_{p,\tau} : p \in \dot G_{\delta}, \ \tau \in \dom(s_p) \}$ 
	is stationary in $\omega_2$, where $\dot G_{\delta}$ is the canonical $\p_{\delta}$-name 
	for the generic filter. 
	Then $\p_{\delta}$ forces $(B)_{\dot{\vec \pi},\dot f_\tau}$.
\end{prop}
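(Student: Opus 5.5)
Working in $V[G_\delta]$ for a $\p_\delta$-generic $G_\delta$, we will check directly that the pair $h_\tau,I_\tau$ read off from $G_\delta$ witnesses $(B)$ for $g = f_\tau$, mirroring Lemma \ref{K forces (B)}. Set
$$
h_\tau = \bigcup \{ h_{p,\tau} : p \in G_\delta, \ \tau \in \dom(s_p) \}, \qquad
I_\tau = \bigcup \{ I_{p,\tau} : p \in G_\delta, \ \tau \in \dom(s_p) \},
$$
and let $\vec\pi$ and $f_\tau$ be the interpretations of $\dot{\vec \pi}$ and $\dot f_\tau$. The plan has three steps: identify the stage-$\tau$ component with a $\B(\vec\pi,f_\tau)$-generic, read off totality and closure, and then verify the two inequalities with $D_\beta = \dom(h_{p,\tau}) \cap \beta$.

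The first step is to pass to the intermediate model. By Lemma \ref{regular suborder}, $G_{\tau+1} = G_\delta \cap \p_{\tau+1}$ is $\p_{\tau+1}$-generic. By Lemma \ref{two step equivalence}, this yields a $\p_\tau$-generic $G_\tau$ together with a filter $H$ on $\B(\vec\pi,f_\tau)^{V[G_\tau]}$ that is generic over $V[G_\tau]$. Moreover, $H$ consists exactly of the pairs $(h_{q,\tau},I_{q,\tau})$ for $q \in G_{\tau+1}$ with $\tau \in \dom(s_q)$, using the density of $\Dcal$. Since conditions in $G_\delta$ restrict to conditions in $G_{\tau+1}$ with the same $\tau$-coordinate, $h_\tau$ and $I_\tau$ are the unions over $H$. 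Note that $\vec\pi$ already lives in $V[G_0] \subseteq V[G_\tau]$.

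The second step uses only the genericity of $H$ over $V[G_\tau]$. Lemma \ref{h is total}, applied in $V[G_\tau]$ to singletons, shows that $h_\tau$ is a total injection $\omega_2 \to \omega_2$. Here $\omega_2$ is the same in all the models, because $\p_\delta$ preserves $\omega_2$ and hence so does $\p_\tau$. Clause (c) in the definition of conditions shows that every $\beta \in I_\tau$ is closed under $h_\tau$. Stationarity of $I_\tau$ in $V[G_\delta]$, and the inclusion $I_\tau \subseteq S^2_1$, are exactly the hypothesis. Cofinality $\omega_1$ is absolute here, since $\omega_1$ and $\omega_2$ are preserved.

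The third step proves the two inequalities. Fix $\beta \in I_\tau$, choose $p \in H$ with $\beta \in I_p$, and set $D_\beta = \dom(h_p) \cap \beta$, which is countable. Given distinct $\alpha,\gamma \in \beta \setminus D_\beta$, find $q \in H$ with $q \le p$ and $\alpha,\gamma \in \dom(h_q)$. The order clause (c)(i),(ii) of Definition \ref{definition of the main forcing} then gives the required inequalities for $g = f_\tau$. These facts are arithmetic statements about $\pi_\beta$ and $f_\tau$, which lie in $V[G_\tau]$, so they are absolute upward to $V[G_\delta]$.

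The main point needing care is the first step: checking that the $\le_\delta$-relation on the $\tau$-coordinate, given by Lemma \ref{non-inductive characterization}(d) as a forced statement $q\res\tau \Vdash_\tau s_q(\tau) \le s_p(\tau)$, really yields that the objects in $H$ are pairwise compatible and ordered as in $\B(\vec\pi,f_\tau)$ within $V[G_\tau]$. This holds because $q\res\tau \in G_\tau$. The rest is a transcription of Lemma \ref{K forces (B)}.
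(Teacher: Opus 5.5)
Your proposal is correct and follows the paper's route. You pass to $G_{\tau+1} = G_\delta \cap \p_{\tau+1}$ via Lemma \ref{regular suborder}, use Lemma \ref{two step equivalence} to treat the $\tau$-th coordinates as a $\B(\vec\pi,f_\tau)$-generic over $V[G_\tau]$, and run the argument of Lemma \ref{K forces (B)}. The only difference is that the paper cites Lemma \ref{K forces (B)} and calls the rest routine, while you unwind it inline, so the stationarity hypothesis is used directly in $V[G_\delta]$, where $(B)$ must hold.
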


\begin{proof}
	Suppose that $G$ is a generic filter on $\p_\delta$. 
	Let $\vec \pi = (\dot{\vec \pi})^{G}$ and $f_\tau = \dot f_\tau^G$. 
	By Lemma \ref{regular suborder}, $G_{\tau+1} = G \cap \p_{\tau+1}$ 
	is a generic filter on $\p_{\tau+1}$. 
	Note that 
	$\Ical_\tau = \bigcup \{ I_{p,\tau} : p \in G, \ \tau \in \dom(s_p) \}$ is 
	equal to $\bigcup \{ I_{u,\tau} : u \in G_{\tau+1}, \ \tau \in \dom(s_u) \}$. 
	By Lemmas \ref{K forces (B)} and \ref{two step equivalence}, 
	it is routine to check that $(B)_{\vec \pi,f_\tau}$ holds in $V[G]$ 
	as witnessed by $\Ical_\tau$.
\end{proof}

A similar argument combined with Lemma \ref{projection} shows the following.

\begin{lemma} \label{dense on coordinates}
	Let $\zeta < \delta \le \Delta$. 
	Suppose that $\dot D$ is a $\p_\zeta$-name for a dense subset of 
	$\dot \q_\zeta$. 
	Then for all $p \in \p_\delta$, there exists $q \le_\delta p$ such that 
	$\zeta \in \dom(s_q)$ and $q \res \zeta \Vdash_\zeta s_q(\zeta) \in \dot D$. 
\end{lemma}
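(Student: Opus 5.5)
The plan is to follow the pattern of Proposition \ref{forcing (B)}: first handle the case $\delta = \zeta+1$ using the two-step factorization of Lemma \ref{two step equivalence}, and then pass to arbitrary $\delta > \zeta$ using the compatibility result of Lemma \ref{projection}.

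Let $p \in \p_\delta$. By Lemma \ref{domain dense} we may extend $p$ so that $\zeta \in \dom(s_p)$. If $\zeta \ge \omega$, Lemma \ref{dense set} lets us extend further into $D_{\delta,\zeta+1}$ when $\zeta+1<\delta$; this is harmless because $D_{\delta,\zeta+1}$ is dense. So we may assume $p$ has both properties.

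Next we work with the restriction $p \res (\zeta+1) \in \p_{\zeta+1}$. It lies in the dense set $\Dcal$ of Lemma \ref{two step equivalence}, and $F(p\res(\zeta+1)) = (p\res\zeta) * \dot a$, where $\dot a$ names $s_p(\zeta)$. In the two-step iteration $\p_\zeta * \dot\q_\zeta$, the density of $\dot D$ gives the following. Choose a $\p_\zeta$-name $\dot b$ such that $p \res \zeta \Vdash \dot b \le \dot a$ and $\dot b \in \dot D$. Then extend $(p\res\zeta) * \dot b$ into $\Ecal$. This yields $u \le_\zeta p \res \zeta$ and a ground-model pair $(h,I)$ with
\[ u \Vdash_\zeta (\check h,\check I) \in \dot D \text{ and } (\check h,\check I) \le s_p(\zeta). \]
Pulling back by the dense embedding $F$ gives $u' \in \p_{\zeta+1}$ with $u'\res\zeta = u$ and $s_{u'}(\zeta) = (h,I)$. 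Here $A_{u'}$ is $A_u$ together with $A_{p\res(\zeta+1)}$. We must check condition (5) of Lemma \ref{non-inductive characterization} for pairs $(\zeta+1,M)$; this does not arise, since $\zeta+1$ is not a limit ordinal. We must also check pairs $(\xi,M)$ with $\xi\le\zeta$ and $\zeta \in M\cap \xi$; this is impossible since $\zeta \ge \xi$. So $u'$ is a condition, and $u' \le_{\zeta+1} p\res(\zeta+1)$.

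Finally, apply Lemma \ref{projection} with $\zeta+1$ in place of $\zeta$. This amalgamates $u'$ with $p$ into $q$, where $s_q = s_{u'} \cup s_p\res[\zeta+1,\delta)$. Then $q \le_\delta p$, $\zeta \in \dom(s_q)$, $s_q(\zeta)=(h,I)$, and $q\res\zeta = u$ forces $s_q(\zeta)\in\dot D$.

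The main obstacle is the pullback step. We must see that extending inside $\p_\zeta*\dot\q_\zeta$ really corresponds to an extension in $\p_{\zeta+1}$ whose side-condition set $A$ is compatible with $p$. Specifically, $I$ must contain $M\cap\omega_2$ for the relevant $(\xi,M)\in A_p$ with $\zeta\in M\cap\xi$. This is automatic because $I \supseteq I_{p,\zeta}$, which is part of $(h,I)\le s_p(\zeta)$. If needed, we also verify that the $D_{\delta,\zeta+1}$ hypothesis is met, exactly as in the proof of Lemma \ref{projection}.
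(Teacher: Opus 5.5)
Your proposal is correct and takes essentially the paper's route: the paper gets this lemma by the argument of Proposition \ref{forcing (B)}, namely the dense embedding of Lemma \ref{two step equivalence} at stage $\zeta+1$, combined with Lemma \ref{projection} to amalgamate back into $\p_\delta$.

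One small point concerns the step where you pass into $\Ecal$. Since $\dot D$ is only assumed dense, not open, you should keep the name $\dot b$ and only strengthen $p \res \zeta$ to some $u$ that decides $\dot b = (\check h,\check I)$. This is possible because $\p_\zeta$ is $\omega_1$-closed, so it adds no new conditions to $\dot\q_\zeta$. An arbitrary extension in $\Ecal$ would not suffice, since it could replace $\dot b$ by a stronger value that lies outside $\dot D$. Also, when $\delta = \zeta+1$ the amalgamation step is simply unnecessary.
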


The next two lemmas follow by Lemmas \ref{h is total} and \ref{dense on coordinates}.

\begin{lemma} \label{dense for h}
	Let $\delta \le \Delta$. 
	Let $Y$ be a countable subset of $\omega_2$. 
	Then for any $p \in \p_\delta$, there exists $q \le_\delta p$ 
	such that for any $\tau$ in $\dom(s_p)$, 
	$Y \subseteq \dom(h_{q,\tau})$.
\end{lemma}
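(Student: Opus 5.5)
The plan is to handle the countably many coordinates of $\dom(s_p)$ one at a time and then take a greatest lower bound. Enumerate $\dom(s_p)$ as $\langle \tau_n : n < \omega \rangle$, listing each coordinate infinitely often if that is convenient; if $\dom(s_p)$ is empty there is nothing to do. We build a descending sequence $p = q_0 \ge_\delta q_1 \ge_\delta \cdots$ in $\p_\delta$. At step $n$ we ensure that $\tau_n \in \dom(s_{q_{n+1}})$ and $Y \subseteq \dom(h_{q_{n+1},\tau_n})$.

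For the step from $q_n$ to $q_{n+1}$, let $\tau = \tau_n$. Since $\tau < \delta$, let $\dot D$ be the $\p_\tau$-name for the set of conditions $r \in \B(\dot{\vec \pi},\dot f_\tau)$ with $Y \subseteq \dom(h_r)$. The parameter $Y$ is in the ground model, so $\check Y$ is a legitimate name. By Lemma \ref{h is total}, applied in $V^{\p_\tau}$ to the poset $\dot \q_\tau = \B(\dot{\vec \pi},\dot f_\tau)$, $\p_\tau$ forces that $\dot D$ is dense open in $\dot \q_\tau$. Since $\tau < \delta \le \Delta$, $\p_\tau$ is defined and $\dot \q_\tau$ is defined at stage $\tau$, so Lemma \ref{dense on coordinates} applies. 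It gives $q_{n+1} \le_\delta q_n$ with $\tau \in \dom(s_{q_{n+1}})$ and $q_{n+1}\res\tau \Vdash_\tau s_{q_{n+1}}(\tau) \in \dot D$.

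We then need to convert this forcing statement into the literal statement $Y \subseteq \dom(h_{q_{n+1},\tau})$. This is immediate once we note that $s_{q_{n+1}}(\tau) = (h_{q_{n+1},\tau}, I_{q_{n+1},\tau})$ is a ground-model pair of countable sets (Lemma \ref{non-inductive characterization}). The forced statement $\check Y \subseteq \dom(\check h_{q_{n+1},\tau})$ is therefore absolute, and it holds in $V$.

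Finally, let $q$ be the greatest lower bound given by Lemma \ref{iteration is omega_1 closed}. For each $\tau \in \dom(s_p)$, choose $n$ with $\tau_n = \tau$. Then $h_{q,\tau} \supseteq h_{q_{n+1},\tau}$, because $h_{q,\tau}$ is the union of the $h_{q_m,\tau}$. Hence $Y \subseteq \dom(h_{q,\tau})$, and $q \le_\delta p$.

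The only point needing care is the appeal to Lemma \ref{dense on coordinates}, and in particular that the stated density holds in the intermediate model. Once that is granted, extension along the sequence does not shrink domains, because $h_{q_m,\tau} \subseteq h_{q_{m+1},\tau}$ literally. This inclusion is forced by $q_{m+1}\res\tau$ and involves ground-model objects, so it holds outright. I expect no real obstacle beyond this absoluteness remark.
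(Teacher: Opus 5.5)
Your proof is correct and takes the same approach as the paper, which only says that the lemma follows from Lemmas \ref{h is total} and \ref{dense on coordinates}. You fill in the implicit details: handling the countably many $\tau \in \dom(s_p)$ one at a time, taking the greatest lower bound via Lemma \ref{iteration is omega_1 closed}, and using absoluteness to turn the forced inclusion $Y \subseteq \dom(h_{q_{n+1},\tau})$ into a literal one.
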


\begin{lemma} \label{dense for c}
	Let $\delta \le \Delta$. 
	Let $X$ be a countable set of pairs of the form $(\beta,\gamma)$, where 
	$\omega_1 \le \beta < \omega_2$ and $\gamma < \beta$. 
	Then for any $p \in \p_\delta$, there exists $q \le p$ such that 
	$X \subseteq \dom(c_q)$.
\end{lemma}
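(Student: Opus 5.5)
The statement to prove is Lemma \ref{dense for c}: given $p \in \p_\delta$ and a countable set $X$ of pairs $(\beta,\gamma)$ with $\omega_1 \le \beta < \omega_2$ and $\gamma < \beta$, find $q \le_\delta p$ with $X \subseteq \dom(c_q)$. The plan is to enlarge only the $\C$-component and leave $s_p$ and $A_p$ alone. Set $q = (c_q, s_p, A_p)$, where $c_q \supseteq c_p$ extends $c_p$ to the pairs in $X$.

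\textbf{Constructing $c_q$.} Enumerate $X$ as $\langle (\beta_n,\gamma_n) : n < \omega \rangle$ and define $c_q$ in $\omega$ steps. At step $n$, if $(\beta_n,\gamma_n)$ is not yet in the domain, assign it any countable ordinal not already used as a value on the $\beta_n$-section. This is possible because the current condition is countable, so only countably many values have been used on that section. Each section then stays injective into $\omega_1$, and $c_q$ is a condition in $\C$ with $c_p \subseteq c_q$. This is just the density argument behind Lemma \ref{generic bijections}, and one could instead use Lemma \ref{C is omega_1 closed}.

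\textbf{Checking $q \in \p_\delta$.} Use Lemma \ref{non-inductive characterization}. Clauses (2)--(5) mention only $s_q = s_p$ and $A_q = A_p$, so they are inherited from $p$. Clause (1) holds by construction.

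\textbf{Checking $q \le_\delta p$.} Clauses (a)--(c) of Lemma \ref{non-inductive characterization} are immediate. The one point needing care is clause (d): for each $\tau \in \dom(s_p)$ we need
$q \res \tau \Vdash_\tau s_p(\tau) \le_{\dot \q_\tau} s_p(\tau)$.
This holds because the order on $\B(\dot{\vec \pi},\dot f_\tau)$ is forced to be reflexive. For $r = s_p(\tau)$, clauses (a) and (b) of Definition \ref{definition of the main forcing} are trivial, and clause (c) is vacuous since $\dom(h_r) \setminus \dom(h_r) = \emptyset$. Moreover, $q \res \tau$ forces $s_p(\tau)$ to be a condition of $\dot \q_\tau$, as noted after the successor-case definition. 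So the entire verification reduces to reflexivity, and I expect no real obstacle.
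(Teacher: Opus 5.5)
Your proof is correct and is essentially the paper's argument. The paper dispatches this lemma in one line by citing the earlier density and amalgamation lemmas; for the $\C$-coordinate that amounts to extending $p \res 0$ inside $\p_0 \cong \C$ and amalgamating via Lemma \ref{projection} with $\zeta = 0$, which gives exactly your condition $(c_q, s_p, A_p)$ — your direct check through Lemma \ref{non-inductive characterization}, with clause (d) reduced to reflexivity of the order on $\B(\dot{\vec \pi},\dot f_\tau)$, is that argument unpacked.
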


\section{Preserving Cardinals, 1} \label{Preserving Cardinals, 1}

In order to prove that $\Delta = \omega_3$ and the recursion succeeds, 
we need to show that each $\p_\delta$ preserves all cardinals. 
By Lemma \ref{iteration is omega_1 closed}, 
we immediately have the preservation of $\omega_1$. 
Preserving cardinals greater than or equal to $\omega_3$ is handled next. 
The preservation of $\omega_2$ is the most difficult challenge of the article and 
is proven in Section \ref{Preserving Cardinals, 2}.

\begin{lemma} \label{centered}
	Assume that $\delta \le \Delta$ and $\delta < \omega_3$. 
	Then $\p_\delta$ is $\omega_2$-centered.
\end{lemma}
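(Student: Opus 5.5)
We will show that $\p_\delta$ is a union of $\omega_2$-many centered subsets, by attaching to each condition a small ``type'' so that conditions with the same type have a common extension. Since $\delta < \omega_3$, we have $|\delta| \le \omega_2$. Fix an injection $e : \delta \to \omega_2$. Under \textsf{GCH} (so \textsf{CH}), the following sets all have size $\omega_2$:
\begin{itemize}
\item the countable subsets of $\omega_2$;
\item the countable subsets of $\omega_2 \times \omega_1$;
\item the countable partial functions between such sets.
\end{itemize}
The obstacle is that conditions carry countably many models $M \in \Ycal$ and countable sets $I_{p,\tau}$. There are $\omega_3$-many of the former, so we cannot simply record them; we must show they never obstruct amalgamation.

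For $p \in \p_\delta$ we define its type $t(p)$ to consist of:
\begin{itemize}
\item $c_p$, which has $\omega_2$ possibilities since $|\C| = \omega_2$ under \textsf{CH};
\item the set $e[\dom(s_p)]$;
\item the function $e(\tau) \mapsto h_{p,\tau}$ for $\tau \in \dom(s_p)$.
\end{itemize}
There are only $\omega_2$ types. We claim that any finitely many conditions $p_1,\dots,p_n$ with the same type have a common extension $q$. Since $e$ is injective, all $p_i$ share the same $c$, the same $\dom(s)$, and the same $h_\tau$ for each $\tau$. Define $q$ by:
\begin{itemize}
\item $c_q = c$;
\item $\dom(s_q) = \dom(s_{p_1})$, with $h_{q,\tau} = h_\tau$ and $I_{q,\tau} = \bigcup_i I_{p_i,\tau}$;
\item $A_q = \bigcup_i A_{p_i}$.
\end{itemize}

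We then check that $q \in \p_\delta$ via Lemma \ref{non-inductive characterization}. Clauses (1)--(4) are immediate: $I_{q,\tau}$ is countable, lies in $S^2_1$, and each of its members is closed under $h_\tau$ and $h_\tau^{-1}$, because each member comes from some $p_i$ with the same $h$. For clause (5), take $(\xi,M) \in A_{p_i}$ and $\tau \in M \cap \dom(s_q) \cap \xi$. Since $\dom(s_q) = \dom(s_{p_i})$, we get $M \cap \omega_2 \in I_{p_i,\tau} \subseteq I_{q,\tau}$. This is why the domain of $s$ must be fixed by the type.

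The key step is verifying $q \le_\delta p_i$, specifically clause (d): $q \res \tau \Vdash_\tau s_q(\tau) \le_{\dot\q_\tau} s_{p_i}(\tau)$ in $\B(\dot{\vec\pi},\dot f_\tau)$. Here $h_q = h_{p_i}$ and $I_{p_i,\tau} \subseteq I_{q,\tau}$. Condition (c) of Definition \ref{definition of the main forcing} quantifies over $\dom(h_q) \setminus \dom(h_{p_i}) = \emptyset$, so it is vacuous. This holds in $V$, so it is forced outright, independently of the names $\dot{\vec\pi}$ and $\dot f_\tau$.

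Clauses (a)--(c) are clear. Thus each type class is centered, and $\p_\delta$ is $\omega_2$-centered. The main point to get right was noticing that the $I$ and $A$ components, which range over too many values to record, can simply be unioned, since the ordering on $\B(\vec\pi,f)$ places no constraints when $h$ is unchanged.
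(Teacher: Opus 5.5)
Your proof is correct and follows essentially the same approach as the paper: classify conditions by $c_p$, $\dom(s_p)$ and the working parts of $s_p$, then amalgamate finitely many conditions of the same class by taking the union of their $A$-components. The only difference is that the paper also records the sets $I_{p,\tau}$ in the type and simply reuses $s_{p_0}$. This is possible because under \textsf{CH} there are only $\omega_2$ such countable sequences, contrary to your closing remark. You instead take the union of the $I_{p,\tau}$, which costs you the extra, correct observation that clause (c) of the order on $\B(\vec\pi,f)$ is vacuous when $h$ is unchanged.
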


\begin{proof}
	We associate to any condition $p \in \p_\delta$ the following parameters:
	\begin{enumerate}
		\item $c_p$;
		\item $\dom(s_p)$;
		\item $\langle (h_{p,\tau},I_{p,\tau}) : \tau \in \dom(s_p) \rangle$.
	\end{enumerate}
	By \textsf{CH}, there are $\omega_2$-many possibilities for these three objects. 

	Assume that $p_0,\ldots,p_{n-1}$ have the same such parameters. 
	For each $i < n$, write $p_i = (c_i,s_i,A_i)$ and for each 
	$\tau \in \dom(s_i)$, write 
	$s_i(\tau) = (h_{i,\tau},I_{i,\tau})$. 
	Define $q$ as follows:
	\begin{itemize}
		\item $c_q = c_0$, $\dom(s_q) = \dom(s_0)$, 
		and $A_q = \bigcup_{i < n} A_i$;
		\item for all $\tau \in \dom(s_q)$, 
		$s_q(\tau) = (h_{0,\tau},I_{0,\tau})$.
	\end{itemize}
	Using Lemma \ref{non-inductive characterization}, 
	it is simple to check that $q$ is in $\p_\delta$ and extends each of $p_0,\ldots,p_{n-1}$.
\end{proof}

\begin{prop} \label{chain condition}
	For all $\delta \le \Delta$, $\p_\delta$ is $\omega_3$-Knaster.
\end{prop}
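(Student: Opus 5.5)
For $\delta < \omega_3$, Lemma \ref{centered} shows that $\p_\delta$ is $\omega_2$-centered. An $\omega_2$-centered poset is $\omega_3$-Knaster: given $\omega_3$-many conditions, pigeonhole yields $\omega_3$-many of them in a single centered piece, and these are pairwise (indeed finitely) compatible. So the only case needing work is $\delta = \Delta = \omega_3$, and the plan is a $\Delta$-system argument for $\p_{\omega_3}$.

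Let $\langle p_i : i < \omega_3 \rangle$ be conditions in $\p_{\omega_3}$. By Lemma \ref{non-inductive characterization}, each $\dom(s_{p_i})$ is a countable subset of $\omega_3$, and each $A_{p_i}$ is a countable set of pairs $(\xi,M)$ with $\xi < \omega_3$.
\begin{enumerate}
\item Since $\textsf{GCH}$ gives $\omega_2^{\omega} = \omega_2 < \omega_3$ and $\omega_3$ is regular, the $\Delta$-system lemma yields an unbounded $Z_0 \subseteq \omega_3$ such that $\{\dom(s_{p_i}) : i \in Z_0\}$ forms a $\Delta$-system with root $R$.
\item Shrink further so that all $p_i$, $i \in Z_0$, agree on $c_{p_i}$ and on $s_{p_i} \res R$. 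There are at most $\omega_2$-many possibilities for these parameters, by \textsf{CH} and since $R$ is fixed, so some $\omega_3$-sized subfamily agrees.
\item Optionally, also thin out so that the sets $\{\xi : (\xi,M) \in A_{p_i}\}$ form a $\Delta$-system, though I expect this to be unnecessary.
\end{enumerate}

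Given $i \ne j$ in the final family, define $q$ by $c_q = c_{p_i}$, $s_q = s_{p_i} \cup s_{p_j}$ (a function, since the two agree on $R$), and $A_q = A_{p_i} \cup A_{p_j}$. Conditions (1)–(4) of Lemma \ref{non-inductive characterization} are immediate. The ordering clauses (a)–(d) are also immediate, because on each coordinate $\tau$ we have $s_q(\tau) = s_{p_i}(\tau)$ or $s_q(\tau) = s_{p_j}(\tau)$, and $q \res \tau$ is a condition, so the forcing statement in (d) is trivial reflexivity.

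The main obstacle is condition (5) for the cross terms. Take $(\xi,M) \in A_{p_i}$ and $\tau \in M \cap \dom(s_{p_j}) \cap \xi$ with $\tau \notin \dom(s_{p_i})$; we need $M \cap \omega_2 \in I_{q,\tau} = I_{p_j,\tau}$, which is not automatic. I would repair this rather than shrink further: enlarge $I_{q,\tau}$ to $I_{p_j,\tau} \cup \{M \cap \omega_2 : (\xi,M) \in A_q,\ \tau \in M \cap \xi\}$, imitating Lemma \ref{domain dense}. This set is countable and contained in $S^2_1$, and every added $\beta = M \cap \omega_2$ is closed under $h_{p_j,\tau}$ and its inverse. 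For that last point, I first thin the family so that $\dom(h_{p,\tau})$ and $\ran(h_{p,\tau})$ lie in $M$, using elementarity and $M^\omega \subseteq M$ together with $\tau \in M$. If that turns out not to be arrangeable directly, I extend $p_j$ first so that the relevant $h$-parts and $I$-parts are fixed by the pigeonhole on countable subsets of $\omega_2$.

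The new ordering must still hold. Since only $I$ grows and $h$ is unchanged, clause (c) in the order on $\B$ is vacuous for $q \le p_j$, and $q \le p_i$ is unaffected. Thus $q$ extends both conditions, and $\p_{\omega_3}$ is $\omega_3$-Knaster.
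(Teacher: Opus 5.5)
\textbf{There is a genuine gap.} Your reduction to $\delta=\omega_3$ is fine, and you correctly locate the difficulty in clause (5) of Lemma \ref{non-inductive characterization}. The proposed repair does not work, and no repair can, because the pairs you are trying to amalgamate may be incompatible.

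Suppose $(\xi,M)\in A_{p_i}$, $\tau\in M\cap\xi$, $\tau\in\dom(s_{p_j})\setminus\dom(s_{p_i})$, and $h_{p_j,\tau}$ sends some $\alpha<M\cap\omega_2$ to an ordinal $\ge M\cap\omega_2$. For instance, take $p_i=(\emptyset,\emptyset,\{(\xi,M)\})$ and $p_j=(\emptyset,\{(\tau,(\{(0,M\cap\omega_2)\},\emptyset))\},\emptyset)$; both are conditions. Any common extension $q$ has $(\xi,M)\in A_q$ and $\tau\in\dom(s_q)$, so (5) puts $M\cap\omega_2$ into $I_{q,\tau}$. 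On the other hand, $q\le p_j$ gives $h_{p_j,\tau}\subseteq h_{q,\tau}$, and then $M\cap\omega_2$ is not closed under $h_{q,\tau}$, contradicting (3c).

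Nothing in your thinning rules this out. Add $\omega_3$ further conditions with pairwise disjoint domains and no models; the whole family then satisfies your steps (1)--(3), yet contains this incompatible pair. Your sketched justification also fails on its own terms. The function $h_{p_j,\tau}$ belongs to $p_j$, so the elementarity and countable closure of a model $M$ taken from $p_i$ say nothing about where its domain and range lie. Extending $p_j$ first only enlarges $h_{p_j,\tau}$, so it can never remove a pair straddling $M\cap\omega_2$. Your optional thinning on the heights $\xi$ is irrelevant; what matters is which ordinals lie in the models.

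\textbf{What is missing} is an additional thinning that makes these cross terms vacuous, and it has to control the models themselves. Since each model has size $\omega_1$, counting with $\omega_2^{\omega}=\omega_2$ is not enough; one needs $2^{\omega_1}=\omega_2$.

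The paper sets $\Mcal_i=\bigcup\{M:(\xi,M)\in A_{p_i}\}$ and passes to a stationary set of indices of cofinality $\omega_2$ on which, for $i<j$:
\begin{itemize}
\item the ordinals of $\Mcal_i$ lie below $j$, and $\Mcal_i\cap i$ is a fixed set $\Mcal$;
\item $\dom(s_{p_i})\subseteq j$, and $\dom(s_{p_i})\cap i$ is a fixed set $d$;
\item $c_{p_i}$ and $s_{p_i}\res d$ are fixed.
\end{itemize}
Then any $\tau$ lying in a model of one condition and in the domain of the other is pushed into $d$, hence into both domains. So (5) for the plain union $q$, with no enlargement of the $I$'s, follows from $p_i$ and $p_j$ being conditions.

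Equivalently, you could apply the $\Delta$-system lemma to the size-$\omega_1$ sets $\dom(s_{p_i})\cup\bigcup\{M\cap\omega_3:(\xi,M)\in A_{p_i}\}$. This is legitimate since $\omega_2^{\omega_1}=\omega_2<\omega_3$ under \textsf{GCH}. Then fix the trace of $\dom(s_{p_i})$ on the root, together with $c_{p_i}$ and $s_{p_i}$ restricted to that trace.
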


\begin{proof}
	If $\delta < \omega_3$, then the statement is immediate by Lemma \ref{centered}. 
	Assume that $\delta = \Delta = \omega_3$. 
	Consider a sequence 
	$\langle p_i : i < \omega_3 \rangle$ of conditions in $\p_{\omega_3}$. 
	For each $i < \omega_3$, let:
	\begin{itemize}
		\item $p_i = (c_i,s_i,A_i)$ and for all $\tau \in \dom(s_i)$, 
		$s_i(\tau) = (h_{i,\tau},I_{i,\tau})$;
		\item $\Mcal_i = \bigcup \{ M : \exists \xi \ (\xi,M) \in A_i \}$.
	\end{itemize}

	Using $\textsf{CH}$ and $2^{\omega_1} = \omega_2$, we can 
	in four successive steps:
	\begin{itemize}
		\item fix a stationary set 
		$A_0 \subseteq \omega_3$ consisting of ordinals of cofinality $\omega_2$ 
		and fix $\Mcal$ such that for all $i < j$ in $A_0$:
		\begin{itemize}
			\item $\Mcal_i \subseteq j$;
			\item $\Mcal_i \cap i = \Mcal$ and $\Mcal_j \cap j = \Mcal$.
		\end{itemize}
		\item fix a stationary set $A_1 \subseteq A_0$ 
		and fix $d$ such that for all $i < j$ in $A_1$:
		\begin{itemize}
			\item $\dom(s_i) \subseteq j$;
			\item $\dom(s_i) \cap i = \dom(s_j) \cap j = d$;
		\end{itemize}
		\item fix a stationary set $A_2 \subseteq A_1$ and 
		fix $c \in \C$ such that for all $i \in A_2$, $c_i = c$;
		\item fix a stationary set $A_3 \subseteq A_2$ and fix a sequence 
		$\langle (h_\tau,I_\tau) : \tau \in d \rangle$ 
		such that for all $i \in A_3$ and for all $\tau \in d$, 
		$h_{i,\tau} = h_\tau$ and $I_{i,\tau} = I_\tau$.
	\end{itemize}

	We claim that for all $i < j$ in $A_3$, $p_i$ and $p_j$ are compatible. 
	So consider $i < j$ in $A_3$. 
	Define $q = (c_q,s_q,A_q)$ as follows. 
	Let $c_q = c$, $A_q = A_i \cup A_j$, and $\dom(s_q) = \dom(s_i) \cup \dom(s_j)$. 
	For all $\tau \in \dom(s_q)$:
	\begin{itemize}
		\item if $\tau \in \dom(s_i) \setminus \dom(s_j)$, let 
		$s_q(\tau) = s_i(\tau)$;
		\item if $\tau \in \dom(s_j) \setminus \dom(s_i)$, let 
		$s_q(\tau) = s_j(\tau)$;
		\item if $\tau \in \dom(s_i) \cap \dom(s_j)$, 
		let $s_q(\tau) = (h_\tau,I_\tau)$.
	\end{itemize}

	We show that $q$ is in $\p_\delta$ and extends $p_i$ and $p_j$. 
	Referring to Lemma \ref{non-inductive characterization}, the only non-trivial 
	thing to check is that for all $(\xi,M) \in A_q$ and for all 
	$\tau \in M \cap \dom(s_q) \cap \xi$, 
	$M \cap \omega_2 \in I_{q,\tau}$ (where $s_q(\tau) = (h_{q,\tau},I_{q,\tau})$). 
	If $(\xi,M) \in A_i$ and $\tau \in \dom(s_i)$, or if 
	$(\xi,M) \in A_j$ and $\tau \in \dom(s_j)$, 
	then we are done since $p_i$ and $p_j$ are conditions. 
	Assume that 
	$(\xi,M) \in A_j$ and $\tau \in \dom(s_i) \setminus \dom(s_j)$. 
	Then $\tau < j$ so $\tau \in M \cap j \subseteq \Mcal \subseteq i$. 
	So $\tau \in \dom(s_i) \cap i = d \subseteq \dom(s_j)$, which is a contradiction. 
	Now assume that $(\xi,M) \in A_i$ and $\tau \in \dom(s_j) \setminus \dom(s_i)$. 
	Then $\tau \in M \subseteq \Mcal_i \subseteq j$, 
	so $\tau \in \dom(s_j) \cap j = d \subseteq \dom(s_i)$, 
	which again is a contradiction.
\end{proof}

\begin{lemma} \label{preservation below is enough}
	Suppose that for all $\delta \le \Delta$ with $\delta < \omega_3$, 
	$\p_\delta$ preserves $\omega_2$. 
	Then $\Delta = \omega_3$ and $\p_{\omega_3}$ preserves $\omega_2$.
\end{lemma}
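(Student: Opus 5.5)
We argue in two stages: first $\Delta = \omega_3$, then preservation of $\omega_2$ by $\p_{\omega_3}$. For the first, we check that the recursion never halts at any successor stage $\delta+1$ with $\delta < \omega_3$. By the recursive definition, halting there means that either $\p_0$ is not a regular suborder of $\p_\delta$, or $\p_\delta$ fails to preserve some cardinal. The first alternative is excluded by Lemma \ref{regular suborder}. For the second:
\begin{itemize}
	\item $\omega_1$ is preserved by Lemma \ref{iteration is omega_1 closed};
	\item $\omega_2$ is preserved by the hypothesis;
	\item every cardinal $\ge \omega_3$ is preserved because $\p_\delta$ is $\omega_3$-c.c.\ by Proposition \ref{chain condition}.
\end{itemize}
Hence every $\p_{\delta+1}$ is defined for $\delta<\omega_3$. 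By the limit clause, $\p_\delta$ is defined at every limit $\delta \le \omega_3$ once all earlier stages exist. So by the definition of $\Delta$, we get $\Delta = \omega_3$.

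For the second stage, suppose toward a contradiction that some $p \in \p_{\omega_3}$ and a $\p_{\omega_3}$-name $\dot g$ satisfy that $p$ forces $\dot g : \omega_1 \to \omega_2$ is onto. By Proposition \ref{chain condition}, $\p_{\omega_3}$ is $\omega_3$-c.c. So for each $i<\omega_1$, the value $\dot g(i)$ is decided by a maximal antichain $A_i$ below $p$ of size at most $\omega_2$. Let $X = \{p\} \cup \bigcup_{i} A_i$. Then $|X| \le \omega_2$, and each condition in $X$ has countable support $\dom(s_q)$. Its side-condition set $A_q$ consists of pairs $(\xi,M)$ with $\xi < \omega_3$. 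Since $\cf(\omega_3) = \omega_3 > \omega_2$, there is a $\delta < \omega_3$ such that every $q \in X$ satisfies $\dom(s_q) \subseteq \delta$ and every $(\xi,M) \in A_q$ has $\xi \le \delta$. We may also take $\delta$ to be a limit ordinal. By Lemma \ref{non-inductive characterization}, $X \subseteq \p_\delta$.

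Next we show each $A_i$ is still a maximal antichain in $\p_\delta$ below $p$. Incompatibility in $\p_{\omega_3}$ implies incompatibility in $\p_\delta$, because $\p_\delta \subseteq \p_{\omega_3}$ with the restricted order. Maximality transfers downward: given $r \le_\delta p$ in $\p_\delta$, it is compatible in $\p_{\omega_3}$ with some $a\in A_i$, say via $t$. Then Lemma \ref{regular suborder} (its third bullet) gives that $t\res\delta$ is a common extension of $r$ and $a$ in $\p_\delta$. Hence $\dot g$ is, equivalently, a $\p_\delta$-name $\dot g'$ built from the same antichains. Because $\p_\delta$ is a regular suborder of $\p_{\omega_3}$, $p$ forces in $\p_\delta$ that $\dot g'$ maps $\omega_1$ onto $\omega_2^V$. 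This contradicts the hypothesis that $\p_\delta$ preserves $\omega_2$.

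The step most likely to need care is the reflection to a stage $\delta$. Specifically, one must ensure that the chosen conditions really lie in $\p_\delta$, including the requirement in clause (4) of Lemma \ref{non-inductive characterization} that side-condition indices be $\le\delta$. This is exactly why $\delta$ is chosen above all indices $\xi$ occurring in the sets $A_q$, and not merely above the supports.
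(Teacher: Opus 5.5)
Your proof is correct and follows the paper's argument: the $\omega_3$-chain condition and the regularity of $\omega_3$ place the $\omega_1$-many antichains of the name inside some $\p_\delta$ with $\delta<\omega_3$, and then you use that $\p_\delta$ is a regular suborder of $\p_{\omega_3}$. You are more explicit than the paper about why $\Delta=\omega_3$ and about bounding the side-condition indices $\xi$ (which the paper covers by saying that every member of $\p_{\omega_3}$ lies in some $\p_\delta$), but the route is the same.
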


\begin{proof}
	The assumption clearly implies that $\Delta = \omega_3$, so it suffices 
	to show that $\p_{\omega_3}$ preserves $\omega_2$. 
	Let $\dot H$ be a nice $\p_{\omega_3}$-name for a function from $\omega_1$ into $\omega_2$. 
	Now every antichain of $\p_{\omega_3}$ has size at most $\omega_2$ by 
	Proposition \ref{chain condition}, and by definition every member of 
	$\p_{\omega_3}$ is a member of $\p_\delta$ for some $\delta < \omega_3$. 
	Consequently, every antichain of $\p_{\omega_3}$ is a subset of $\p_\delta$ 
	for some $\delta < \omega_3$. 
	Therefore, $\dot H$ is actually a nice $\p_{\beta}$-name for some $\beta < \omega_3$. 
	Since $\p_\beta$ preserves $\omega_2$, it forces that $\dot H$ is bounded below $\omega_2$. 
	As $\p_\beta$ is a regular suborder of $\p_{\omega_3}$, $\p_{\omega_3}$ 
	forces the same.
\end{proof}

\section{Preparatory Lemmas} \label{Preparatory Lemmas}

We now move towards the final goal of the article which is the preservation 
of $\omega_2$ by the forcing iteration. 
Before we jump into the proof, we first do some preparation by isolating 
several technical lemmas which we use in the next section.

For the remainder of the section, fix some $\delta \le \Delta$ with $\delta < \omega_3$.

\begin{lemma} \label{dense for range}
	For any $p \in \p_\delta$, there exists $q \le_\delta p$ such that 
	for all $(\beta,\gamma) \in \dom(c_p)$ and for all $\tau \in \dom(s_p)$, 
	if there exists some $w \le_\delta q$ such that $\gamma \in \ran(h_{w,\tau})$, 
	then $\gamma \in \ran(h_{q,\tau})$.
\end{lemma}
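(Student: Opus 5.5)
The plan is a countable bookkeeping argument that builds a descending $\omega$-sequence and takes its greatest lower bound using Lemma \ref{iteration is omega_1 closed}. The key point is that the set of pairs to be handled is fixed in advance by $p$ and is countable.

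Let
$$T = \{ ((\beta,\gamma),\tau) : (\beta,\gamma) \in \dom(c_p), \ \tau \in \dom(s_p) \},$$
which is countable. Enumerate $T$ as $\langle t_n : n < \omega \rangle$ so that each element appears infinitely often (or just once; either works, as explained below). Build a descending sequence $p = q_0 \ge_\delta q_1 \ge_\delta \cdots$ as follows. At stage $n$ let $t_n = ((\beta,\gamma),\tau)$.
\begin{itemize}
\item If there is some $w \le_\delta q_n$ with $\gamma \in \ran(h_{w,\tau})$, let $q_{n+1}$ be such a $w$.
\item Otherwise let $q_{n+1} = q_n$.
\end{itemize}
Note that $\tau \in \dom(s_{q_n})$ always holds, since $\dom(s_p) \subseteq \dom(s_{q_n})$ by clause (c) of Lemma \ref{non-inductive characterization}, so $h_{w,\tau}$ makes sense. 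Let $q$ be the greatest lower bound given by Lemma \ref{iteration is omega_1 closed}. There $h_{q,\tau}$ is the union of the $h_{q_n,\tau}$, so the range of $h_{q,\tau}$ contains each $\ran(h_{q_n,\tau})$.

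To verify the lemma, take $((\beta,\gamma),\tau) \in T$ and suppose some $w \le_\delta q$ has $\gamma \in \ran(h_{w,\tau})$. Let $n$ be a stage with $t_n = ((\beta,\gamma),\tau)$. Then $w \le_\delta q \le_\delta q_n$, so the first case occurred at stage $n$. Hence $\gamma \in \ran(h_{q_{n+1},\tau}) \subseteq \ran(h_{q,\tau})$. This is why a single occurrence of each pair suffices: the witness $w$ below $q$ is automatically below every $q_n$.

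The one point needing care is that ranges are preserved downward. If $w' \le_\delta w$ and $\tau \in \dom(s_w)$, we need $h_{w,\tau} \subseteq h_{w',\tau}$. Clause (d) of Lemma \ref{non-inductive characterization} gives only that $w' \res \tau$ forces $s_{w'}(\tau) \le s_w(\tau)$ in $\dot\q_\tau$. However, the order (a) in Definition \ref{definition of the main forcing} is the literal inclusion $h_{w,\tau} \subseteq h_{w',\tau}$ of ground-model objects, and any forced statement about ground-model objects is simply true. I expect this to be routine.

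Monotonicity of ranges is what makes the greatest lower bound $q$ work. So the only mild obstacle is this observation, and the rest is direct.
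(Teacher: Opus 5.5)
Your proposal is correct and is essentially the paper's proof. Both enumerate the countably many relevant pairs determined by $p$, descend by picking a witness $w$ whenever one exists below the current condition, and take the greatest lower bound from Lemma \ref{iteration is omega_1 closed}, using that any $w \le_\delta q$ also lies below every $q_n$. The only cosmetic differences are that the paper enumerates pairs $(\tau,\gamma)$ injectively with length $k \le \omega$, which also covers the empty case your $\omega$-enumeration glosses over, and that it leaves implicit the monotonicity $h_{w,\tau} \subseteq h_{w',\tau}$, which you justify by absoluteness.
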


\begin{proof}
	Let $X$ be the set of all pairs $(\tau,\gamma)$ such that $\tau$ is 
	in $\dom(s_p)$ and there exists 
	some $\beta$ such that $(\beta,\gamma) \in \dom(c_p)$. 
	Injectively enumerate $X$ as $\langle (\tau_n,\gamma_n) : n < k \rangle$ 
	where $k \le \omega$. 
	Define by induction a sequence of conditions $\langle p_n : n \le k \rangle$ as follows. 
	Let $p_0 = p$. 
	Suppose that $n < k$ and $p_n$ is defined. 
	If there exists some $w \le_\delta p_n$ such that 
	$\gamma_n \in \ran(h_{w,\tau_n})$, 
	then let $p_{n+1}$ be any such condition, and otherwise let 
	$p_{n+1} = p_n$. 
	This completes the construction. 
	If $k < \omega$, let $q = p_k$, and if $k = \omega$, then let 
	$q$ be the greatest lower bound of the sequence $\langle p_n : n < \omega \rangle$ 
	using Lemma \ref{iteration is omega_1 closed}.
	It is straightforward to check that $q$ is as required.
\end{proof}

\begin{defn}
	For any $p \in \p_\delta$, define 
	$I_p = \bigcup \{ I_{p,\tau} : \tau \in \dom(s_p) \}$.
\end{defn}

\begin{defn}
	For any $p \in \p_\delta$ and for any $\beta \in I_p$, 
	define $c_{p,\beta}$ to be the function whose domain is equal to the set 
	$\{ \gamma : (\beta,\gamma) \in \dom(c_p) \}$ such that for any $\gamma$ 
	in the domain of $c_{p,\beta}$, $c_{p,\beta}(\gamma) = c_p(\beta,\gamma)$.
\end{defn}

\begin{defn} \label{definition of E_delta}
	Define $E_\delta$ to be the set of conditions $r \in \p_\delta$ satisfying that 
	for all $\beta \in I_r$ and for all $\tau \in \dom(s_r)$:
		\begin{enumerate}
			\item $\dom(c_{r,\beta}) \subseteq \dom(h_{r,\tau})$;
			\item $\dom(h_{r,\tau}) \cap \beta$ and 
			$\ran(h_{r,\tau}) \cap \beta$ are subsets of $\dom(c_{r,\beta})$;
			\item for all $\gamma \in \dom(c_{r,\beta})$, 
			if there exists some 
			$w \le_\delta r$ such that $\gamma \in \ran(h_{w,\tau})$, then 
			$\gamma \in \ran(h_{r,\tau})$.
		\end{enumerate}
\end{defn}

\begin{lemma} \label{E_delta is dense}
	The set $E_\delta$ is dense in $\p_\delta$.
\end{lemma}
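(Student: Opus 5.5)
Given $p \in \p_\delta$, I will build a descending sequence $\langle p_n : n < \omega \rangle$ below $p$ by interleaving the three closure requirements of Definition \ref{definition of E_delta}. I then take the greatest lower bound $r$ given by Lemma \ref{iteration is omega_1 closed} and check that $r \in E_\delta$. Since every requirement involves only countably many objects at each stage, a standard bookkeeping ($\omega \times \omega$ enumeration) will ensure that every task arising at stage $n$ is handled at some later stage.

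At stage $n$, given $p_n$, I will do three things in turn.

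\textbf{Step 1.} Let $Y_n$ be the countable set of all $\gamma$ such that $(\beta,\gamma) \in \dom(c_{p_n})$ for some $\beta \in I_{p_n}$. Apply Lemma \ref{dense for h} to extend so that $Y_n \subseteq \dom(h_{\cdot,\tau})$ for every $\tau \in \dom(s_{p_n})$.

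\textbf{Step 2.} Let $X_n$ be the countable set of pairs $(\beta,\gamma)$ with $\beta \in I$ of the current condition and $\gamma \in (\dom(h_{\cdot,\tau}) \cup \ran(h_{\cdot,\tau})) \cap \beta$ for some $\tau$ in the current domain. Apply Lemma \ref{dense for c} to put $X_n$ into the domain of $c$.

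\textbf{Step 3.} Apply Lemma \ref{dense for range} to the resulting condition. Call the result $p_{n+1}$.

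Since each lemma only enlarges domains, and the sets $I_{p_n,\tau}$, $\dom(s_{p_n})$ and $\dom(h_{p_n,\tau})$ grow along the sequence, the union $r$ satisfies (1) and (2). Indeed, any $\beta \in I_r$, $\tau \in \dom(s_r)$ and $\gamma$ relevant to (1) or (2) already appear at some finite stage $n$. They are then handled at stage $n+1$, because the sets $Y_{n+1}$ and $X_{n+1}$ are computed from $p_{n+1}$, which already contains these objects.

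The delicate point is (3). Lemma \ref{dense for range} gives the range property only with respect to extensions of the condition $q$ it produces, and only for pairs in $\dom(c_p)\times\dom(s_p)$ of its input. I need the property relative to extensions of the final $r$. This works by monotonicity. Suppose $\gamma \in \dom(c_{r,\beta})$, $\tau \in \dom(s_r)$, and $w \le_\delta r$ has $\gamma \in \ran(h_{w,\tau})$. Choose $n$ large enough that $(\beta,\gamma) \in \dom(c_{p_n})$ and $\tau \in \dom(s_{p_n})$. Then $w \le_\delta r \le_\delta p_{n+1}$, and $p_{n+1}$ was obtained by applying Lemma \ref{dense for range} to a condition extending $p_n$, whose $c$- and $s$-domains therefore contain $(\beta,\gamma)$ and $\tau$. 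So $w$ also extends the condition $q$ produced there, hence $\gamma \in \ran(h_{q,\tau}) \subseteq \ran(h_{r,\tau})$, since $h$-values only grow.

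The main thing to check carefully is this ordering: the application of Lemma \ref{dense for range} at stage $n+1$ must come after all the objects from stage $n$ are in place, and the extension relation must yield $h_{q,\tau} \subseteq h_{r,\tau}$. The latter follows from clause (d) of Lemma \ref{non-inductive characterization} together with clause (a) of the order on $\B$: because $h_{q,\tau}$ and $h_{r,\tau}$ are ground-model objects, forced inclusion is actual inclusion. With this, $r \in E_\delta$ and $r \le_\delta p$.
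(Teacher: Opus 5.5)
Your proposal is correct and is essentially the paper's proof. It uses the same three-step recursion (Lemma \ref{dense for h}, then Lemma \ref{dense for c}, then Lemma \ref{dense for range}), takes the greatest lower bound from Lemma \ref{iteration is omega_1 closed}, and verifies clause (3) exactly as you do via $w \le_\delta r \le_\delta p_{n+1}$; the announced $\omega \times \omega$ bookkeeping is unnecessary (your verification never uses it), since every object present in $p_n$ is already handled in passing from $p_n$ to $p_{n+1}$.
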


\begin{proof}
	Define $\langle p_n : n < \omega \rangle$ by recursion as follows. 
	Let $p_0 = p$. 
	Let $n < \omega$ and assume that $p_n$ is defined. 
	Define $Y_n$ to be the set of all $\gamma$ such that for some $\beta \in I_{p_n}$, 
	$(\beta,\gamma) \in \dom(c_{p_n})$. 
	By Lemma \ref{dense for h}, fix $u_n \le_\delta p_n$ 
	such that for any $\tau \in \dom(s_{p_n})$, 
	$Y_n \subseteq \dom(h_{u_n,\tau})$. 
	Define $X_n$ to be the set of all pairs $(\beta,\gamma)$ such that 
	$\beta \in I_{u_n}$ and for some $\tau \in \dom(s_{u_n})$, 
	$\gamma \in (\dom(h_{u_n,\tau}) \cup \ran(h_{u_n,\tau})) \cap \beta$. 
	By Lemma \ref{dense for c}, fix $v_n \le_\delta u_n$ such that 
	$X_n \subseteq \dom(c_{v_n})$. 
	Now apply Lemma \ref{dense for range} to fix 
	$p_{n+1} \le_\delta v_n$ satisfying that for all 
	$\beta \in I_{v_n}$, for all $\gamma \in \dom(c_{v_n,\beta})$, 
	and for all $\tau \in \dom(s_{v_n})$, if there exists 
	some $w \le_\delta p_{n+1}$ such that $\gamma \in \ran(h_{w,\tau})$, 
	then $\gamma \in \ran(h_{p_{n+1},\tau})$. 
	Let $r$ be the greatest lower bound of $\langle p_n : n < \omega \rangle$. 
	It is routine to check that $r \in E_\delta$.
\end{proof}

\begin{lemma} \label{E_delta is dense 2}
	Suppose that $\omega \le \delta$ and let $\delta^{-}$ be the largest 
	limit ordinal less than or equal to $\delta$. 
	Let $p \in E_\delta$ and suppose that $(\delta^{-},N) \in A_{p}$. 
	Then there exists $q \le_\delta p$ such that $q \in E_\delta$ and for all 
	$(\xi,M) \in A_q$, if $M \cap \omega_2 < N \cap \omega_2$ then 
	$(\sup(M \cap N \cap \xi),M \cap N) \in A_q$.
\end{lemma}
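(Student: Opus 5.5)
The plan is to build $q$ as the greatest lower bound of a countable descending sequence that alternates between adding pairs $(\sup(M \cap N \cap \xi), M \cap N)$ to the side conditions and re-entering $E_\delta$ via Lemma \ref{E_delta is dense}. Lemma \ref{iteration is omega_1 closed} supplies the lower bound.

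\textbf{Adding one pair.} Fix a condition $r \le_\delta p$ and a pair $(\xi,M) \in A_r$ with $M \cap \omega_2 < N \cap \omega_2$. Put $K = M \cap N$, which lies in $\Ycal$ by Theorem \ref{Ycal}(2), and $\sigma = \sup(K \cap \xi)$. Since $K$ is countably closed, $\sigma$ is either a successor or a limit of cofinality at least $\omega_1$; I will check that the pair is of the required kind (a limit ordinal with $\sigma \le \xi$), or else treat the degenerate case separately, e.g.\ by noting that then no coordinate is affected. The candidate is $r' = (c_r, s_r, A_r \cup \{(\sigma,K)\})$.

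By Lemma \ref{non-inductive characterization}, the only thing to verify is clause (5) for the new pair. That is, for every $\tau \in K \cap \dom(s_r) \cap \sigma$ we need $K \cap \omega_2 \in I_{r,\tau}$. Such a $\tau$ lies in $M \cap \dom(s_r) \cap \xi$, so clause (5) for $(\xi,M)$ gives $M \cap \omega_2 \in I_{r,\tau}$. Moreover $K \cap \omega_2 = M \cap \omega_2$, because $M \cap \omega_2$ is an ordinal smaller than $N \cap \omega_2$, hence contained in $N$. So $K \cap \omega_2 \in I_{r,\tau}$, as needed.

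Orderings are unaffected: $r' \le_\delta r$ holds trivially, since only $A$ grew. Membership in $E_\delta$ may fail only through clause (3) of Definition \ref{definition of E_delta}, because $I_{r'} = I_r$ but fewer extensions $w$ exist. So I will not rely on $r'$ staying in $E_\delta$ and will instead restore that property afterwards.

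\textbf{The construction.} I build a descending sequence $\langle p_n : n<\omega\rangle$ with $p_0 = p$. To pass from $p_n$ to $p_{n+1}$:
\begin{itemize}
\item first add, all at once, the pairs $(\sup(M\cap N\cap\xi), M\cap N)$ for every $(\xi,M) \in A_{p_n}$ with $M \cap \omega_2 < N \cap \omega_2$ (there are countably many, and the argument above handles them simultaneously);
\item then extend into $E_\delta$ using Lemma \ref{E_delta is dense}.
\end{itemize}
Let $q$ be the greatest lower bound. Any $(\xi,M) \in A_q$ already lies in some $A_{p_n}$, so its partner pair lies in $A_{p_{n+1}} \subseteq A_q$.

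\textbf{The main obstacle: $q \in E_\delta$.} Clauses (1) and (2) pass to the limit, since each $p_n$ satisfies them and all the sets involved are unions. Clause (3) is the delicate one. Suppose $\gamma \in \dom(c_{q,\beta})$ and some $w \le_\delta q$ has $\gamma \in \ran(h_{w,\tau})$. Then $\gamma$ and $\tau$ already appear at some stage $n$, and $w \le_\delta p_{n+1}$. Since $p_{n+1} \in E_\delta$, we get $\gamma \in \ran(h_{p_{n+1},\tau}) \subseteq \ran(h_{q,\tau})$. This works because the interleaving ensures every $p_{n+1}$ lies in $E_\delta$, so a witness $w$ below $q$ is also below each later $p_m$.

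The one point I expect to need care is the form of the added pair. If $\sup(K\cap\xi)$ is not a limit ordinal, I will use the paper's convention from $D_{\delta,\zeta}$ and take the largest limit ordinal below it, checking that this does not disturb the verification of clause (5). The hypothesis $(\delta^-,N) \in A_p$ seems to be needed only later (for genericity); here it merely guarantees $N \in \Ycal$.
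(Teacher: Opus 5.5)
Your argument is correct, but the $\omega$-stage iteration is unnecessary; the paper does it in one step. It takes $c_q=c_p$, $s_q=s_p$, and adjoins to $A_p$ the partner $(\sup(K\cap\xi),K)$, with $K=M\cap N$, of every $(\xi,M)\in A_p$ such that $M\cap\omega_2<N\cap\omega_2$. It checks clause (5) exactly as you do. It then uses two observations you missed.

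\emph{First, $q\in E_\delta$ is automatic.} Clauses (1) and (2) only mention $c_q=c_p$ and $s_q=s_p$. For clause (3), any $w\le_\delta q$ also satisfies $w\le_\delta p$, so $p\in E_\delta$ gives $\gamma\in\ran(h_{p,\tau})=\ran(h_{q,\tau})$. Your worry that (3) might fail because fewer extensions exist has the logic reversed. The hypothesis of (3) asserts that some extension exists, so passing to a stronger condition can only make (3) easier to satisfy.

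\emph{Second, each new pair is its own partner.} We have $K\cap\omega_2=M\cap\omega_2<N\cap\omega_2$, $K\cap N=K$, and $\sup(K\cap N\cap\sigma)=\sup(K\cap\sigma)=\sigma$. So the closure property already holds after a single step.

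Your construction interleaves with Lemma~\ref{E_delta is dense} and uses that $E_\delta$ is closed under greatest lower bounds of descending sequences. It is valid, and it would be the right tool if adding pairs could break membership in $E_\delta$ or create new obligations. Here neither happens, so it only adds machinery.

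\textbf{One loose end.} You leave open whether $\sigma=\sup(K\cap\xi)$ is a limit ordinal. As a fallback you propose replacing it by the largest limit ordinal below it. That fallback would not produce the pair the lemma asks for. Claim H in the proof of Theorem~\ref{generic condition} also needs every $\tau\in M\cap N\cap\xi$ to lie below the first coordinate. Fortunately the case never arises. Since $K\prec\Acal$, $K$ contains $0$ and is closed under $\alpha\mapsto\alpha+1$. So for a limit $\xi>0$ the set $K\cap\xi$ is nonempty with no largest element. Hence $\sigma$ is a limit ordinal with $\sigma\le\xi$, and the pair is of the required form with no adjustment. Your side remark that $\sigma$ is either a successor or of uncountable cofinality is also inaccurate; for example, $\sigma=\xi$ when $\xi\in K$ has countable cofinality. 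That remark is not needed, though.
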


\begin{proof}
	Define $q = (c_q,s_q,A_q)$ by letting $c_q = c_p$, $s_q = s_p$, and 
	$$
	A_q = A_p \cup \{ (\sup(K \cap N \cap \sigma),K \cap N) : (\sigma,K) \in A_p, \ 
	K \cap \omega_2 < N \cap \omega_2 \}.
	$$
	We claim that $q$ is in $\p_\delta$ and extends $p$. 
	The only non-trivial thing to check is (5) of 
	Lemma \ref{non-inductive characterization}. 
	Consider $(\sigma,K) \in A_p$ with 
	$K \cap \omega_2 < N \cap \omega_2$ and 
	$\tau \in (K \cap N) \cap \dom(s_q) \cap \sup(K \cap N \cap \sigma)$. 
	Then $\tau \in K \cap \dom(s_p) \cap \sigma$, so 
	$K \cap \omega_2 = K \cap N \cap \omega_2$ is in $I_{p,\tau} = I_{q,\tau}$. 
	As $c_q = c_p$, $s_q = s_p$, and $p \in E_\delta$, by the definition of 
	$E_\delta$ it easily 
	follows that $q \in E_\delta$.

	Suppose that $(\xi,M) \in A_q$ and $M \cap \omega_2 < N \cap \omega_2$. 
	We show that $(\sup(M \cap N \cap \xi),M \cap N) \in A_q$. 
	If $(\xi,M) \in A_p$, then we are done by the definition of $q$. 
	Otherwise, for some $(\sigma,K) \in A_p$ with $K \cap \omega_2 < N \cap \omega_2$, 
	$(\xi,M) = (\sup(K \cap N \cap \sigma),K \cap N)$. 
	Hence, $\xi = \sup(K \cap N \cap \sigma)$ and $M = K \cap N$. 
	So $\sup(M \cap N \cap \xi) = 
	\sup(K \cap N \cap \sup(K \cap N \cap \sigma))$, which is easily shown 
	to be equal to $\sup(K \cap N \cap \sigma) = \xi$. 
	And also $M \cap N = M$. 
	So $(\sup(M \cap N \cap \xi),M \cap N) = (\xi,M) \in A_q$.
\end{proof}

\begin{lemma} \label{final dense set}
	Let $p \in \p_\delta$ and let $x$ be a subset of $\dom(s_p)$. 
	Let $\zeta < \omega_1$. 
	Then there exist $q \le_\delta p$, 
	countable limit ordinals $\zeta_0$ and $\zeta_1$, and a sequence 
	$\langle F_\tau : \tau \in x \rangle$ satisfying:
	\begin{enumerate}
		\item $\zeta \le \zeta_0 < \zeta_0 + \omega \le \zeta_1$;
		\item for all $\tau \in x$:
		\begin{enumerate}
			\item $F_\tau : \zeta_1 \to \zeta_1$;
			\item $q \res \tau \Vdash_\tau \dot f_\tau \res \zeta_1 = \check F_\tau$;
			\item $F_\tau[\zeta] \subseteq \zeta_0$;
		\end{enumerate}
		\item for all $j < \zeta_1$, the set 
		$\{ F_\tau(j) : \tau \in x \}$ is bounded below $\zeta_1$.
	\end{enumerate}	
\end{lemma}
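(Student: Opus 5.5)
This is a countable closure argument. I build a descending sequence of conditions and an increasing sequence of countable ordinals of length $\omega$, and take greatest lower bounds using Lemma \ref{iteration is omega_1 closed}. The key tool is that $\p_\tau$ is $\omega_1$-closed and $\dot f_\tau$ is a $\p_\tau$-name for a function from $\omega_1$ to $\omega_1$. Hence for any condition $r$ and any countable ordinal $\eta$, there is $r' \le_\tau r$ deciding $\dot f_\tau \res \eta$ as a ground-model function. Since $x$ is countable, I handle all $\tau \in x$ simultaneously by a further inner $\omega$-length bookkeeping.

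\textbf{Single decision step.} Given $r \in \p_\delta$ and a countable ordinal $\eta$, I claim there is $r' \le_\delta r$ such that for every $\tau \in x$, $r' \res \tau$ decides $\dot f_\tau \res \eta$.

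To prove it, enumerate $x$ as $\langle \tau_n : n<\omega\rangle$. At step $n$, take the current condition $r_n$ and use $\omega_1$-closure of $\p_{\tau_n}$ to find $u \le_{\tau_n} r_n \res \tau_n$ deciding $\dot f_{\tau_n}\res\eta$. I first extend $r_n$ into $D_{\delta,\tau_n}$ when $\tau_n\ge\omega$, using Lemma \ref{dense set}. Then Lemma \ref{projection} lets me amalgamate $u$ with $r_n$ to get $r_{n+1} \le_\delta r_n$ with $r_{n+1}\res\tau_n \le u$.

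Let $r'$ be the greatest lower bound. Then $r' \res \tau_n \le_{\tau_n} r_{n+1}\res\tau_n$ for each $n$, so the decisions persist. This uses that $s$ and $A$ only grow, $x \subseteq \dom(s_p)$ stays inside the domain, and restrictions commute with glb's by the explicit form in Lemma \ref{iteration is omega_1 closed}.

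\textbf{Outer construction.} Set $\eta_0 = \zeta$ and $q_0 = p$. At stage $k$, apply the single decision step with $\eta = \eta_k$ to get $q_{k+1} \le q_k$ and functions $F^k_\tau : \eta_k \to \omega_1$. Then choose a countable limit ordinal $\eta_{k+1} > \eta_k$ large enough that:
\begin{itemize}
\item $\eta_{k+1}$ exceeds $\sup\{F^k_\tau(j)+1 : \tau\in x, j<\eta_k\}$, which is a countable supremum of countable ordinals;
\item $\eta_{k+1} \ge \eta_k+\omega$.
\end{itemize}

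Set $\zeta_0 = \eta_1$, which is a countable limit ordinal with $\zeta_0 \ge \zeta$ and $F_\tau[\zeta] = F^0_\tau[\zeta] \subseteq \eta_1$. To make $\zeta_0$ a limit I take each $\eta_k$ to be a limit. Let $\zeta_1 = \sup_k \eta_k$, a countable limit ordinal with $\zeta_1 \ge \zeta_0+\omega$. Let $q$ be the glb of the $q_k$, and let $F_\tau = \bigcup_k F^k_\tau$. These decisions cohere because they are made by compatible (descending) conditions, so $q\res\tau \Vdash \dot f_\tau\res\zeta_1 = \check F_\tau$.

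\textbf{Checking the conclusions.} Clause (2a) holds because each $j<\zeta_1$ lies below some $\eta_k$, so $F_\tau(j) < \eta_{k+1} < \zeta_1$. Clause (3) holds for the same reason: $\{F_\tau(j):\tau\in x\}$ is bounded by $\eta_{k+1} < \zeta_1$.

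The only point requiring care is the single decision step, namely transferring the decision made in $\p_{\tau}$ back up to $\p_\delta$. This is exactly what Lemma \ref{projection} (after passing into $D_{\delta,\tau}$) provides. I expect the rest to be routine.
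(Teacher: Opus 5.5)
Your proposal is correct and follows essentially the same route as the paper. The paper likewise uses Lemmas \ref{dense set} and \ref{projection} to see that the conditions whose restriction to $\tau$ decides $\dot f_\tau \res i$ are dense in $\p_\delta$, combines this with $\omega_1$-closure to handle all $\tau \in x$ at once, and builds $\iota_0 = \zeta < \iota_1 < \cdots$ with $F_{\tau,n}[\iota_n] \subseteq \iota_{n+1}$, setting $\zeta_0 = \iota_1$ and $\zeta_1 = \sup_n \iota_n$. Your explicit choice of each $\eta_{k+1}$ as a limit ordinal, which guarantees that $\zeta_0$ is a limit, is a minor tidying of a detail the paper leaves implicit.
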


\begin{proof}
	Note that by Lemmas \ref{dense set} and \ref{projection}, 
	for any $i < \omega_1$ and for any $\tau < \delta$, 
	the set of $q \in \p_\delta$ such that $q \res \tau$ decides (in $\p_\tau$) 
	$\dot f_\tau \res i$ is dense in $\p_\delta$. 
	Define by recursion sequences $\langle p_n : n < \omega \rangle$, 
	$\langle \iota_n : n < \omega \rangle$, and 
	$\langle F_{\tau,n} : \tau \in x, \ n < \omega \rangle$ as follows. 
	Using the fact stated above 
	together with Lemma \ref{iteration is omega_1 closed}, we can 
	find $p_0 \le_\delta p$ such that for all $\tau \in x$, 
	there exists $F_{\tau,0}$ such that 
	$p_0 \res \tau \Vdash_\tau \dot f_\tau \res \zeta = F_{\tau,0}$. 
	Let $\iota_0 = \zeta$.

	Let $n < \omega$ and assume that we have defined 
	$p_n$, $\iota_n$, and $F_{\tau,n}$ for all $\tau \in x$. 
	Also, assume as an inductive hypothesis that $\iota_n < \omega_1$ 
	and for all $\tau \in x$, 
	$p_n \res \tau \Vdash_\tau \dot f_\tau \res \iota_n = F_{\tau,n}$. 
	Choose a countable ordinal $\iota_{n+1} > \iota_n$ large enough so that 
	for all $\tau \in x$, $F_{\tau,n}[\iota_n] \subseteq \iota_{n+1}$. 
	Now apply the fact described in the previous paragraph to 
	fix some $p_{n+1} \le_\delta p_n$ and a sequence 
	$\langle F_{\tau,n+1} : \tau \in x \rangle$ 
	such that for all $\tau \in x$, 
	$p_{n+1} \res \tau \Vdash_\tau \dot f_\tau \res \iota_{n+1} = F_{\tau,n+1}$. 
	
	This complete the construction. 
	Let $q$ be the greatest lower bound of $\langle p_n : n < \omega \rangle$ 
	and for each $\tau \in x$, let $F_{\tau} = \bigcup_n F_{\tau,n}$. 
	Define $\zeta_0 = \iota_1$ and $\zeta_1 = \sup_n \iota_n$. 
	It is straightforward to check that these objects satisfy the required properties.
\end{proof}

\section{Preserving Cardinals, 2} \label{Preserving Cardinals, 2}

We now prove the preservation of $\omega_2$ by the forcing iteration, 
which completes our work. 

\begin{defn}
	For any $\delta \le \Delta$, define $\delta^{-}$ as follows:
	\begin{itemize}
		\item if $\delta \ge \omega$, then $\delta^{-}$ is the largest limit 
		ordinal less than or equal to $\delta$;
		\item if $\delta < \omega$, then $\delta^{-} = 0$.
	\end{itemize}
\end{defn}

\begin{defn}
	Let $1 \le \delta \le \Delta$. 
	Suppose that $N \in \Ycal^+$, $p \in N \cap \p_\delta$, and 
	$[\delta^{-},\delta) \subseteq \dom(s_p)$. 
	Define $p + (\delta,N)$ to be the ordered triple $(c,A,s)$ satisfying:
	\begin{enumerate}
		\item $c = c_p$;
		\item if $\delta < \omega$ then $A = A_p$, and if $\delta \ge \omega$ then 
		$A = A_p \cup \{ (\delta^{-},N) \}$;
		\item $\dom(s) = \dom(s_p)$;
		\item for all $\tau \in \dom(s)$:
		\begin{enumerate}
			\item if $\tau \notin N$, then $s(\tau) = s_p(\tau)$;
			\item if $\tau \in N$, then 
			$s(\tau) = (h_{p,\tau},I_{p,\tau} \cup \{ N \cap \omega_2 \})$.
		\end{enumerate}
	\end{enumerate}
\end{defn}

\begin{lemma}
	Let $1 \le \delta \le \Delta$. 
	For any $N \in \Ycal^+$ and for any $p \in N \cap \p_\delta$, 
	$p + (\delta,N)$ is a condition in $\p_\delta$ which extends $p$.
\end{lemma}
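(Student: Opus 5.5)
We verify the membership clauses of Lemma \ref{non-inductive characterization} for $q = p + (\delta,N)$, and then the extension clauses (a)--(d). Note that the hypothesis $[\delta^{-},\delta) \subseteq \dom(s_p)$ is part of the definition of $p+(\delta,N)$ and is assumed here.

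\textbf{Clauses (1), (2), (4).} These are immediate. We have $c_q = c_p \in \C$ and $\dom(s_q) = \dom(s_p)$. For (4), when $\delta \ge \omega$ the new pair $(\delta^{-},N)$ is legitimate: $\delta^{-} \le \delta$, $\delta^{-}<\omega_3$, $\delta^{-}$ is a limit ordinal, and $N \in \Ycal^+ \subseteq \Ycal$.

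\textbf{Clause (3) for $\tau \in N$.} Here $I_{q,\tau} = I_{p,\tau} \cup \{N \cap \omega_2\}$, and we must check three facts.
\begin{itemize}
\item $I_{q,\tau}$ is countable. This is clear.
\item $N \cap \omega_2 \in S^2_1$. Since $N^\omega \subseteq N$ and $|N| = \omega_1$ with $\omega_1 \subseteq N$, $N\cap\omega_2$ is an ordinal of cofinality $\omega_1$: it cannot have countable cofinality by $\omega$-closure, and it cannot be a successor.
\item $N \cap \omega_2$ is closed under $h_{p,\tau}$ and $h_{p,\tau}^{-1}$. Since $p \in N$ and $\tau \in N$, we get $s_p(\tau) \in N$ by elementarity. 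Then $h_{p,\tau}$ is a countable set in $N$, so $h_{p,\tau} \subseteq N$ (as $\omega_1 \subseteq N$). Hence for $\alpha \in \dom(h_{p,\tau})$, both $\alpha$ and $h_{p,\tau}(\alpha)$ lie in $N \cap \omega_2$. In particular $\dom(h_{p,\tau}) \cup \ran(h_{p,\tau}) \subseteq N \cap \omega_2$, and closure follows trivially.
\end{itemize}
This is the key use of $p \in N$.

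\textbf{Clause (5).} Take $(\xi,M) \in A_q$ and $\tau \in M \cap \dom(s_q) \cap \xi$. If $(\xi,M) \in A_p$, then $M \cap \omega_2 \in I_{p,\tau} \subseteq I_{q,\tau}$. Otherwise $(\xi,M) = (\delta^{-},N)$ and $\tau \in N$, so by definition $N\cap\omega_2 \in I_{q,\tau}$.

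\textbf{Extension $q \le_\delta p$.} Clauses (a)--(c) are clear. For (d), fix $\tau \in \dom(s_p)$ and work in the forcing for $\B(\dot{\vec\pi},\dot f_\tau)$. Since $h_{q,\tau} = h_{p,\tau}$, the set $\dom(h_{q,\tau}) \setminus \dom(h_{p,\tau})$ is empty. So clause (c) of Definition \ref{definition of the main forcing} holds vacuously, and (a), (b) there hold because $h_{p,\tau} \subseteq h_{q,\tau}$ and $I_{p,\tau} \subseteq I_{q,\tau}$. Thus $\Vdash_\tau s_q(\tau) \le s_p(\tau)$ outright, regardless of $q\res\tau$.

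\textbf{Main point.} The only step with real content is the closure of $N\cap\omega_2$ under $h_{p,\tau}^{\pm1}$. As shown above, it comes from $s_p(\tau) \in N$ together with the fact that countable elements of $N$ are subsets of $N$. The hypothesis $N \in \Ycal^+$ is not needed beyond $N \in \Ycal$ for this lemma.
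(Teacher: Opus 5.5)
Your proof is correct and is the same routine clause-by-clause check against Lemma \ref{non-inductive characterization} that the paper intends; the only substantive point is the closure of $N \cap \omega_2$ under $h_{p,\tau}^{\pm 1}$, which follows because $s_p(\tau) \in N$ is countable. One caveat: your assertion $\delta^{-} < \omega_3$ fails when $\delta = \Delta = \omega_3$, since then $\delta^{-} = \omega_3$ and $(\omega_3,N)$ is not an admissible pair in $A_q$, so the lemma (yours and the paper's alike) must be read for $\delta < \omega_3$, which is the only case the paper ever applies.
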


The proof is routine using Lemma \ref{non-inductive characterization}.

\begin{lemma} \label{point of p + (delta,N)}
	Let $1 \le \delta \le \Delta$. 
	Suppose that $N \in \Ycal^+$, $p \in N \cap \p_\delta$, and 
	$[\delta^{-},\delta) \subseteq \dom(s_p)$. 
	If $r \le_\delta p + (\delta,N)$, then for all $\tau \in N \cap \dom(s_r)$, 
	$N \cap \omega_2 \in I_{r,\tau}$.
\end{lemma}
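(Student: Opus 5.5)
The plan is to split according to where $\tau$ lies relative to $\delta^{-}$. Write $p' = p + (\delta,N)$ and fix $r \le_\delta p'$ and $\tau \in N \cap \dom(s_r)$. The two tools are clause (5) of Lemma \ref{non-inductive characterization} applied to $r$, and clause (d) of the ordering in that lemma.

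First suppose $\tau \in [\delta^{-},\delta)$. This covers every $\tau$ in the case $\delta < \omega$, since then $\delta^{-} = 0$ and $\dom(s_r) \subseteq \delta$.
\begin{enumerate}
\item By hypothesis $[\delta^{-},\delta) \subseteq \dom(s_p) = \dom(s_{p'})$, so $\tau \in \dom(s_{p'})$.
\item Since also $\tau \in N$, clause (4)(b) of the definition of $p + (\delta,N)$ gives $N \cap \omega_2 \in I_{p',\tau}$.
\item By clause (d) of the ordering, $r \res \tau \Vdash_\tau s_r(\tau) \le_{\dot \q_\tau} s_{p'}(\tau)$.
\item By clause (b) of the order on $\B(\dot{\vec \pi},\dot f_\tau)$, $r \res \tau$ forces $I_{p',\tau} \subseteq I_{r,\tau}$.
\item Both sets are ground-model objects named by check names, and $\Vdash_\tau$ of an inclusion between check names is absolute. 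Hence $I_{p',\tau} \subseteq I_{r,\tau}$ really holds, and so $N \cap \omega_2 \in I_{r,\tau}$.
\end{enumerate}

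Now suppose $\tau < \delta^{-}$, which can only happen when $\delta \ge \omega$.
\begin{enumerate}
\item In this case $(\delta^{-},N) \in A_{p'}$, and since $A_{p'} \subseteq A_r$ by clause (b) of the ordering, $(\delta^{-},N) \in A_r$.
\item We have $\tau \in N \cap \dom(s_r) \cap \delta^{-}$.
\item Clause (5) of Lemma \ref{non-inductive characterization} applied to the condition $r$ then gives $N \cap \omega_2 \in I_{r,\tau}$ directly.
\end{enumerate}

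The only point needing care is the transfer step (5) in the first case, from the forcing statement to an actual inclusion of $I$-components. This is routine because the ordering of $\B$ at the component $I$ is plain inclusion of sets from $V$. Everything else is bookkeeping with the non-inductive characterization.
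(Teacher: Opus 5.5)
Your proof is correct and is essentially the paper's argument. The paper splits on whether $\tau \in \dom(s_p)$, using the inclusion $I_{p+(\delta,N),\tau} \subseteq I_{r,\tau}$, versus $\tau \notin \dom(s_p)$, which forces $\tau < \delta^{-}$ and then uses $(\delta^{-},N) \in A_r$ with clause (5). You split directly on $\tau \ge \delta^{-}$ versus $\tau < \delta^{-}$ with the same two tools, and you also spell out why the forced inclusion of check names gives an actual inclusion, which the paper leaves implicit.
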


\begin{proof}
	Let $\tau \in N \cap \dom(s_r)$. 
	If $\tau \in \dom(s_p)$, then by the definition of $p + (\delta,N)$, 
	$N \cap \omega_2 \in I_{p + (\delta,N),\tau} \subseteq I_{r,\tau}$. 
	Suppose that $\tau \in \dom(s_r)$. 
	If $\delta \ge \omega$ and $\tau < \delta^{-}$, then since $(\delta^{-},N) \in A_r$, 
	$N \cap \omega_2 \in I_{r,\tau}$ since $r$ is a condition. 
	Otherwise, either $\delta^{-} = 0 \le \tau < \delta < \omega$, 
	or else $\delta \ge \omega$ and 
	$\delta^{-} \le \tau < \delta$. 
	In either case, $\tau \in \dom(s_p)$.
\end{proof}

The next proposition is essentially the heart of the article.

\begin{thm} \label{generic condition}
	Let $1 \le \delta \le \Delta$ and 
	let $\chi > \omega_3$ be a regular cardinal. 
	Suppose that $N^* \prec H(\chi)$ satisfies that 
	$N = N^* \cap H(\omega_3) \in \Ycal^+$ and 
	$\langle \p_\beta : \beta \le \delta \rangle \in N^*$. 
	Assume that $p \in N \cap \p_\delta$ and 
	$[\delta^{-},\delta) \subseteq \dom(s_p)$. 
	Then $p + (\delta,N)$ is $(N^*,\p_\delta)$-generic.
\end{thm}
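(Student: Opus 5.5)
We will prove genericity by induction on $\delta$, in the usual side-condition style. Fix a dense open set $D \in N^*$ and $r \le_\delta p + (\delta,N)$; we must find $w \in D \cap N^*$ compatible with $r$. First extend $r$ into $E_\delta \cap D$ by Lemma \ref{E_delta is dense}. Then, when $\delta \ge \omega$, use Lemma \ref{E_delta is dense 2} (since $(\delta^{-},N) \in A_r$) so that $A_r$ is closed under $(\xi,M) \mapsto (\sup(M\cap N\cap\xi), M\cap N)$ for $M \cap \omega_2 < N \cap \omega_2$. Finally apply Lemma \ref{final dense set} to the countable set $x = \dom(s_r) \cap N$ to decide the relevant initial segments of the $\dot f_\tau$.

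Next form the ``trace'' $r \cap N$:
\begin{itemize}
\item $c_r \cap N$ (countable subsets of $N$ lie in $N$ since $N^\omega \subseteq N$);
\item $s_r$ restricted to $\dom(s_r)\cap N$, with each $h_{r,\tau}$ cut down to $N \cap \omega_2$ and $I_{r,\tau} \cap N$;
\item the pairs $(\xi,M) \in A_r$ with $M \in N$.
\end{itemize}
Theorem \ref{Ycal}(4) and the closure from Lemma \ref{E_delta is dense 2} guarantee that the models $M \cap N$ appear, so this residue is a condition in $N$ (checking clause (5) of Lemma \ref{non-inductive characterization}). By elementarity of $N^*$, find $w \in D \cap N^*$ with $w \le r\cap N$. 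We also arrange, by a reflection argument in $N^*$, that $w$ is chosen so that the new values of $h_{w,\tau}$ and the new $c_w$-values avoid a prescribed countable set and lie above the bounds $\zeta_1$ coming from Lemma \ref{final dense set}.

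Then amalgamate $r$ and $w$: take $c = c_r \cup c_w$, $A = A_r \cup A_w$, and coordinatewise unions of $h$ and $I$. Verifying that this is a condition is routine except for injectivity and closure of $I$-ordinals under $h$, which use clauses (2) and (3) of $E_\delta$. The main obstacle is showing that the amalgam extends $r$ at each coordinate $\tau$, i.e.\ the order clause (c) of Definition \ref{definition of the main forcing} for $\beta \in I_{r,\tau}$ with $\beta \ge N\cap\omega_2$ and new points $\alpha \in \dom(h_w)\setminus\dom(h_r)$. Here we need
\[
f_\tau(\min\{\pi_\beta(\alpha),\pi_\beta(h(\alpha))\}) < \max\{\pi_\beta(\alpha),\pi_\beta(h(\alpha))\},
\]
where $\pi_\beta$ restricted to the new points is controlled by $c_w$, which lives in $N$. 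This is handled by the genericity of $\vec\pi$: $E_\delta$ clause (2) ensures $c_r$ already decides $\pi_\beta$ on $\dom(h_r)\cap\beta$. The new $\pi_\beta$-values on points of $N \setminus \dom(c_{r,\beta})$ are then assigned by us, in the amalgam, to be large (above $\zeta_1$ and above $f_\tau$-images of old values, using clause (3) of Lemma \ref{final dense set} to bound $\{F_\tau(j)\}$ uniformly). Since $\beta\notin N$ when $\beta > N\cap\omega_2$, $c_w$ contains no $(\beta,\cdot)$ entries, so this freedom exists.

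The induction on $\delta$ is used at limit stages to pass to $\p_\tau$ for $\tau \in N$, applying Lemma \ref{projection} with the dense sets $D_{\delta,\zeta}$. It is also used at successor stages through the two-step equivalence, Lemma \ref{two step equivalence}, with Lemma \ref{point of p + (delta,N)} guaranteeing $N\cap\omega_2 \in I_{r,\tau}$ for the relevant coordinates.
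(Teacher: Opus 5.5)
\textbf{There are genuine gaps.} The overall shape matches the paper: trace $r$ on $N$, reflect into $N^*$, amalgamate, and assign fresh $\pi_\beta$-values for $\beta\in I_r\setminus\theta$, where $\theta=N\cap\omega_2$. However, the step you call the main obstacle is not carried out correctly.

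\textbf{Where the fresh values go and where $f_\tau$ is decided.} You place the new $\pi_\beta$-values above $\zeta_1$. But $\dot f_\tau$ is decided only on $\zeta_1$. When both values in a comparison are new, nothing decides $f_\tau$ at the smaller one, so the inequality cannot be forced. This happens for $\pi_\beta(\alpha)$ versus $\pi_\beta(h(\alpha))$, and for two new range values in clause (ii).

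The paper does the following instead:
\begin{itemize}
\item It fixes $\zeta$ with $\ran(c_r)\subseteq\zeta$ and arranges $F_\tau[\zeta]\subseteq\zeta_0$.
\item It puts the new values in $[\zeta_0,\zeta_1)$, chosen one at a time along an enumeration. Each new value exceeds all earlier new values and all their $F_\tau$-images, for every $\tau\in\dom(s_r)\cap N$ simultaneously. Clause (3) of Lemma \ref{final dense set} is exactly what makes this recursion possible.
\item Old-versus-new pairs are then handled by $F_\tau[\zeta]\subseteq\zeta_0$.
\end{itemize}
For this to work, $\dot f_\tau\res\zeta_1$ must be decided by a condition in $N^*$ (the paper's $v\le\bar r$), not by extending $r$ itself as you do. An extension of $r$ deciding $f_\tau$ may add $c$-values, and new $h$-range points, at $\beta\in I_r\setminus\theta$ anywhere, including inside $[\zeta_0,\zeta_1)$. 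Re-bounding $\ran(c_r)$ then requires deciding more of $f_\tau$, which requires a further extension, so the process is circular.

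\textbf{Missing Claim A.} Suppose a new $\alpha\in\dom(h_{w,\tau})\setminus\dom(h_{r,\tau})$ is sent by $h_{w,\tau}$ to a point already in $\dom(c_{r,\beta})$. Then clause (ii), tested against some $\gamma\in\dom(h_{r,\tau})\cap\beta$, compares two $\pi_\beta$-values that are both old, and nothing controls that inequality. The paper rules this out using (13) together with clause (3) of Definition \ref{definition of E_delta}, applied to the reflected $\bar r$. This makes the exclusion persist under the extension $v\le\bar r$ that decides $f_\tau$. Your ``avoid a prescribed countable set'' would have to be precisely $Z\cap\theta$, and the avoidance must survive that extension. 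You give no mechanism for either requirement.

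\textbf{The amalgam is not a condition.} Taking coordinatewise unions fails clause (5) of Lemma \ref{non-inductive characterization}. For example, let $\delta\ge\omega$ and $\tau\in(\dom(s_w)\setminus\dom(s_r))\cap\delta^-$. Clause (5) for $(\delta^-,N)\in A_r$ demands $\theta\in I_\tau$, but $I_\tau=I_{w,\tau}\subseteq\theta$. The paper's case (I) repairs this by adding every $K\cap\omega_2\ge\theta$ with $(\beta,K)\in A_r$ and $\tau\in K\cap\beta$. The closure under $M\mapsto M\cap N$ from Lemma \ref{E_delta is dense 2} is needed exactly here, for models with $K\cap\omega_2<\theta$ (Claim H). It is not needed to make the trace a condition, which is automatic.

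\textbf{The induction on $\delta$.} It plays no role in what you actually sketch. The paper's argument is direct and handles all coordinates in $\dom(s_r)\cap N$ at once.
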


The proof of this result actually shows that $p + (\delta,N)$ 
is strongly $(N^*,\p_\delta)$-generic, 
but we do not need this fact for our purposes.

Before embarking on the difficult proof of Theorem \ref{generic condition}, 
we first observe that by standard proper forcing arguments this theorem allows us to 
complete the proof of the main result of the article.

\begin{prop} \label{preserves omega_2}
	For any $\delta \le \Delta$, $\p_\delta$ preserves $\omega_2$.
\end{prop}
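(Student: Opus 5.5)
We will derive Proposition \ref{preserves omega_2} from Theorem \ref{generic condition} by the standard argument that the existence of generic conditions for stationarily many models of size $\omega_1$ prevents $\omega_2$ from being collapsed. First consider $\delta = 0$: then $\p_0$ is isomorphic to $\C$, which is $\omega_2$-c.c.\ under \textsf{CH}, so it preserves $\omega_2$. By Lemma \ref{preservation below is enough}, it then suffices to treat $\delta \le \Delta$ with $1 \le \delta < \omega_3$. (Alternatively, the argument below works uniformly for $\delta = \omega_3 = \Delta$ as well, since Theorem \ref{generic condition} allows $\delta \le \Delta$.)

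Fix such a $\delta$. Suppose toward a contradiction that some $p_0 \in \p_\delta$ and some $\p_\delta$-name $\dot H$ satisfy $p_0 \Vdash \dot H : \omega_1 \to \omega_2$ is onto. Let $\chi > \omega_3$ be regular. Since $\Ycal^+$ is stationary in $[H(\omega_3)]^{\omega_1}$, the set of $N^* \prec H(\chi)$ with $N^* \cap H(\omega_3) \in \Ycal^+$ is stationary in $[H(\chi)]^{\omega_1}$, using the standard lifting of stationarity from $H(\omega_3)$ to $H(\chi)$. We may therefore fix such an $N^*$ containing $\langle \p_\beta : \beta \le \delta \rangle$, $p_0$, and $\dot H$. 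Note that $\omega_1 \subseteq N^*$, since $N = N^* \cap H(\omega_3)$ has size $\omega_1$, is elementary, and is countably closed.

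Next we extend $p_0$ inside $N^*$ to a condition $p \in N \cap \p_\delta$ with $[\delta^{-},\delta) \subseteq \dom(s_p)$. This is possible because $[\delta^-,\delta)$ is finite and belongs to $N^*$ (as $\delta \in N^*$). Applying Lemma \ref{domain dense} finitely many times, together with elementarity, yields such a $p \in N^*$. Then by Theorem \ref{generic condition}, $q = p + (\delta,N)$ is $(N^*,\p_\delta)$-generic and extends $p_0$.

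Now let $G$ be generic with $q \in G$. By genericity, $N^*[G] \cap \mathrm{Ord} = N^* \cap \mathrm{Ord}$. Since $\dot H \in N^*$ and $\omega_1 \subseteq N^*$, each $\dot H^G(i)$ for $i < \omega_1$ lies in $N^*[G] \cap \omega_2 = N \cap \omega_2$, which is a countable-cofinality-free ordinal below $\omega_2$. So $\ran(\dot H^G)$ is bounded by $N \cap \omega_2 < \omega_2$, contradicting surjectivity. Hence $\p_\delta$ preserves $\omega_2$.

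The only delicate point is making sure the hypotheses of Theorem \ref{generic condition} are met, namely $N \in \Ycal^+$ and $[\delta^-,\delta) \subseteq \dom(s_p)$, with $p$ chosen inside $N$. The stationarity transfer and the domain extension handle these. There is a mild circularity to watch: $\Delta$ is defined in terms of the recursion. We proceed by induction on $\delta$, since Theorem \ref{generic condition} is stated for all $\delta \le \Delta$. Combined with Lemma \ref{preservation below is enough} and Proposition \ref{chain condition}, this shows the recursion never halts and $\Delta = \omega_3$.
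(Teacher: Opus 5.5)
Your proof is correct and follows the paper's argument essentially step for step. You reduce to $1 \le \delta < \omega_3$ via Lemma \ref{preservation below is enough}, with $\delta = 0$ handled by the $\omega_2$-c.c.\ of $\C$. You arrange $[\delta^{-},\delta) \subseteq \dom(s_p)$ and choose $N^*$ with $N^* \cap H(\omega_3) \in \Ycal^+$. Then the $(N^*,\p_\delta)$-genericity of $p + (\delta,N)$ from Theorem \ref{generic condition} bounds the name by $N \cap \omega_2$. Extending $p$ inside $N^*$ by elementarity, rather than before choosing $N^*$ as the paper does, is only a cosmetic difference.

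Do drop the parenthetical claim that the argument works verbatim for $\delta = \omega_3$. There $\delta^{-} = \omega_3$, so $p + (\omega_3,N)$ would contain the pair $(\omega_3,N)$, which conditions of $\p_{\omega_3}$ do not allow. The reduction through Lemma \ref{preservation below is enough} is therefore genuinely needed.
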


\begin{proof}
	By Lemma \ref{preservation below is enough}, we may assume that $\delta < \omega_3$. 
	If $\delta = 0$, then $\p_\delta$ is isomorphic to $\C$ which is $\omega_2$-c.c. 
	So assume that $1 \le \delta < \omega_3$. 
	Let $\dot F$ be a $\p_\delta$-name for a function from $\omega_1$ into $\omega_2$. 
	We show that for all $p \in \p_\delta$, there exists $q \le_\delta p$ 
	which forces that $\dot F$ is bounded below $\omega_2$. 
	Let $p \in \p_\delta$. 
	By extending $p$ further if necessary using Lemma \ref{domain dense}, we may assume that 
	$[\delta^{-},\delta) \subseteq \dom(s_p)$. 
	Fix a regular cardinal $\chi > \omega_3$ such that $\dot F \in H(\chi)$. 
	As $\Ycal^+$ is a stationary subset of $[H(\omega_3)]^{\omega_1}$, we can fix 
	$N^* \prec H(\chi)$ such that 
	$\langle \p_\beta : \beta \le \delta \rangle$, $\dot F$, and $p$ are members of $N^*$ 
	and $N = N^* \cap H(\omega_3) \in \Ycal^+$.

	Since $\delta < \omega_3$, $p \in N^* \cap H(\omega_3) = N$. 
	So $p + (\delta,N)$ is defined, is a member of $\p_\delta$, and extends $p$. 
	By Theorem \ref{generic condition}, 
	$p + (\delta,N)$ is $(N^*,\p_\delta)$-generic. 
	We claim that $p + (\delta,N)$ forces that $\dot F$ is bounded below $\omega_2$.  
	Consider a generic filter $G$ on $\p_\delta$ which contains $p + (\delta,N)$. 
	Let $F = \dot F^G$, which is a member of $N^*[G]$. 
	By a standard fact, $N^*[G]$ is an elementary substructure of 
	$H(\chi)^{V[G]}$ and hence is closed under $F$. 
	Since $\omega_1 \subseteq N^*[G]$, $F[\omega_1] \subseteq N^*[G] \cap \omega_2$. 
	As $p + (\delta,N)$ is in $G$ and is $(N^*,G)$-generic, it follows that 
	$N^*[G] \cap \omega_2 = N \cap \omega_2 < \omega_2$, 
	so $F$ is bounded by $N \cap \omega_2$.
\end{proof}

\begin{cor}
	$\Delta = \omega_3$ and $\p_{\omega_3}$ preserves all cardinals.
\end{cor}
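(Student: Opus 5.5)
The statement to prove is the Corollary: $\Delta = \omega_3$ and $\p_{\omega_3}$ preserves all cardinals. The plan is to assemble results already established, so the argument is short. I will handle three ranges of cardinals separately: $\omega_1$, $\omega_2$, and cardinals $\ge \omega_3$.

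\textbf{Step 1: $\Delta = \omega_3$.} Suppose instead that $\Delta = \delta < \omega_3$. By the definition of $\Delta$, $\p_\delta$ is defined but $\p_{\delta+1}$ is not. The recursion stops at a successor stage only if one of two inductive hypotheses fails at $\delta$: either $\p_0$ is not a regular suborder of $\p_\delta$, or $\p_\delta$ fails to preserve some cardinal. The first alternative is ruled out by Lemma \ref{regular suborder}. For the second, I will check the three ranges.
\begin{itemize}
\item $\omega_1$ is preserved by Lemma \ref{iteration is omega_1 closed}.
\item $\omega_2$ is preserved by Proposition \ref{preserves omega_2}.
\item Cardinals $\ge \omega_3$ are preserved because $\p_\delta$ is $\omega_3$-Knaster, by Proposition \ref{chain condition}, and hence $\omega_3$-c.c.
\end{itemize}
Since $\omega_1,\omega_2$ are the only cardinals below $\omega_3$ other than finite ones and $\omega$, $\p_\delta$ preserves all cardinals. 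So neither failure can occur, contradicting the choice of $\delta$. Hence $\Delta = \omega_3$.

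\textbf{Step 2: $\p_{\omega_3}$ preserves all cardinals.} The same three facts apply with $\delta = \Delta = \omega_3$.
\begin{itemize}
\item $\p_{\omega_3}$ is $\omega_1$-closed by Lemma \ref{iteration is omega_1 closed}.
\item It preserves $\omega_2$ by Proposition \ref{preserves omega_2}. That proposition covers $\delta = \omega_3$ through its appeal to Lemma \ref{preservation below is enough}, whose hypothesis has already been verified for every $\delta < \omega_3$.
\item It is $\omega_3$-c.c.\ by Proposition \ref{chain condition}.
\end{itemize}

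\textbf{Main subtlety.} The only point needing care is circularity: Proposition \ref{preserves omega_2} is stated for $\delta \le \Delta$. I will resolve this by an induction on $\delta$. At each $\delta < \omega_3$ for which $\p_\delta$ is defined, the facts above show the inductive hypotheses hold, so $\p_{\delta+1}$ is defined. At limit stages $\p_\delta$ is always defined. Hence the recursion never terminates before $\omega_3$.
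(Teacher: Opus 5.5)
Your proposal is correct and matches the paper's proof, which just combines Lemma \ref{iteration is omega_1 closed}, Proposition \ref{chain condition}, Proposition \ref{preserves omega_2}, and Lemma \ref{preservation below is enough}. Your Step 1 spells out the ``clearly implies $\Delta = \omega_3$'' in that last lemma, including the regular-suborder hypothesis of the successor step, which Lemma \ref{regular suborder} discharges.
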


\begin{proof}
	By Lemmas \ref{iteration is omega_1 closed} and 
	\ref{preservation below is enough} together with Propositions 
	\ref{chain condition} and \ref{preserves omega_2}.
\end{proof}

\begin{prop} \label{I is stationary}
	For all $\tau < \omega_3$, $\p_{\omega_3}$ forces that 
	the set $\dot{\Ical}_\tau = 
	\bigcup \{ I_{p,\tau} : p \in \dot G_{\omega_3}, \ \tau \in \dom(s_p) \}$ 
	is stationary in $\omega_2$, where $\dot G_{\omega_3}$ is the $\p_{\omega_3}$-name 
	for the generic filter.
\end{prop}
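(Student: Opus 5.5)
We argue by density in $\p_{\omega_3}$: given a $\p_{\omega_3}$-name $\dot C$ for a club subset of $\omega_2$ and a condition $p$, we produce $r \le p$ and an ordinal $\beta$ with $r \Vdash \beta \in \dot C \cap \dot\Ical_\tau$. Since $\p_{\omega_3}$ is $\omega_3$-Knaster (Proposition \ref{chain condition}), the antichains deciding membership in $\dot C$ have size at most $\omega_2$. Hence, as in the proof of Lemma \ref{preservation below is enough}, $\dot C$ is a $\p_\delta$-name for some $\delta < \omega_3$. Enlarging $\delta$, we may assume $\tau < \delta$ and $p \in \p_\delta$. Since $\p_\delta$ is a regular suborder of $\p_{\omega_3}$ (Lemma \ref{regular suborder}), and $I_{r,\tau}$ for $r \in \p_\delta$ is the same object in both posets, it suffices to work inside $\p_\delta$.

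By Lemma \ref{domain dense} and Lemma \ref{iteration is omega_1 closed}, first extend $p$ so that $\tau \in \dom(s_p)$ and $[\delta^{-},\delta) \subseteq \dom(s_p)$. Fix a regular $\chi > \omega_3$. By stationarity of $\Ycal^+$, choose $N^* \prec H(\chi)$ containing $\langle \p_\beta : \beta \le \delta \rangle$, $\dot C$, $p$ and $\tau$, with $N = N^* \cap H(\omega_3) \in \Ycal^+$. Since $\delta < \omega_3$, we have $p \in N$ and $\tau \in N$. Let $r = p + (\delta,N)$. Because $\tau \in N \cap \dom(s_p)$, the definition of $p + (\delta,N)$ gives $N \cap \omega_2 \in I_{r,\tau}$. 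Hence $r \Vdash N \cap \omega_2 \in \dot\Ical_\tau$. Also, $N \cap \omega_2$ has cofinality $\omega_1$ because $N^\omega \subseteq N$ and $|N| = \omega_1$, so it lies in $S^2_1$.

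It remains to show $r \Vdash N \cap \omega_2 \in \dot C$. By Theorem \ref{generic condition}, $r$ is $(N^*,\p_\delta)$-generic. So if $G$ is generic with $r \in G$, then $N^*[G] \prec H(\chi)^{V[G]}$ and $N^*[G] \cap \omega_2 = N \cap \omega_2$. Since $C = \dot C^G \in N^*[G]$ is unbounded, elementarity gives that $C \cap N^*[G] \cap \omega_2$ is cofinal in $N \cap \omega_2$. As $C$ is closed, $N \cap \omega_2 \in C$.

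The only point needing care is the reduction of $\dot C$ to a $\p_\delta$-name, together with checking that stationarity and clubness are computed correctly between $V^{\p_\delta}$ and $V^{\p_{\omega_3}}$. Here $\omega_2$ is preserved by Proposition \ref{preserves omega_2}, and the club in the final model is decided at stage $\delta$. All the real work sits inside Theorem \ref{generic condition}, which we are assuming.
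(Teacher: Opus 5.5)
Your proposal is correct and follows essentially the same route as the paper's proof. You reduce $\dot C$ to a $\p_\delta$-name, extend $p$ so that $\tau \in \dom(s_p)$ and $[\delta^{-},\delta) \subseteq \dom(s_p)$, pick $N^*$ with $N \in \Ycal^+$, and use the $(N^*,\p_\delta)$-genericity of $p + (\delta,N)$ (Theorem \ref{generic condition}) to get $N \cap \omega_2 \in \dot C \cap \dot\Ical_\tau$. Your explicit remarks on transferring between $\p_\delta$ and $\p_{\omega_3}$ fill in details the paper leaves implicit.
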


\begin{proof}
	Let $\dot C$ be a nice $\p_{\omega_3}$-name for a club subset of $\omega_2$. 
	Fix $\delta < \omega_3$ large enough so that $\dot C$ is a $\p_\delta$-name. 	
	We show that for all $p \in \p_{\delta}$, there exists $q \le_{\delta} p$ 
	which forces that $\dot{\Ical}_\tau \cap \dot C$ is non-empty. 
	Let $p \in \p_\delta$. 
	By extending $p$ further if necessary using Lemma \ref{domain dense}, we may assume that 
	$[{\delta}^{-},\delta) \subseteq \dom(s_p)$ and also $\tau \in \dom(s_p)$. 
	Fix a regular cardinal $\chi > \omega_3$ such that $\dot C \in H(\chi)$. 
	Since $\Ycal^+$ is a stationary subset of $[H(\omega_3)]^{\omega_1}$, we can fix 
	$N^* \prec H(\chi)$ such that  
	$\langle \p_\beta : \beta \le \delta \rangle$, $\dot C$, $\tau$, and $p$ are members of $N^*$  
	and $N = N^* \cap H(\omega_3) \in \Ycal^+$. 
	Let $q = p + (\delta,N)$. 
	Then $q \in \p_\delta$, $q \le_\delta p$, and by Theorem \ref{generic condition}, 
	$q$ is $(N^*,\p)$-generic. 
	We have that $N \cap \omega_2 \in I_{q,\tau}$, so 
	$q$ forces that $N \cap \omega_2 \in \dot{\Ical}_\tau$. 
	Let $G$ be a generic filter on $\p_\delta$ containing $q$ 
	and let $C = \dot C^G$. 
	Then $C \in N^*[G]$. 
	Since $N^*[G]$ is an elementary substructure of $H(\chi)^{V[G]}$ and 
	$q$ is $(N^*,\p_\delta)$-generic, 
	$N^* \cap \omega_2 = N \cap \omega_2$ is a limit point of $C$ and hence is 
	a member of $C$.
\end{proof}

\begin{cor}
	For all $\delta < \omega_3$, $\p_{\omega_3}$ forces that 
	$(B)_{\dot{\vec \pi},\dot f_\delta}$ holds.
\end{cor}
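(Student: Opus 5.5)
The corollary follows by combining Proposition \ref{forcing (B)} (applied with $\p_{\omega_3}$ in place of $\p_\delta$) with Proposition \ref{I is stationary} and the preservation results. Fix $\delta < \omega_3$. By the preceding corollary, $\Delta = \omega_3$, so $\p_{\omega_3}$ is defined, $\delta < \omega_3 = \Delta$, and the coordinate $\delta$ of the iteration was actually forced with $\dot \q_\delta = \B(\dot{\vec \pi},\dot f_\delta)$. Proposition \ref{forcing (B)} needs two hypotheses for the pair $\tau = \delta < \omega_3 = \Delta$. The first is that $\p_{\omega_3}$ preserves $\omega_2$; this is Proposition \ref{preserves omega_2} with $\delta = \Delta = \omega_3$, or equivalently the preceding corollary. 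The second is that $\p_{\omega_3}$ forces $\bigcup \{ I_{p,\delta} : p \in \dot G_{\omega_3}, \ \delta \in \dom(s_p) \}$ to be stationary in $\omega_2$, which is exactly Proposition \ref{I is stationary} with $\tau = \delta$.

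With both hypotheses verified, Proposition \ref{forcing (B)} yields that $\p_{\omega_3}$ forces $(B)_{\dot{\vec \pi},\dot f_\delta}$. Its proof passes to $G_{\delta+1} = G \cap \p_{\delta+1}$, which is generic over $\p_{\delta+1}$ by Lemma \ref{regular suborder}, and uses the dense embedding of Lemma \ref{two step equivalence} to read off a $V[G_\delta]$-generic filter on $\B(\vec \pi, f_\delta)$. It then applies Lemma \ref{K forces (B)} in $V[G]$. The witnesses $h = \bigcup_p h_{p,\delta}$ and $\Ical_\delta$ remain witnesses in the full extension $V[G]$ for two reasons. First, the clauses defining $(B)_{\vec\pi,f_\delta}$ for each $\beta \in \Ical_\delta$ are absolute, since they concern countable sets and explicit inequalities already determined in $V[G_{\delta+1}]$. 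Second, stationarity of $\Ical_\delta$ is asserted in $V[G]$ itself, where $\omega_2$ is preserved.

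The only point needing care is the bookkeeping subtlety in Proposition \ref{forcing (B)}: Lemma \ref{K forces (B)} assumes that $\B(\vec\pi,f)$ preserves $\omega_2$ and forces stationarity over its own ground model, whereas here these facts are known only in the final model $V[G]$. However, the proof of Lemma \ref{K forces (B)} uses these assumptions only to conclude that $I$ is stationary and that $h$ is a total injection on $\omega_2$. Totality follows from Lemma \ref{dense for h}, and stationarity of $I$ holds in $V[G]$ by Proposition \ref{I is stationary}. So the argument transfers directly, and the corollary follows with no genuine obstacle beyond citing these results.
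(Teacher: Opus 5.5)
Your proposal is correct and matches the paper's proof. The paper also simply combines Proposition~\ref{forcing (B)} (with $\tau = \delta$ and $\Delta = \omega_3$ as the iteration length) with Proposition~\ref{I is stationary}, using the preceding corollary for $\Delta = \omega_3$ and the preservation of $\omega_2$; your extra unpacking of how Lemma~\ref{K forces (B)} transfers to the final model is consistent with the paper but not needed, since Proposition~\ref{forcing (B)} already establishes it.
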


\begin{proof}
	By Propositions \ref{forcing (B)} and \ref{I is stationary}.
\end{proof}

\begin{proof}[Proof of Theorem \ref{generic condition}]
	Let $\theta = N \cap \omega_2$. 
	Then $\theta \in S^2_1$. 
	Fix a dense open subset $D$ of $\p_\delta$ which is a member of $N^*$, 
	and we prove that $N^* \cap D$ is predense below $p + (\delta,N)$. 
	Let $r \le p + (\delta,N)$ be given, 
	and we find a member of $N^* \cap D$ which is compatible with $r$. 
	By extending $r$ further if necessary using Lemmas 
	\ref{E_delta is dense} and \ref{E_delta is dense 2}, 
	we may assume without loss of generality 
	that $r \in D \cap E_\delta$ and 
	for all $(\xi,M) \in A_r$, if $M \cap \omega_2 < N \cap \omega_2$ then 
	$(\sup(M \cap N \cap \xi),M \cap N) \in A_r$. 
	Define $Z = \{ \gamma : \exists \beta \in I_{r} \ (\beta,\gamma) \in \dom(c_r) \}$.

	Fix $\theta_0 < \theta$ and $\zeta < \omega_1$ 
	both large enough so that:
	\begin{enumerate}
		\item[(1)] $I_{r} \cap \theta \subseteq \theta_0$;
		\item[(2)] $Z \cap \theta \subseteq \theta_0$;
		\item[(3)] for all $\tau \in \dom(s_r)$, 
		$\dom(h_{r,\tau}) \cap \theta$ and $\ran(h_{r,\tau}) \cap \theta$ 
		are subsets of $\theta_0$;
		\item[(4)] $\ran(c_r) \subseteq \zeta$.
	\end{enumerate}
	Since $r \le p + (\delta,N)$, by Lemma \ref{point of p + (delta,N)} it follows that 
	for all $\tau \in N \cap \dom(s_r)$, $\theta \in I_{r,\tau}$, and hence 
	$\theta$ is closed under both $h_{r,\tau}$ and $h_{r,\tau}^{-1}$.

	Define:
	\begin{enumerate}
		\item[(5)] $c_N = c_r \res N$;
		\item[(6)] $s_N = \dom(s_r) \cap N$;
		\item[(7)] $A_N = A_r \cap N$;
		\item[(8)] $I_N = I_{r} \cap N$;
		\item[(9)] $Z_N = Z \cap N$;
		\item[(10)] for all $\tau \in s_N$, $h_{N,\tau} = h_{r,\tau} \res N$ and 
		$I_{N,\tau} = I_{r,\tau} \cap N$.
	\end{enumerate}

	Applying the elementarity of $N^*$, fix $\bar r$ and 
	$\bar \theta$ which are members of $N^*$ and satisfy:
	\begin{enumerate}
		\item[(11)] $\bar r \in D \cap E_\delta$;
		\item[(12)] $s_N \subseteq \dom(s_{\bar r})$ and 
		$A_N \subseteq A_{\bar r}$;
		\item[(13)] for all $\gamma \in Z_N$, there exists $\beta \in I_{\bar r}$ 
		such that $(\beta,\gamma) \in \dom(c_{\bar r})$;
		\item[(14)] $\bar \theta \in S^2_1$, $\theta_0 < \bar \theta$, 
		and for all $\tau \in s_N$, 
		$\bar \theta \in I_{\bar r,\tau}$;
		\item[(15)] $c_{\bar r} \res \bar{\theta}^2 = c_N$;
		\item[(16)] for all $\tau \in s_N$:
		\begin{enumerate}
			\item $\bar \theta$ is closed under 
			$h_{\bar r,\tau}$ and $h_{\bar r,\tau}^{-1}$;
			\item $h_{\bar r,\tau} \res \bar{\theta} = h_{N,\tau}$;
			\item $I_{\bar r,\tau} \cap \bar{\theta} = I_{N,\tau}$.
		\end{enumerate}
	\end{enumerate}
	Namely, the properties described in (11)--(16) can be expressed by a first-order 
	formula with parameters in $N^*$ (using the fact that $N^\omega \subseteq N$) 
	which is satisfied by $r$ and $\theta$ in $H(\chi)$. 
	So by elementarity, these properties are satisfied by some 
	$\bar r$ and $\bar \theta$ in $N^*$. 
	Note that $\bar r$ and $\bar \theta$ are in $H(\omega_3)$ and hence are in $N$.

	Applying Lemma \ref{final dense set}, 
	fix in $N^*$ a condition $v \le_\delta \bar r$, countable limit ordinals 
	$\zeta_0$ and $\zeta_1$, and a sequence 
	$\langle F_\tau : \tau \in s_N \rangle$ satisfying:
	\begin{enumerate}
		\item[(17)] $\zeta \le \zeta_0 < \zeta_0 + \omega \le \zeta_1$;
		\item[(18)] for all $\tau \in s_N$:
		\begin{enumerate}
			\item $F_\tau : \zeta_1 \to \zeta_1$;
			\item $v \res \tau \Vdash_\tau \dot f_\tau \res \zeta_1 = \check F_\tau$;
			\item $F_\tau[\zeta] \subseteq \zeta_0$;
		\end{enumerate}
		\item[(19)] for all $j < \zeta_1$, the set 
		$\{ F_\tau(j) : \tau \in s_N \}$ is bounded below $\zeta_1$.
	\end{enumerate}

	Since $D$ is open, $v \in D \cap N^*$. 
	So we are done provided we can show that $v$ and $r$ are compatible. 

	\bigskip

	\textbf{Claim A:} Suppose that $\beta \in I_{r} \setminus \theta$, 
	$\tau \in s_N$, and $\alpha \in \dom(h_{v,\tau})$. 
	If $h_{v,\tau}(\alpha) \in \dom(c_{r,\beta})$, then 
	$\alpha \in \dom(c_{r,\beta})$.

	\emph{Proof:} 
	Assume that $h_{v,\tau}(\alpha) \in \dom(c_{r,\beta})$. 
	Then $h_{v,\tau}(\alpha) \in Z_N$. 
	So by (13), there exists $\bar \beta \in I_{\bar r}$ such that 
	$h_{v,\tau}(\alpha) \in \dom(c_{\bar r,\bar \beta})$. 
	Since $v \le_\delta \bar r$ and $h_{v,\tau}(\alpha) \in \ran(h_{v,\tau})$, 
	the fact that $\bar r \in E_\delta$ implies 
	by Definition \ref{definition of E_delta}(3) 
	that $h_{v,\tau}(\alpha) \in \ran(h_{\bar r,\tau})$. 
	Since $h_{\bar r,\tau} \subseteq h_{v,\tau}$ and $h_{v,\tau}$ is injective, 
	it follows that $\alpha \in \dom(h_{\bar r,\tau})$ and 
	$h_{\bar r,\tau}(\alpha) = h_{v,\tau}(\alpha)$. 
	Now $h_{v,\tau}(\alpha) \in Z_N \subseteq \theta_0 < \bar{\theta}$. 
	Since $\bar{\theta} \in I_{\bar r,\tau} \subseteq I_{v,\tau}$, 
	$\alpha < \bar{\theta}$ since $v$ being a condition implies that 
	$\bar{\theta}$ is closed under $h_{v,\tau}^{-1}$. 
	But $h_{\bar r,\tau} \res \bar{\theta} = h_{N,\tau} = h_{r,\tau} \res \bar{\theta}$. 
	So $\alpha \in \dom(h_{r,\tau})$ and $\alpha < \beta$. 
	Since $r \in E_\delta$, by Definition \ref{definition of E_delta}(2) 
	we have that $\alpha \in \dom(c_{r,\beta})$. 
	This completes the proof of Claim A.

	\bigskip

	\textbf{Definitions of $C$ and functions 
	$C_\beta : \{ \gamma_{\beta,n} : n < k_\beta \} \to \zeta_1$ 
	for each $\beta \in I_r \setminus \theta$.} 
	For each $\beta \in I_r \setminus \theta$, we define a function 
	$C_\beta$ with domain equal to the set 
	$$
	\left( \bigcup \{ \dom(h_{v,\tau}) \cup \ran(h_{v,\tau}) : 
	\tau \in s_N \right) 
	\setminus \dom(c_{r,\beta})
	$$
	and mapping into the interval $[\zeta_0,\zeta_1)$ as follows. 
	Injectively enumerate the domain of $C_\beta$ as 
	$\langle \gamma_{\beta,n} : n < k_\beta \rangle$, 
	where $k_\beta \le \omega$. 
	We define $C_\beta(\gamma_{\beta,n})$ by induction on $n < k_\beta$. 
	Suppose that $n < k_\beta$ and for all $0 \le m < n$, $C_\beta(\gamma_{\beta,m})$ is defined. 
	Define $C_\beta(\gamma_{\beta,n})$ to be any ordinal in the interval 
	$[\zeta_0,\zeta_1)$ which is greater than any member of the set 
	$$
	\{ C_\beta(\gamma_{\beta,m}) : m < n \} \cup 
	\{ F_\tau(C_\beta(\gamma_{\beta,m})) : m < n, \ \tau \in s_N \}.
	$$
	This is possible by statement (19) above. 
	This completes the definition of $C_\beta$. 
	Now define $C$ with domain equal to the set of all $(\beta,\gamma)$ with 
	$\beta \in I_r \setminus \theta$ and $\gamma \in \dom(C_\beta)$ so that 
	for all such $(\beta,\gamma)$, 
	$C(\beta,\gamma) = C_\beta(\gamma)$.

	\bigskip

	\textbf{Claim B:} For all $\beta \in I_r \setminus \theta$, 
	for all $\gamma \in \dom(C_\beta)$, 
	and for all $\tau \in s_N$, if $\gamma \in \dom(h_{v,\tau})$ then 
	$h_{v,\tau}(\gamma) \in \dom(C_\beta)$.
	
	\emph{Proof:} 
	Immediate by Claim A.
		
	\bigskip
	 
	\textbf{Claim C:} For all $\beta \in I_r \setminus \theta$ and for all 
	$\tau \in s_N$:
	\begin{enumerate}
		\item if $\gamma \in \dom(h_{v,\tau}) \cap \dom(C_\beta)$, then 
		$$
		F_\tau(\min \{ C_\beta(\gamma), C_\beta(h_{v,\tau}(\gamma)) \}) < 
		\max \{ C_\beta(\gamma), C_\beta(h_{v,\tau}(\gamma) \};
		$$ 
		\item if $\alpha, \gamma$ are distinct elements of 
		$\dom(C_\beta) \cap \dom(h_{v,\tau})$, then 
		$$
		F_\tau(\min \{ C_\beta(h_{v,\tau}(\alpha)), 
		C_\beta(h_{v,\tau}(\gamma)) \}) < 
		\max \{ C_\beta(h_{v,\tau}(\alpha)), C_\beta(h_{v,\tau}(\gamma)) \}.
		$$
	\end{enumerate}

	\emph{Proof:} (1) Fix $n < k_\beta$ such that $\gamma = \gamma_{\beta,n}$. 
	By Claim B, $h_{v,\tau}(\gamma_{\beta,n}) \in \dom(C_\beta)$, 
	so we can fix $m < k_\beta$ different from $n$ such that 
	$h_{v,\tau}(\gamma_{\beta,n}) = \gamma_{\beta,m}$. 
	Case 1: $n < m$. 
	By the definition of $C_\beta$, 
	$C_\beta(\gamma_{\beta,n})$ and $F_\tau(C_\beta(\gamma_{\beta,n}))$ are both 
	less than $C_\beta(\gamma_{\beta,m}) = C_\beta(h_{v,\tau}(\gamma_{\beta,n}))$, 
	so (1) holds. 
	Case 2: $m < n$. 
	By the definition of $C_\beta$, 
	$C_\beta(\gamma_{\beta,m})$ and $F_\tau(C_\beta(\gamma_{\beta,m}))$ are both 
	less than $C_\beta(\gamma_{\beta,n})$.
	But this is the same as $C_\beta(h_{v,\tau}(\gamma_{\beta,n}))$ and 
	$F_\tau(C_\beta(h_{v,\tau}(\gamma_{\beta,n})))$ both being less 
	than $C_\beta(\gamma_{\beta,n})$, so (1) holds.
	 
	(2) Fix distinct $m, n < k_\beta$ such that 
	$\alpha = \gamma_{\beta,m}$ and $\gamma = \gamma_{\beta,n}$. 
	Since $h_{v,\tau}$ is injective, by Claim B we can fix 
	distinct $g, l < k_\beta$ such that 
	$h_{v,\tau}(\gamma_{\beta,m}) = \gamma_{\beta,g}$ and 
	$h_{v,\tau}(\gamma_{\beta,n}) = \gamma_{\beta,l}$. 
	By symmetry, we may assume that $g < l$. 
	Then by the definition of $C_\beta$, both 
	$C_\beta(\gamma_{\beta,g})$ and $F_\tau(C_\beta(\gamma_{\beta,g}))$ are 
	less than $C_\beta(\gamma_{\beta,l})$. 
	So $C_\beta(h_{v,\tau}(\gamma_{\beta,m}))$ and 
	$F_\tau(C_\beta(h_{v,\tau}(\gamma_{\beta,m})))$ are less than 
	$C_\beta(h_{v,\tau}(\gamma_{\beta,n}))$ and(2) holds.

	\bigskip

	We are now ready to define a condition $w$ which extends $v$ and $r$.

	\bigskip

	\textbf{Definition of $w$:} 
	Define $w = (c_w,s_w,A_w)$ as follows. 
	Let $c_w = c_v \cup c_r \cup C$ and $A_w = A_v \cup A_r$. 
	Let the domain of $s_w$ be equal to $\dom(s_v) \cup \dom(s_r)$. 
	For each $\tau \in \dom(s_w)$, define $s_w(\tau)$ as follows.
	\begin{itemize}
		\item[(I)] If $\tau \in \dom(s_v) \setminus \dom(s_r)$, then 
		let 
			$$
			s_w(\tau) = 
			(h_{v,\tau},I_{v,\tau} \cup \{ K \cap \omega_2 : \exists \beta \ 
			(\beta,K) \in A_r, \ \theta \le K \cap \omega_2, 
			\ \tau \in K \cap \beta \}).
			$$
		\item[(II)] If $\tau \in \dom(s_r) \setminus \dom(s_v)$, 
		let $s_w(\tau) = s_r(\tau)$.
		\item[(III)] If $\tau \in \dom(s_v) \cap \dom(s_r)$, let 
		$s_w(\tau) = (h_{v,\tau} \cup h_{r,\tau},I_{v,\tau} \cup I_{r,\tau})$.
	\end{itemize}

	\bigskip
	
	We split up the proof that $w$ is a condition which extends $v$ and $r$ into 
	a series of claims.
	
	\bigskip
	
	\textbf{Claim D:} $c_w \in \C$ and $c_v$ and $c_r$ are subsets of $c_w$.

	\emph{Proof:}  
	We start by showing that $c_w$ is a function, that is, whenever 
	$(\beta,\gamma)$ is in the domain of at least two of 
	$c_v$, $c_r$, or $C$, then these functions have the same value at $(\beta,\gamma)$. 
	The domain of $C$ is disjoint from the domains of $c_v$ and $c_r$. 
	Namely, if $(\beta,\gamma) \in \dom(C)$, then $\beta \notin N$ so $(\beta,\gamma)$ 
	is not in the domain of $c_v$, and also by the definition of the domain of $C_\beta$, 
	$\gamma \notin \dom(c_{r,\beta})$ so $(\beta,\gamma) \notin \dom(c_r)$. 
	We are left with the case that $(\beta,\gamma) \in \dom(c_v) \cap \dom(c_r)$. 
	But then $(\beta,\gamma) \in \dom(c_v) \cap N \subseteq \dom(c_N)$, 
	so $c_r(\beta,\gamma) = c_N(\beta,\gamma) = c_{\bar r}(\beta,\gamma) = c_v(\beta,\gamma)$.

	Now we show that for any distinct $(\beta,\alpha)$ and $(\beta,\gamma)$ 
	in the domain of $c_w$, $c_w(\beta,\alpha) \ne c_w(\beta,\gamma)$. 
	This is clear if $(\beta,\alpha)$ and $(\beta,\gamma)$ are both in the domain 
	of any of $c_v$, $c_r$, or $C$. 
	Case 1: $\beta \notin N$. 
	Then neither $(\beta,\alpha)$ nor $(\beta,\gamma)$ are in $\dom(c_v)$. 
	So without loss of generality we may assume that 
	$(\beta,\alpha) \in \dom(c_r)$ and $(\beta,\gamma) \in \dom(C)$. 
	Then $c_w(\beta,\alpha) = c_r(\beta,\alpha) < \zeta \le \zeta_0 \le 
	C(\beta,\gamma) = c_w(\beta,\gamma)$. 
	Case 2: $\beta \in N$. 
	Then neither $(\beta,\alpha)$ nor $(\beta,\gamma)$ are in $\dom(C)$. 
	Without loss of generality we may assume that 
	$(\beta,\alpha) \in \dom(c_r)$ and $(\beta,\gamma) \in \dom(c_v)$. 
	So $\beta \in I_r \cap N \subseteq \theta$. 
	Hence, $\alpha < \theta$ so $(\beta,\alpha) \in N$. 
	Therefore, $(\beta,\alpha) \in \dom(c_N) \subseteq \dom(c_{v})$ and we are done. 
	Finally, $c_v$ and $c_r$ are subsets of $c_w$ by definition. 
	This completes the proof of Claim D.
	
	\bigskip
	
	The next two claims are simple to verify.

	\bigskip
	
	\textbf{Claim E:} $s_w$ is a function whose domain is a countable subset of $\delta$, 
	and $\dom(s_v)$ and $\dom(s_r)$ are subsets of $\dom(s_w)$.

	\bigskip

	\textbf{Claim F:} $A_w$ is a countable set of pairs of the form 
	$(\xi,M)$, where $\xi \le \delta$, $\xi < \omega_3$, $\xi$ is a limit ordinal, 
	and $M \in \Ycal$. 
	Moreover, $A_v$ and $A_r$ are subsets of $A_w$.

	\bigskip
	
	\textbf{Claim G:} For all $\tau \in \dom(s_w)$, $s_w(\tau)$ is an ordered pair 
	$(h_{w,\tau},I_{w,\tau})$ satisfying: 
	\begin{enumerate}
		\item[(i)] $h_{w,\tau}$ is an injective function whose domain is a countable 
		subset of $\omega_2$, which maps into $\omega_2$, 
		and satisfies that for all $\alpha \in \dom(h_{w,\tau})$, 
		$h_{w,\tau}(\alpha) \ne \alpha$;
		\item[(ii)] $I_{w,\tau}$ is a countable subset of $S^2_1$;
		\item[(iii)] for all $\alpha \in \dom(h_{w,\tau})$ and for all $\beta \in I_{w,\tau}$, 
		if $\alpha < \beta$ then $h_{w,\tau}(\alpha) < \beta$;
		\item[(iv)] for all $\gamma \in \ran(h_{w,\tau})$ and for all $\beta \in I_{w,\tau}$, 
		if $\gamma < \beta$ then $h_{w,\tau}^{-1}(\gamma) < \beta$.
	\end{enumerate}
	
	\emph{Proof:} 
	(i) The fact that $h_{w,\tau}$ satisfies (i) is immediate in cases 
	(I) and (II) of the definition of $s_w(\tau)$. 
	For case (III), suppose that 
	$\tau \in \dom(s_v) \cap \dom(s_r)$ and so 
	$h_w(\tau) = h_{v,\tau} \cup h_{r,\tau}$. 
	Recall that $h_{r,\tau} \res \theta = h_{N,\tau} \subseteq h_{v,\tau}$, 
	so $h_{w,\tau} = h_{v,\tau} \cup (h_{r,\tau} \res [\theta,\omega_2))$. 
	Therefore, $h_{w,\tau}$ is indeed a function. 
	Now the range of $h_{v,\tau}$ is a subset of $\theta$, whereas for all 
	$\alpha \in \dom(h_{r,\tau}) \setminus \theta$, 
	$h_{r,\tau}(\alpha) \ge \theta$. 
	Since $h_{v,\tau}$ and $h_{r,\tau}$ are injective, so is $h_{w,\tau}$. 
	Finally, $h_{w,\tau}(\alpha) \ne \alpha$ for all $\alpha \in \dom(h_{w,\tau})$ 
	is obvious.

	(ii) is immediate. 

	(iii) Suppose that $\alpha \in \dom(h_{w,\tau})$, $\beta \in I_{w,\tau}$, 
	and $\alpha < \beta$. 
	We show that $h_{w,\tau}(\alpha) < \beta$. 
	This is immediate in cases (I) and (II) of the definition of $s_w$. 
	So assume that $\tau \in \dom(s_v) \cap \dom(s_r)$. 
	We are done if either $\alpha \in \dom(h_{v,\tau})$ and $\beta \in I_{v,\tau}$, 
	or if $\alpha \in \dom(h_{r,\tau})$ and $\beta \in I_{r,\tau}$. 
	Case 1: $\alpha \in \dom(h_{v,\tau})$ and 
	$\beta \in I_{r,\tau}$. 
	Then $h_{w,\tau}(\alpha) = h_{v,\tau}(\alpha) < \theta$. 
	If $\beta \ge \theta$ we are done, so assume that $\beta < \theta$. 
	Then $\beta \in I_{r,\tau} \cap \theta = I_{N,\tau} \subseteq I_{v,\tau}$. 
	Case 2: $\alpha \in \dom(h_{r,\tau})$ and $\beta \in I_{v,\tau}$. 
	Then $\alpha < \beta < \theta$. 
	So $\alpha \in \dom(h_{r,\tau}) \cap N = \dom(h_{N,\tau}) \subseteq \dom(h_{v,\tau})$.

	(iv) Suppose that $\gamma \in \ran(h_{w,\tau})$, $\beta \in I_{w,\tau}$, 
	and $\gamma < \beta$. 
	We show that $h_{w,\tau}^{-1}(\gamma) < \beta$. 
	This is immediate in cases (I) and (II) of the definition of $s_w$. 
	So assume that $\tau \in \dom(s_v) \cap \dom(s_r)$. 
	We are done if either $\gamma \in \ran(h_{v,\tau})$ and $\beta \in I_{v,\tau}$, 
	or if $\gamma \in \ran(h_{r,\tau})$ and $\beta \in I_{r,\tau}$. 
	Case 1: $\gamma \in \ran(h_{v,\tau})$ and 
	$\beta \in I_{r,\tau}$. 
	If $\beta \ge \theta$ then we are done since $h_{v,\tau}^{-1}(\gamma) < \theta$. 
	Suppose that $\beta < \theta$. 
	Then $\beta \in I_{r,\tau} \cap N = I_{N,\tau} \subseteq I_{v,\tau}$. 
	Case 2: $\gamma \in \ran(h_{r,\tau})$ and $\beta \in I_{v,\tau}$. 
	Then $\gamma < \beta < \theta$. 
	Let $\alpha = h_{r,\tau}^{-1}(\gamma)$. 
	Then $\alpha < \theta$ since $\theta \in I_{r,\tau}$. 
	So $\alpha \in \dom(h_{r,\tau} \res \theta) = 
	\dom(h_{N,\tau}) \subseteq \dom(h_{v,\tau})$ and 
	$h_{v,\tau}(\alpha) = h_{r,\tau}(\alpha) = \gamma$. 
	So $\gamma \in \ran(h_{v,\tau})$.
	This completes the proof of Claim G.
	
	\bigskip
	
	\textbf{Claim H:} For all $(\xi,M) \in A_w$ and for all 
	$\tau \in M \cap \dom(s_w) \cap \xi$, 
	$M \cap \omega_2 \in I_{w,\tau}$.

	\emph{Proof:} If $(\xi,M) \in A_v$ and $\tau \in \dom(s_v)$, or if 
	$(\xi,M) \in A_r$ and $\tau \in \dom(s_r)$, 
	then we are done since $v$ and $r$ are conditions. 
	Case 1: $(\xi,M) \in A_v$ and $\tau \in \dom(s_r) \setminus \dom(s_v)$. 
	Then $M \in N$, so $M \subseteq N$. 
	Therefore, $\tau \in \dom(s_r) \cap N = s_N \subseteq \dom(s_v)$, 
	which is a contradiction. 
	Case 2: $(\xi,M) \in A_r$ and $\tau \in \dom(s_v) \setminus \dom(s_r)$. 
	So we are in case (I) in the definition of $s_w(\tau)$. 
	If $M \cap \omega_2 \ge \theta$, then $M \cap \omega_2 \in I_{w,\tau}$ 
	by the definition of $s_w(\tau)$. 
	Suppose that $M \cap \omega_2 < \theta$. 
	Then by the choice of $r$, $(\sup(M \cap N \cap \xi),M \cap N) \in A_r$. 
	Now $M \cap N \in N$ and $M \cap N \cap \xi$ is an initial segment of 
	$M \cap N \cap \delta$. 
	Since $M \cap N \cap \delta$ is in $N$ and has $\omega_1$-many initial segments, 
	$M \cap N \cap \xi$ is in $N$ and so $\sup(M \cap N \cap \xi) \in N$. 
	Hence, $(\sup(M \cap N \cap \xi),M \cap N) \in A_r \cap N = A_N \subseteq A_v$. 
	As $\tau \in (M \cap N) \cap \dom(s_v) \cap \sup(M \cap N \cap \xi)$, it follows 
	that $M \cap N \cap \omega_2 = M \cap \omega_2$ is in $I_{v,\tau} \subseteq I_{w,\tau}$.
	This completes the proof of Claim H.
	
	\bigskip
	
	Putting Claims (D)--(H) together, by Lemma \ref{non-inductive characterization} 
	we have that $w \in \p_\delta$.
	
	\bigskip
	
	\textbf{Claim I:} For all $\tau \in \dom(s_v)$, 
	$w \res \tau \Vdash_\tau s_w(\tau) \le_{\dot \q_\tau} s_v(\tau)$.
	
	\emph{Proof:} 
	Case 1: $\tau \in \dom(s_v) \setminus \dom(s_r)$. 
	Then by definition, $h_{w,\tau} = h_{v,\tau}$ and $I_{v,\tau} \subseteq I_{w,\tau}$. 
	So Definition \ref{definition of the main forcing}(a, b) are immediate, 
	and (c) is vacuous because 
	$\dom(h_{w,\tau}) \setminus \dom(h_{v,\tau})$ is empty. 
	Case 2: $\tau \in \dom(s_v) \cap \dom(s_r)$. 
	Then by definition, $h_{v,\tau} \subseteq h_{w,\tau}$ and 
	$I_{v,\tau} \subseteq I_{w,\tau}$. 
	So Definition \ref{definition of the main forcing}(a, b) are immediate, 
	and (c) is trivial because 
	if $\alpha \in \dom(h_{w,\tau}) \setminus \dom(h_{v,\tau})$ 
	and $\beta \in I_{v,\tau}$, 
	then $\beta < \theta \le \alpha$ since 
	$\dom(h_{r,\tau}) \cap \theta = \dom(h_{N,\tau}) \subseteq \dom(h_{v,\tau})$.
	This completes the proof of Claim J.
	
	\bigskip
	
	Claims (D), (E), (F), and (I) establish that $w \le_\delta v$.
	
	\bigskip

	\textbf{Claim J:} For all $\tau \in \dom(s_r)$, 
	$w \res \tau \Vdash_\tau s_w(\tau) \le_{\dot \q_\tau} s_r(\tau)$.

	\emph{Proof:} If $\tau \in \dom(s_r) \setminus \dom(s_v)$, 
	then $s_w(\tau) = s_r(\tau)$ so the claim is immediate. 
	So we may assume that $\tau \in \dom(s_r) \cap \dom(s_v)$, which implies 
	that $\tau \in \dom(s_r) \cap N = s_N$ and 
	we are in case (III) of the definition of $s_w(\tau)$. 
	By definition, $h_{r,\tau} \subseteq h_{w,\tau}$ and $I_{r,\tau} \subseteq I_{w,\tau}$, 
	so (a) and (b) of Definition \ref{definition of the main forcing} are immediate. 

	For (c), suppose that $\alpha \in \dom(h_{w,\tau}) \setminus \dom(h_{r,\tau})$, 
	$\beta \in I_{r,\tau}$, and $\alpha < \beta$. 
	We need to show that $w \res \tau$ forces (i) and (ii) of 
	Definition \ref{definition of the main forcing}(c). 
	By case (III) in the definition of $s_w(\tau)$, 
	we have that 
	$s_w(\tau) = (h_{v,\tau} \cup h_{r,\tau},I_{v,\tau} \cup I_{r,\tau})$ 
	and so $\alpha \in \dom(h_{v,\tau})$.

	Case 1: $\beta < \theta$. 
	Then $\beta \in I_{r,\tau} \cap \theta = I_{N,\tau} \subseteq I_{\bar r,\tau}$ 
	and $\alpha < \beta < \theta_0$. 
	If $\alpha \in \dom(h_{\bar r,\tau})$, then since 
	$h_{\bar r,\tau} \res \theta_0 = h_{r,\tau} \res \theta_0$, 
	$\alpha \in \dom(h_{r,\tau})$, which is false. 
	So $\alpha \in \dom(h_{v,\tau}) \setminus \dom(h_{\bar r,\tau})$. 
	As $v \le_\delta \bar r$, we have that:
	$$
	v \res \tau \Vdash_\tau 
	\dot f_\tau(\min \{ \dot{\pi}_\beta(\alpha), \dot{\pi}_\beta(h_{v,\tau}(\alpha)) \}) < 
	\max \{ \dot{\pi}_\beta(\alpha), \dot{\pi}_\beta(h_{v,\tau}(\alpha)) \}.
	$$
	But $w \le_\delta v$, so $w \res \tau$ forces the same. 
	Also, $h_{v,\tau}(\alpha) = h_{w,\tau}(\alpha)$. 
	So 
	$$
	w \res \tau \Vdash_\tau 
	\dot f_\tau(\min \{ \dot{\pi}_\beta(\alpha), \dot{\pi}_\beta(h_{w,\tau}(\alpha)) \}) < 
	\max \{ \dot{\pi}_\beta(\alpha), \dot{\pi}_\beta(h_{w,\tau}(\alpha)) \},
	$$
	which proves (i). 
	The same argument shows that for all $\gamma \in \dom(h_{v,\tau}) \cap \beta$ 
	different from $\alpha$, 
	$$
	w \res \tau \Vdash_\tau 
	(\min(\dot{\pi}_\beta(h_{w,\tau}(\gamma)),\dot{\pi}_\beta(h_{w,\tau}(\alpha))) < 
	\max(\dot{\pi}_\beta(h_{w,\tau}(\gamma)),\dot{\pi}_\beta(h_{w,\tau}(\alpha))).
	$$
	Assume that $\gamma \in \dom(h_{w,\tau}) \cap \beta$. 
	Then either $\gamma \in \dom(h_{v,\tau})$, or else 
	$\gamma \in \dom(h_{r,\tau}) \cap \beta \subseteq \dom(h_{v,\tau})$. 
	So in either case, $\gamma \in \dom(h_{v,\tau}) \cap \beta$ and the proof of 
	(ii) is complete.
	
	Case 2: $\beta \ge \theta$. 
	If $\alpha \in \dom(c_{r,\beta})$, then since $r \in E_\delta$ it follows 
	by Definition \ref{definition of E_delta}(1) that 
	$\alpha \in \dom(h_{r,\tau})$, which is false. 
	So $\alpha \in \dom(h_{v,\tau}) \setminus \dom(c_{r,\beta})$. 
	Hence, $\alpha \in \dom(C_\beta)$. 
	So $c_w(\beta,\alpha) = C_\beta(\alpha)$ and 
	$c_w(\beta,h_{v,\tau}(\alpha)) = C_\beta(h_{v,\tau}(\alpha))$. 
	By Claim (C), we have that 
	$$
	F_\tau(\min\{c_w(\beta,\alpha),c_w(\beta,h_{v,\tau}(\alpha))\}) < 
	\max\{c_w(\beta,\alpha),c_w(\beta,h_{v,\tau}(\alpha))\}.
	$$
	Now $v \res \tau$ forces (in $\p_\tau)$ 
	that $\dot f_\tau \res \zeta_1 = F_\tau$, and hence 
	so does $w \res \tau$. 
	Also, $w \res 0$ forces (in $\p_0$, and hence in $\p_\tau$) 
	that $\dot{\pi}_\beta \res \dom(c_{w,\beta}) = c_{w,\beta}$. 
	So 
	$$
	w \res \tau \Vdash_\tau 
	\dot f_\tau(\min\{\dot{\pi}_\beta(\alpha),\dot{\pi}_\beta(h_{w,\tau}(\alpha))\}) < 
	\max \{ \dot{\pi}_\beta(\alpha), \dot{\pi}_\beta(h_{w,\tau}(\alpha)) \}.
	$$
	which proves (i). 

	Now consider $\gamma \in \dom(h_{w,\tau}) \cap \beta$ which is different from $\alpha$. 
	First, assume that $\gamma \in \dom(h_{r,\tau})$. 
	Since $\beta \in I_{r,\tau}$, $h_{r,\tau}(\gamma) < \beta$. 
	Since $r \in E_\delta$, by Definition \ref{definition of E_delta}(2) 
	both $\gamma$ and $h_{r,\tau}(\gamma)$ are in $\dom(c_{r,\beta})$. 
	By statement (4) above, both $c_{r,\beta}(\gamma)$ and $c_{r,\beta}(h_{r,\tau}(\gamma))$ 
	are less than $\zeta$. 
	By statement (18c) above, 
	$F_\tau(c_{r,\beta}(\gamma))$ and $F_\tau(c_{r,\beta}(h_{r,\tau}(\gamma)))$ 
	are less than $\zeta_0$. 
	On the other hand, $\alpha$ and $h_{v,\tau}(\alpha)$ are in the domain of $C_\beta$ 
	and consequently $F_\tau(\alpha)$ and $F(h_{v,\tau}(\alpha))$ are both 
	greater than or equal to $\zeta_0$. 
	So $w \res \tau$ forces (in $\p_\tau$) 
	that 
	$$
	\dot{f}_\tau(\min \{ \dot{\pi}_\beta(h_{w,\tau}(\gamma)),\dot{\pi}_\beta(h_{w,\tau}(\alpha)\}) 
	= \dot{f}_\tau(\dot{\pi}_\beta(h_{w,\tau}(\gamma)))
	$$
	is strictly less than 
	$$
	\dot{\pi}_\beta(h_{w,\tau}(\alpha)) = 
	\max\{\dot{\pi}_\beta(h_{w,\tau}(\gamma)),\dot{\pi}_\beta(h_{w,\tau}(\alpha))\}.
	$$
	
	Secondly, assume that $\gamma \in \dom(h_{w,\tau}) \setminus \dom(h_{r,\tau})$. 
	Then $\gamma \in \dom(h_{v,\tau})$. 
	If $\gamma \in \dom(c_{r,\beta})$, then since $r \in E_\delta$ by 
	Definition \ref{definition of E_delta}(1) it follows that 
	$\gamma \in \dom(h_{r,\tau})$, which is false. 
	So $\gamma \in \dom(C_\beta)$. 
	By Claim (C), 
	$$
	F_\tau(\min \{ C_\beta(h_{v,\tau}(\gamma)), C_\beta(h_{v,\tau}(\alpha)) \}) < 
	\max \{ C_\beta(h_{v,\tau}(\gamma)), C_\beta(h_{v,\tau}(\alpha)) \}.
	$$
	So 
	$$
	w \res \tau \Vdash_\tau 
	\dot{f}_\tau(\min \{ \dot{\pi}_\beta(h_{w,\tau}(\gamma)), 
	\dot{\pi}_\beta(h_{w,\tau}(\alpha)) \}) < 
	\max \{ \dot{\pi}_\beta(h_{w,\tau}(\gamma)), 
	\dot{\pi}_\beta(h_{w,\tau}(\alpha)) \}.
	$$
	The completes the proof of Claim J.
	
	\bigskip
	
	Claims (D), (E), (F), and (J) confirms 
	that $w \le_\delta r$ and completes the proof of the proposition.
	\end{proof}

In light of the results of this article, 
a natural idea for proving the consistency of $(**)$ would be to interleave 
the fast club forcing into the iteration defined above. 
At first glance, this approach seems feasible 
due to the fast club forcing being $\omega_1$-closed and $2^\omega$-centered. 
Unfortunately, we are unable to prove that such an iteration preserves $\omega_2$.

\section{Acknowledgements}

Part of this work was completed while the author was 
visiting Justin Moore at Cornell University 
in February 2026. 
The author thanks Justin Moore for funding his visit, for his hospitality, and for 
extensive discussions concerning the topics of this article. 
The author also thanks Hannes Jakob and Jing Zhang for providing feedback on an
earlier draft.

\providecommand{\bysame}{\leavevmode\hbox to3em{\hrulefill}\thinspace}
\providecommand{\MR}{\relax\ifhmode\unskip\space\fi MR }
\providecommand{\MRhref}[2]{%
  \href{http://www.ams.org/mathscinet-getitem?mr=#1}{#2}
}
\providecommand{\href}[2]{#2}


\end{document}